\renewcommand{\div}{\operatorname{div}}
\newcommand{\Rr}{{\mathbb{R}}}
\newcommand{\Nn}{{\mathbb{N}}} 
\newcommand{\Tt}{{\mathbb{T}}}
\newcommand{\Fff}{{\mathcal{F}}}
\newtheorem{teo}{Theorem}
\newtheorem{df}{Definition}
\newtheorem{cor}{Corollary}
\newtheorem{lemma}{Lemma}
\newtheorem{pro}{Proposition}
\newtheorem{hyp}{Assumption}
\newtheorem{rem}{Remark}
\newcommand{\td}{\mathbb{T}^d}
\newcommand{\rd}{\mathbb{R}^d}
\begin{document}

\title{Short-time existence of solutions for mean-field games with congestion}

\author{Diogo A. Gomes}
\address[D. A. Gomes]{
	King Abdullah University of Science and Technology (KAUST), CSMSE Division , Thuwal 23955-6900. Saudi Arabia, and  
	KAUST SRI, Uncertainty Quantification Center in Computational Science and Engineering.}
\email{diogo.gomes@kaust.edu.sa}
\author{Vardan K. Voskanyan}
\address[V. Voskanyan]{
King Abdullah University of Science and Technology (KAUST), CSMSE Division , Thuwal 23955-6900. Saudi Arabia, and  
KAUST SRI, Uncertainty Quantification Center in Computational Science and Engineering.}
\email{vardan.voskanyan@kaust.edu.sa}

\keywords{Mean Field Game; Congestion models}
\subjclass[2010]{
	35J47, 
	35A01} 

\thanks{
	D. Gomes and V. Voskanyan were partially supported by KAUST baseline and start-up funds and 
	KAUST SRI, Center for Uncertainty Quantification in Computational Science and Engineering.  
}
\date{\today}

\maketitle

\begin{abstract}
We consider time-dependent mean-field games with congestion that are given by a system of a Hamilton-Jacobi
equation coupled with a Fokker-Planck equation. 
The congestion effects make the Hamilton-Jacobi equation singular. These models are motivated by crowd dynamics where agents have difficulty moving in high-density areas. 
Uniqueness of classical solutions for
this problem is well understood. However, existence of classical solutions,  was only known in very special cases - stationary problems with quadratic Hamiltonians and some time-dependent explicit examples.
Here, we prove short-time existence of $C^\infty$ solutions in the case of sub-quadratic Hamiltonians. 
\end{abstract}

\section{Introduction}
\label{intro}

Here, we study the time-dependent mean-field games with congestion given by the system
 \begin{equation}
\label{maineq}
\begin{cases}
-u_t-\Delta u+m^{\alpha}H_0\left(x, \frac{Du}{m^{\alpha}}\right)+b\cdot Du=V(x,m(x,t)),\\
m_t-\Delta m-\div(D_pH_0\left(x, \frac{Du}{m^{\alpha}}\right)m)-\div(bm)=0,\\
u(x, T)=\Psi(x),\ m(x, 0)=m_0(x).
\end{cases}
\end{equation}
Because we work in the spatially periodic setting, the variable $x$ takes values on the $d$-dimensional torus $\td$. The unknowns in \eqref{maineq} are  the functions $u\colon \td\times[0,T]\to\Rr$, and $m\colon \td\times[0,T]\to\Rr^+$. The functions
$H_0:\Tt^d\times \Rr^d\to\Rr$, 
$V:\Tt^d\times \Rr\to \Rr$, and 
$b, \Psi, m_0\colon \td\to\Rr$ are given $C^\infty$ functions with $m_0>0$. Moreover, $V(x,m)$ is increasing in $m$. 
Detailed hypotheses on  $H_0, b, V, \Psi$ and $ m_0$ are presented in Section \ref{assamp}.
A concrete Hamiltonian $H_0$ for which our results apply is the following:
for $\gamma\in(1,2)$, set $\frac{1}{\gamma}+\frac{1}{\gamma'}=1$.
Consider the  Lagrangian:  
\begin{equation}
\label{l0e}
L_0(x,v)=a(x)(1+|v|^2)^{\frac {\gamma'} {2} },
\end{equation}
where $a\in \mathcal{C}^{\infty}(\td),\ a> 0$.
Define $H_0$ as the Legendre transform of $L_0$:
\begin{equation}
\label{h0e}
 H_0(x,p)=\sup_v [-v\cdot p- L_0(x,v)]. 
\end{equation}

The uniqueness of solutions to \eqref{maineq}
was proven in \cite{LCDF} (see also \cite{GueU}) under Assumptions
\ref{uniq1}-\ref{uniq3} of Section \ref{assamp}.
Here, we prove the existence of smooth solutions for small terminal times
and sub-quadratic Hamiltonians:
\begin{teo}\label{main}
Under Assumptions \ref{psi}-\ref{uniq3}, (cf. Section \ref{assamp}), there exists a time $T_0>0$ such that for any terminal time $T\leq T_0$ there exists a $C^\infty$ solution $(u, m)$ to \eqref{maineq}, with $m>0$.
\end{teo}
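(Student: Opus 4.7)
The plan is to construct the solution by a Banach fixed-point argument in parabolic Hölder spaces, exploiting that the singularity $m^{-\alpha}$ is harmless as long as $m$ remains uniformly bounded below, a property guaranteed on short time intervals by continuity from the strictly positive datum $m_0$. Fix $\beta\in(0,1)$ and set $X=C^{2+\beta,1+\beta/2}(\td\times[0,T])$. I would work on a closed ball $K\subset X\times X$ centered at a reference pair $(\bar u,\bar m)$, where $\bar m(x,t)\equiv m_0(x)$ and $\bar u$ solves the backward heat equation with terminal data $\Psi$. The radius of $K$ would be chosen so that every $\tilde m$ in the $\tilde m$-component of $K$ satisfies $\tilde m\ge \tfrac{1}{2}\min_x m_0(x)>0$, which keeps the nonlinearities away from the singular set.

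Next I would define a map $\Phi(\tilde u,\tilde m)=(u,m)$ in two decoupled steps. First, given $\tilde m$, solve the semilinear backward equation
\[
-u_t-\Delta u+\tilde m^\alpha H_0\bigl(x,Du/\tilde m^\alpha\bigr)+b\cdot Du=V(x,\tilde m),\qquad u(\cdot,T)=\Psi;
\]
the subquadratic growth $\gamma<2$ provides the a-priori gradient control needed for short-time parabolic Schauder solvability in $X$. Second, given $u$ and $\tilde m$, solve the linear forward Fokker--Planck equation
\[
m_t-\Delta m-\div\bigl(D_pH_0(x,Du/\tilde m^\alpha)\,m\bigr)-\div(bm)=0,\qquad m(\cdot,0)=m_0,
\]
for which positivity follows from the maximum principle and linear parabolic Schauder theory yields $m\in X$.

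The two remaining technical steps are to verify (i) invariance $\Phi(K)\subset K$, which amounts to quantitative Schauder estimates showing that $\|u-\bar u\|_X$ and $\|m-\bar m\|_X$ are $O(T^\mu)$ for some $\mu>0$, using the smoothness of the frozen nonlinearity on $K$ and the smallness of the time interval; and (ii) contractivity of $\Phi$ on $K$ with respect to a weaker norm such as $C^{1+\beta/2,(1+\beta)/4}$, after which a standard interpolation-plus-completeness argument produces a fixed point $(u,m)\in K$. Once a $C^{2+\beta,1+\beta/2}$ solution is in hand, the final step is a routine bootstrap: since all data are $C^\infty$ and $m$ stays strictly positive, differentiating the coupled system and iterating parabolic Schauder estimates upgrades $(u,m)$ to $C^\infty$.

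I expect the main obstacle to be step (ii), the contraction estimate. Differentiating the nonlinearity $\tilde m^\alpha H_0(x,p/\tilde m^\alpha)$ with respect to $\tilde m$ produces terms involving $\tilde m^{\alpha-1}$ and $p\cdot\tilde m^{-1}$, so the Lipschitz dependence of $\Phi$ on its arguments is controlled only by combining the uniform lower bound $\tilde m\ge \tfrac12\min m_0$ with the uniform bound on $Du$ available on $K$. Shortness of $T$ would then be used in two places: to preserve the lower bound on $m$ via continuity, and to absorb the Lipschitz constants of the nonlinear map through factors that vanish as $T\to 0$. The subquadratic assumption is essential on the Hamilton--Jacobi side of the argument, since it rules out the gradient blow-up phenomena that would otherwise obstruct the Schauder estimates underlying step (i).
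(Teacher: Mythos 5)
Your argument takes a genuinely different route from the paper, and in outline it is a viable one. The paper does not iterate on the decoupled system: it first derives a priori estimates valid for \emph{any} $C^\infty$ solution of the coupled system (the decisive step being the control of $\int_{\td} m^{-r}\,dx$ in Proposition \ref{D1mp} and Theorem \ref{mpshort}, which is where the restriction to short time enters, followed by the bootstrap of Theorem \ref{bootstrapp}), and then obtains existence by the continuation method: the family \eqref{maineqlambda} connects \eqref{maineq} to a problem with an explicit solution, closedness of the solvable parameter set follows from the a priori bounds and Arzel\`a--Ascoli, and openness follows from the inverse function theorem, which in turn requires inverting the linearized operator $\mathcal{L}_{\lambda}$ via a Galerkin scheme that exploits the Lasry--Lions monotonicity structure (Assumptions \ref{uniq1}--\ref{uniq3}). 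Your contraction scheme only ever needs estimates for the two \emph{frozen} problems, keeps $m$ away from zero by construction rather than by estimate, and would not use the monotonicity assumptions at all; it is therefore more elementary and closer in spirit to the short-time arguments used for forward--backward systems elsewhere in the MFG literature. What the paper's approach buys in exchange is substantial: several of its estimates (Sections \ref{estim}--\ref{srvf}) hold for arbitrary $T$ and quantify precisely how the congestion singularity is controlled, and the invertibility of $\mathcal{L}_{\lambda}$ is a reusable piece of structure rather than a one-shot smallness argument.

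Two points in your sketch need more care. First, you cannot choose the radius of $K$ in the $C^{2+\beta,1+\beta/2}$ norm small enough to force $\tilde m\ge\tfrac12\min_x m_0$ and at the same time have $\Phi(K)\subset K$: parabolic Schauder estimates do not gain a power of $T$ in the top-order norm, so $\|m-\bar m\|_X$ is merely bounded in terms of the data, not $O(T^{\mu})$. The standard repair is to take $K$ of fixed (possibly large) radius in $X$ and impose the lower bound on $\tilde m$ through a separate constraint on $\|\tilde m-m_0\|_{L^\infty}$, which \emph{is} $O(T^{1/2})$ for the output of the Fokker--Planck solve; the bookkeeping that makes the $X$-radius self-consistent for small $T$ is routine but must be done. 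Second, the role you assign to $\gamma<2$ is not quite where the hypothesis bites: for short time with smooth frozen coefficients even quadratic Hamiltonians produce no gradient blow-up, whereas in the paper subquadraticity is used in the integral estimates of Proposition \ref{D1mp} and Lemma \ref{uLrp} that control $1/m$ and $D^2u$. This is a misattribution rather than a gap in your argument.
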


Mean-field games have become 
an important research field since the seminal works of J-M. Lasry and P-L. Lions \cite{ll1, ll2, ll3}, and M. Huang, P. Caines and R. Malham\'e \cite{Caines1, Caines2}.
Diverse questions have been studied intensively,   
these include stationary mean-field games \cite{GM, GPM1, GPatVrt}, classical and weak solutions
for time-dependent problems, see, respectively, \cite{GPM2, GPM3, GPim1, GPim2} and  \cite{porretta, porretta2, cgbt}, 
finite state models \cite{GMS, GMS2, gomes2011, GF, GVW-Socio-Economic, GVW-dual}, 
extended mean-field games \cite{GVrt}, and obstacle problems \cite{GPat}. 
For a recent survey, see \cite{GS}.
Congestion problems were addressed initially by P-L. Lions \cite{LCDF}, who proved the uniqueness of smooth enough solutions.
Two alternative approaches to congestion problems are density constraints, 
introduced in \cite{San12, FrS}, and nonlinear mobilities, see \cite{BDFMW14}.
A recent existence result, see \cite{GMit}, regards the stationary congestion problem with quadratic costs. The existence of solutions in the time-dependent setting has not been established previously. 

Before proceeding, we briefly discuss  the motivation for \eqref{maineq}.
We consider a large population of agents on $\mathbb{T}^d$, whose statistical evolution over time is encoded in  an unknown probability density $m(x,t)$.  
Let  $(\Omega, \Fff_t, P)$ be a filtered probability space supporting a 
 $d$-dimensional Brownian motion 
  $W_t$. Let $\mathbb{E}$ be the expected value operator.
  Consider an agent whose location at time $t$ is  $x$.
The cost function 
for this agent, sometimes called value or utility function,
is
\[
u(x,t)=\inf\limits_v\mathbb{E}\int\limits_t^TL(X_s,v_s,m(X_s,s),s)ds+\Psi(X_T),
\]
where the trajectory $X$ controlled by the  dynamics
\[
dX_s=v_sds+\sqrt{2}dW_s,\ X_0=x,
\]
and the infimum is taken over bounded $\Fff_t$-progressively measurable
controls $v_s$. 
Here, $\Psi\colon\td\to\Rr$ is the terminal cost. The Lagrangian $L$ has the form
$$L(x,v,m,t)=m^{\alpha}L_0(x,v-b(x,t))+V(x, m).$$
Detailed assumptions on $L_0$ are given in the next Section. The constant $\alpha$ determines the strength of the congestion effects. These are encoded in the term $m^{\alpha}L_0(x,v-b(x,t))$ that makes it more expensive to move in regions of high density if the drift $v$ is substantially different from a reference vector field $b\colon \mathbb{T}^d\times [0,T]\to \mathbb{R}^d$. Finally, the function $V\colon \mathbb{T}^d\times \mathbb{R}^+_0\to \mathbb{R}$ accounts for other spatial preferences of the agents.

The Hamiltonian is the Legendre transform of $L$,
 given by
\[
H(x,p,m,t)=\sup\limits_v \{ -v\cdot p- L(x,v,m,t) \}=m^{\alpha}H_0\left(x, \frac{p}{m^{\alpha}}\right)+b(x,t)\cdot p,
\]
where $H_0$ is the Legendre transform of $L_0$.
Under standard assumptions regarding rationality and symmetry, the mean-field problem that models this setup is \eqref{maineq}. It comprises a system of
a second-order Hamilton-Jacobi equation for the value function $u$ coupled with a Kolmogorov-Fokker-Planck equation for
the density of agents $m$.

We conclude this introduction with the structure of the paper:
in Section \ref{assamp}, we state the main assumptions used in this manuscript. Afterwards, in Section \ref{estim},
we discuss various estimates that hold for arbitrary values of
the terminal time $T$. Then, in Section \ref{shortestim}, we present a new technique 
to address the short-time problem by controlling the growth of $\frac 1 m$.
Next, in Section \ref{srvf},
we establish further regularity for the solutions. 
Section \ref{exist} concludes the paper with
the proof of Theorem \ref{main}.  

\section{Assumptions}
\label{assamp}

Throughout the present manuscript, we work under several hypotheses that we state next.
Assumptions \ref{psi} and \ref{m0} concern the smoothness of the initial and terminal data, and the various functions in \eqref{maineq}. 
Here, we work with $C^\infty$ data to simplify the arguments. However, it would be possible to carry out the proofs with less regularity, and obtain 
the existence of solutions with $C^k$ regularity for $k$ large enough. 
Assumptions \ref{L0conv}-\ref{dhp} and \ref{uniq1}
are standard hypotheses in optimal control, viscosity 
solutions, and mean-field games.
They are stated explicitly for the convenience and clarity of the paper and do not result in a substantial loss of generality. 
A model Hamiltonian that satisfies those is 
\eqref{h0e}.
Assumptions \ref{alf} and \ref{H0sub}
are specific to the present problem and impose, respectively, 
a bound on the congestion exponent and 
 subquadratic growth for the Hamiltonian.
Subquadratic Hamiltonians correspond to
superquadratic Lagrangians. In the example \eqref{h0e},
this is reflected in
the condition
$\frac 1 \gamma+\frac 1 {\gamma'}=1$ satisfied by the exponent in 
\eqref{l0e}.
 Finally, Assumptions \ref{uniq2}
 and \ref{uniq3} are required for the uniqueness, see \cite{LCDF}.

\begin{hyp}
\label{psi} The terminal cost $\Psi\colon \mathbb{T}^d\to \Rr$, the reference velocity $b\colon \mathbb{T}^d\times [0,T]\to \Rr^d$ and the potential  $V\colon \mathbb{T}^d\times \Rr\to \Rr$ are $C^{\infty}$ functions, globally bounded with bounded derivatives of all orders.
\end{hyp}

\begin{hyp} \label{m0}
	The initial distribution
 $m_0\colon \mathbb{T}^d\to \Rr$ is a $C^{\infty}$ probability density: $\int_{\mathbb{T}^d}m_0(x)dx=1$. 
 Moreover, there exists $k_0>0,$ such that $m_0(x)\geq k_0$ for all $x\in\mathbb{T}^d.$
\end{hyp}

\begin{hyp}\label{L0conv}  The Lagrangian $L_0\colon \mathbb{T}\times \Rr^d$ is $C^\infty$, and the map
$$v\mapsto L_0(x, v)$$
is strictly convex for every $x\in\mathbb{T}^d.$
\end{hyp}

\begin{hyp} \label{L0pos}
  $L_0$ is positive: $L_0(x,v)\geq 0,\, \forall (x,v)\in \td\times\rd.$
\end{hyp}

\begin{hyp}\label{L0sup}  There exists conjugated powers $\gamma, \gamma'>1$, $\frac{1}{\gamma}+\frac{1}{\gamma'}=1$, and  constants $C_i,c_i>0,\ i=1,2$ such that
\[
C_1\frac{|v|^{\gamma'}}{\gamma'}-c_1
\leq L_0(x,v)\leq C\frac{|v|^{\gamma'}}{\gamma'}+c,\ \forall x\in\td, v\in\rd.
\]
\end{hyp}

\begin{rem}\label{rem1}
The definition of Legendre transform implies the convexity of $H_0$. Thus, we have
\begin{equation}\label{h0b}
H_0(x,p)-p\cdot D_pH_0(x,p)\leq H_0(x,0)=\sup\limits_v \big\{-L_0(x,v)\big\}\leq 0,
\end{equation}
using Assumption \ref{L0pos}.
\end{rem}

\begin{rem}\label{h0conv}
Under Assumption \ref{L0conv}, the Hamiltonian $H_0\colon\td\times\rd\to\Rr$ is $C^\infty$.
\end{rem}

\begin{rem}
\label{rem3}
Let $H_0(x,p)=\sup\limits_v \big\{-p\cdot v-L_0(x,v)\big\}$ be the Legendre transform of $L_0.$ Then, the Assumptions \ref{L0conv}-\ref{L0sup} imply
\[
C'_1\frac{|p|^{\gamma}}{\gamma}-c'_1\leq H_0(x,p)\leq C'_2\frac{|p|^{\gamma}}{\gamma}+c'_2,\quad \forall x\in\td, p\in\rd.
\]
\end{rem}

\begin{hyp}\label{H*0}  There exist positive constants $c,C>0$ such that 
\[
p\cdot D_pH_0(x,p)-H_0(x,p)\geq c |p|^{\gamma}-C.
\]
\end{hyp}

\begin{hyp}
	\label{dhp} There exists a constant $C$ such that
\[|D_pH_0(x,p)|\leq C |p|^{\gamma-1}+C.\]
\end{hyp}
\begin{rem}
\label{rem4}
By combining Remark \ref{rem3} with the Assumption \ref{H*0},
we conclude that there exist positive constants $c,C$ such that, for any $r>1$,
\[
c|p|^{\gamma}\leq H_0(x,p)+rp\cdot D_pH_0(x,p)+Cr.
\]
\end{rem}

\begin{hyp}\label{alf} 
The exponent $\alpha$ in the congestion term ($m^{\alpha}$) satisfies the inequality  $0\leq\alpha<\frac 2 {d-2}$.
\end{hyp}

\begin{hyp}\label{H0sub}
$H_0$ has sub-quadratic growth, i.e.  $\gamma<2.$ 
\end{hyp}

The next three assumptions are required for the uniqueness of solutions. 
\begin{hyp}
	\label{uniq1} The
	Hamiltonian $H_0$ is $C^\infty$, and the map
	$$p\mapsto H_0(x, p)$$ is strictly convex for every $x\in\mathbb{T}^d$, 
	that is, 
	$
	D^2_{pp}H_0(x,p)>0,\, \forall (x,p)\in\td\times\rd.
	$
\end{hyp}

\begin{rem}
	The previous Assumption implies that $H_0$ is uniformly convex on compacts, i.e., for any $R>0$, there exists $\theta_R>0$ such that
	$
	D^2_{pp}H_0(x,p)\geq\theta_R I,\, \forall (x,p)\in\td\times\rd \text{, with } |p|\leq R.
	$
\end{rem}

\begin{hyp}\label{uniq2} For $p\neq 0$, the following inequality holds:
	\[
	D_pH_0(x,p)\cdot p-H_0(x,p)> \frac{\alpha}{4}\ p^t\cdot D_{pp}H_0(x,p)\cdot p.
	\]
\end{hyp}

\begin{hyp}\label{uniq3}
	The potential $V\colon \td\times \Rr \to \Rr$ is strictly
	increasing in the second variable.
\end{hyp}

For the Hamiltonian $H_0$ given by \eqref{h0e},
Assumption \ref{uniq1} is satisfied for every $\gamma>1$.
A simple calculation shows that Assumption \ref{uniq2} holds if $\alpha<\frac{4(\gamma'-1)}{\gamma'}=\frac{4}{\gamma}$.
A potential $V$ for which 
Assumption \ref{uniq3} is valid is  $V(x,z)=\arctan(z)$.

\section{Estimates for arbitrary terminal time}
\label{estim}

The main result of this paper is the existence of smooth solutions to \eqref{maineq} for small terminal time $T$. Nevertheless, various estimates we need are valid for arbitrary $T$. We report those in this section. 

We begin with an auxiliary Lemma
\begin{lemma}\label{heat}
	For $0\leq \tau\leq T$, 
	and $\phi\in C^{\infty}(\mathbb{T}^d), \phi\geq 0$ with $\|\phi\|_{L^1(\mathbb{T}^d)}\leq 1$,
let $\rho$ be the solution to 
\begin{equation}
\label{heq}
\begin{cases}
\rho_t=\Delta \rho,\\
\rho(x, \tau)=\phi(x).
\end{cases}
\end{equation}
Denote by $2^*$ the Sobolev conjugate exponent of $2$, given by $\frac{1}{2^*}=\frac{1}{2}-\frac{1}{d}$. Then, for any $q$ with $1<q<\frac{2^*}{2}$, there exists a constant $C_q$ such that
\[
\|\rho\|_{L^1(L^q(dx), dt)}=\int\limits_{\tau}^T\left(\int_{\mathbb{T}^d}\rho^q dx\right)^{\frac{1}{q}}dt\leq C_q.
\]
\end{lemma}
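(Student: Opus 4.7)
My plan is to read the bound off directly from the heat semigroup's smoothing property applied to the $L^1$ datum $\phi$. Representing $\rho(\cdot,t) = G(\cdot, t-\tau) * \phi$, where $G$ denotes the heat kernel on $\td$, Young's convolution inequality together with $\|\phi\|_{L^1}\le 1$ yields
\[
\|\rho(\cdot,t)\|_{L^q(\td)} \;\le\; \|G(\cdot,\,t-\tau)\|_{L^q(\td)}\,\|\phi\|_{L^1(\td)} \;\le\; \|G(\cdot,\,t-\tau)\|_{L^q(\td)}.
\]
It therefore suffices to control the time integral of $\|G(\cdot,s)\|_{L^q}$ for $s=t-\tau\in(0,T]$.

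For small $s$, periodization of the Euclidean Gaussian (or a direct estimate via the Fourier series on $\td$) yields $\|G(\cdot,s)\|_{L^q}\le C_q\,s^{-\beta}$ with $\beta=\tfrac{d}{2}(1-1/q)$; for $s$ bounded away from $0$, $G(\cdot,s)$ is smooth and converges uniformly to its spatial mean, so $\|G(\cdot,s)\|_{L^q}$ is bounded by a universal constant depending only on $d$ and $q$. The role of the hypothesis $q<2^*/2=d/(d-2)$ is now transparent: it rearranges to $1/q>(d-2)/d$, equivalently $\beta<1$, which is precisely what makes $s^{-\beta}$ integrable at the origin.

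I complete the argument by splitting
\[
\int_\tau^T\|\rho(\cdot,t)\|_{L^q}\,dt \;\le\; \int_\tau^{\tau+\min(1,\,T-\tau)} C_q\,(t-\tau)^{-\beta}\,dt \;+\; \int_{\tau+1}^T C_q\,dt,
\]
with the second term understood to vanish when $T\le\tau+1$. Because $\beta<1$ the first piece is bounded by $C_q/(1-\beta)$, while the second is bounded by $C_q\,T$; the sum is a constant depending only on $q$, $d$, and $T$. I expect no real obstacle: the argument is a textbook use of parabolic smoothing and the exponent identity is elementary. The only point requiring mild care is the torus version of the heat-kernel $L^q$ estimate for short times, which is classical and follows by dominating $G$ on $\td$ by the Euclidean Gaussian on the scale $s\le 1$.
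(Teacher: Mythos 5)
Your argument is correct, but it takes a genuinely different route from the paper. You read the estimate off the heat semigroup's $L^1\to L^q$ smoothing: $\rho(\cdot,t)=G(\cdot,t-\tau)*\phi$, Young's inequality, and the kernel bound $\|G(\cdot,s)\|_{L^q(\td)}\leq C_q s^{-\beta}$ with $\beta=\tfrac d2(1-\tfrac1q)$, so that $q<\tfrac{2^*}{2}$ is exactly the integrability condition $\beta<1$ at $s=0$. The paper instead runs a purely variational argument: it multiplies the equation by $\rho^{\delta-1}$ with $0<\delta<1$ to obtain $\int_\tau^T\int_{\td}|D(\rho^{\delta/2})|^2\,dx\,dt\leq C_\delta$, applies the Gagliardo--Nirenberg inequality to convert this into a bound on $\|\rho\|_{L^1(L^{\delta p_\delta/2}(dx),dt)}$, and lets $\delta\to 1$ so that the attainable exponent tends to $\tfrac{2^*}{2}$; the same threshold thus appears as a Sobolev-exponent limit rather than as a time-integrability condition. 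Your approach is shorter and yields the strictly stronger pointwise-in-time decay $\|\rho(\cdot,t)\|_{L^q}\lesssim (t-\tau)^{-\beta}$, at the cost of invoking the explicit kernel (and the short-time periodization estimate on $\td$, which, as you note, is classical). The paper's energy method never uses the fundamental solution and would survive the addition of bounded divergence-form drifts, which is in the spirit of the adjoint equations used elsewhere in the paper, though for the present lemma --- applied only to the pure heat equation --- either argument suffices.
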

\begin{proof}
By the maximum principle, $\rho\geq 0$. Furthermore, $\frac{d}{dt}\int_{\mathbb{T}^d}\rho(x,t)dx=0$. In particular, for any $t\geq\tau$
$\|\rho(\cdot,t)\|_{L^1(\mathbb{T}^d)}=\|\phi\|_{L^1(\mathbb{T}^d)}\leq 1.$ Multiplying the heat 
equation \eqref{heq} by $\rho^{\delta-1}$, for $0<\delta<1$, and integrating by parts, we get
\begin{equation}\label{drho}
c_{\delta}\int_{\tau}^T\int_{\mathbb{T}^d}|D(\rho^{\frac{\delta}{2}})|^2dxdt=
\frac{1}{\delta}\int_{\mathbb{T}^d}(\rho^{\delta}(x,T)-\rho^{\delta}(x,\tau))dx\leq \frac 1 {\delta},
\end{equation}
for any $\varepsilon>0,$ where $c_{\delta}=\frac{4(1-\delta)}{\delta^2}$.
 Here, we used Jensen's inequality to obtain
\[
0\leq\int_{\mathbb{T}^d}\rho^{\delta}(x,\tau)dx, \quad \int_{\mathbb{T}^d}\rho^{\delta}(x,T)dx\leq 1.
\]
From the Gagliardo-Nirenberg inequality,
\[
\|\rho^\frac{\delta }{2}(\cdot,t)\|_{L^{p_{\delta}}(\mathbb{T}^d)}\leq C \|D(\rho^{\frac{\delta}{2}})(\cdot,t)\|_{L^{2}(\mathbb{T}^d)}^{\delta} \|\rho^{\frac{\delta}{2}}(\cdot,t)\|_{L^{2}(\mathbb{T}^d)}^{1-\delta},
\]
where $\frac 1 {p_{\delta}}=(\frac 1 2- \frac 1 d)\delta+\frac{1-\delta}{2}$. Since $\|\rho^{\frac{\delta}{2}}(\cdot,t)\|_{L^{2}(\mathbb{T}^d)}\leq\|\rho(\cdot,t)\|_{L^{1}(\mathbb{T}^d)}^ \frac{\delta}{2}\leq 1,$ we have  $$\|\rho(\cdot,t)\|_{L^{\frac{\delta p_{\delta} }{2}}(\mathbb{T}^d)}\leq C \|D(\rho^{\frac{\delta}{2}})(\cdot,t)\|^2_{L^{2}(\mathbb{T}^d)}.$$ 
Finally, integrating the previous estimate in time and using \eqref{drho}, we obtain $\|\rho\|_{L^1(L^{\frac{\delta p_{\delta} }{2}}(dx), dt)}\leq C_{\delta}$. To end the proof, we observe that $p_{\delta} \to 2^*$ when $\delta\to 1$.
\end{proof}

\begin{pro}\label{ubound}
Under Assumptions \ref{psi}-\ref{L0pos}, there exists a constant $C:=C(\|V\|_{\infty}, \|\psi\|_{\infty},T)$ such that for any $C^\infty$ solution $(u, m)$ of \eqref{maineq}, we have  $\|m(\cdot, t)\|_{L^1(\mathbb{T}^d)}=1,\   \forall 0\leq t\leq T$, and, additionally,
$u\geq -C$.
\end{pro}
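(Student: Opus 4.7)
The proposition has two independent claims, and I would handle them separately.

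For the $L^1$ identity, I would integrate the Fokker--Planck equation in \eqref{maineq} over $\mathbb{T}^d$. All spatial terms are either a Laplacian or a divergence of a smooth periodic vector field and therefore integrate to zero, so the total mass is time-invariant; Assumption \ref{m0} then gives $\|m(\cdot,t)\|_{L^1(\mathbb{T}^d)} = 1$ for every $t\in[0,T]$.

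For the lower bound on $u$, the plan is a comparison argument against the spatially constant, affine-in-$t$ function $\phi(x,t) := -A(T-t) - B$, where $A > \|V\|_\infty$ is arbitrary and $B := \|\Psi\|_\infty$. Since $D\phi \equiv 0$, $\Delta\phi \equiv 0$, and Assumption \ref{L0pos} together with Remark \ref{rem1} yields $H_0(x,0) = \sup_v\{-L_0(x,v)\} \leq 0$, one checks that
\[
-\phi_t - \Delta\phi + m^\alpha H_0\!\left(x, \tfrac{D\phi}{m^\alpha}\right) + b\cdot D\phi \;\leq\; -A \;<\; -\|V\|_\infty \;\leq\; V(x,m(x,t)),
\]
so $\phi$ is a strict subsolution of the HJB equation, and the terminal condition $\phi(x,T) = -B \leq \Psi(x)$ also holds.

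To deduce $u \geq \phi$, I would inspect the minimum of $w := u - \phi$ on the compact cylinder $\mathbb{T}^d \times [0,T]$. If this minimum is attained at $t = T$, the terminal inequality gives $w \geq 0$ directly. Otherwise it is attained at some $(x_0,t_0)$ with $t_0 < T$, and the usual critical-point conditions (with a favorable one-sided time derivative if $t_0 = 0$, equality if $t_0 \in (0,T)$) yield $Du(x_0,t_0) = 0$, $\Delta u(x_0,t_0) \geq 0$, and $u_t(x_0,t_0) \geq A$. Substituting $Du = 0$ into the HJB equation and using $m^\alpha H_0(x_0,0) \leq 0$ would force $V(x_0,m(x_0,t_0)) \leq -A < -\|V\|_\infty$, contradicting the bound on $V$. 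Hence $u \geq \phi$, and sending $A \downarrow \|V\|_\infty$ gives $u(x,t) \geq -\|V\|_\infty(T-t) - \|\Psi\|_\infty$, which is the desired estimate with $C = \|V\|_\infty T + \|\Psi\|_\infty$.

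The only potentially delicate feature of \eqref{maineq} is the singular factor $m^\alpha H_0(\cdot, Du/m^\alpha)$ near $m = 0$, but it plays no role here: the hypothesis supplies a smooth classical solution with $m > 0$, and at the critical point the Hamiltonian is evaluated only at $Du = 0$, where the sign condition provided by Assumption \ref{L0pos} is exactly what closes the argument. There is no genuine obstacle; the whole point is choosing the right elementary subsolution.
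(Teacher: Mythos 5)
Your proof is correct, but it takes a genuinely different route from the paper's. For the mass identity both arguments coincide (integrate the Fokker--Planck equation over $\mathbb{T}^d$). For the lower bound on $u$, the paper uses the nonlinear adjoint method: it introduces $\zeta$ solving the adjoint Fokker--Planck equation \eqref{adjoint} with nonnegative data $\phi$ at time $\tau$, pairs it with the Hamilton--Jacobi equation, and exploits the convexity inequality $H_0(x,p)-p\cdot D_pH_0(x,p)\leq H_0(x,0)\leq 0$ of Remark \ref{rem1} to get $\int u(\cdot,\tau)\phi \geq -\bigl[(T-\tau)\|V\|_\infty+\|\Psi\|_\infty\bigr]\|\phi\|_{L^1}$ for all such $\phi$. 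You instead run a pointwise comparison against the explicit spatially constant subsolution $\phi(x,t)=-A(T-t)-\|\Psi\|_\infty$, $A>\|V\|_\infty$, using that at a spatial critical point $Du=0$ the congestion term reduces to $m^\alpha H_0(x,0)\leq 0$ by Assumption \ref{L0pos}; the one-sided time-derivative treatment at $t_0=0$ and the limit $A\downarrow\|V\|_\infty$ are handled correctly, and both methods yield the same constant $C=T\|V\|_\infty+\|\Psi\|_\infty$. Your argument is more elementary for this particular bound, but note that it is tied to the favorable sign of $H_0(x,0)$ and would not extend to the companion upper bound of Proposition \ref{uuperB}, where the congestion term is only bounded below by $-Cm^\alpha$ and must be controlled in an integrated sense against the heat kernel (Lemma \ref{heat}); the duality setup the paper introduces here is precisely the machinery reused there.
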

\begin{proof}
Integrating the second equation we have $\left(  \int_{\mathbb{T}^d}m(x,t)dx\right)_t=0$. Therefore,  $\int_{\mathbb{T}^d}m(x,t)dx=1$, for all $t\geq 0$. 
To prove the upper bound for $u$,
we apply the nonlinear adjoint method \cite{E3} (for further applications, see also \cite{T1}).
Let $\zeta$ be a  solution to
\begin{equation}
\label{adjoint}
\begin{cases}
\zeta_t-\div(D_pH_0\left(x, \frac{Du}{m^{\alpha}}\right)\zeta)-\div(b\zeta)=\Delta \zeta\\
\zeta(x,\tau)=\phi(x),
\end{cases}
\end{equation}
with $\phi\in C^\infty(\Tt^d)$, $\phi \geq 0$. We multiply the first equation in \eqref{maineq} by $\zeta$
and subtract \eqref {adjoint} multiplied by $u$. Then, we integrate by parts and gather
\[
-\left[\int_{\mathbb{T}^d} u\zeta dx \right]_t+\int_{\mathbb{T}^d}m^{\alpha}\left[H_0\left(x, \frac{Du}{m^{\alpha}}\right)- \frac{Du}{m^{\alpha}}D_pH_0\left(x, \frac{Du}{m^{\alpha}}\right)\right]\zeta dx=\int_{\mathbb{T}^d} V\zeta dx.
\]
Integrating on $[\tau, T]$ and using \eqref{h0b}, we obtain
\[
-\int_{\mathbb{T}^d} \Psi(x)\zeta(x,T)dx+\int_{\mathbb{T}^d} u(x,\tau)\phi(x)dx\geq\int_{\tau}^T\int_{\mathbb{T}^d}V\zeta dxdt.
\]
By the maximum principle, $\zeta \geq 0$, because $\phi\geq 0$. Integrating \eqref{adjoint} with respect to $x,$ we get $\int \zeta dx=\int \phi dx$.  This identity, together with the above inequality, yields
\[
\int_{\mathbb{T}^d} u(x,\tau)\phi(x)dx\geq-[(T-\tau)\|V\|_{\infty}+\|\psi\|_{\infty}]\|\phi\|_{L^1(\mathbb{T}^d)}.
\]
Since this estimate holds for every  $C^\infty$ $\phi \geq 0$, we obtain $-u(x,\tau)\leq (T-\tau)\|V\|_{\infty}+\|\psi\|_{\infty}$.
\end{proof}

\begin{pro}
Under Assumptions \ref{psi}-\ref{H*0},  there exists a constant $C:=C(\|V\|_{\infty}, \|\psi\|_{\infty},T)$ such that for any $C^\infty$ solution  $(u, m)$ to \eqref{maineq}, we have 
\begin{equation}
\label{AA1}
\int_0^T\int_{\mathbb{T}^d}
\frac{|Du|^{\gamma}}{m^{\bar{\alpha}}}
dx dt\leq C,
\end{equation}
and
\begin{equation}
\label{AA2}
\int_{\mathbb{T}^d} |u(x,t) |dx\leq C,\ t\in [0,T],
\end{equation}
where 
\begin{equation}
\label{abar}
\bar{\alpha}=(\gamma-1)\alpha<1.
\end{equation}
\end{pro}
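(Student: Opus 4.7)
The plan is to integrate the Hamilton--Jacobi equation in \eqref{maineq} over $\td$ and exploit the coercive lower bound on $H_0$ from Remark~\ref{rem3} to extract the target integrand $|Du|^{\gamma}/m^{\bar{\alpha}}$. The estimate \eqref{AA2} then follows by a Grönwall closure relying on the pointwise lower bound on $u$ from Proposition~\ref{ubound}, and \eqref{AA1} by integrating the resulting gradient inequality in time.

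Concretely, set $A(t):=\int_{\td} u(x,t)\,dx$. Integrating the first line of \eqref{maineq} over $\td$, the Laplacian term vanishes and integration by parts gives $\int b\cdot Du\,dx=-\int(\div b)\,u\,dx$, so
\begin{equation*}
-\dot A(t) + \int_{\td} m^{\alpha} H_0\!\left(x,\tfrac{Du}{m^{\alpha}}\right)dx = \int_{\td} V(x,m)\,dx + \int_{\td}(\div b)\,u\,dx.
\end{equation*}
The algebraic identity $m^{\alpha}|Du/m^{\alpha}|^{\gamma}=|Du|^{\gamma}/m^{\bar{\alpha}}$, combined with the lower bound $H_0(x,p)\ge c|p|^{\gamma}-C$ of Remark~\ref{rem3}, then yields
\begin{equation*}
c\int_{\td}\frac{|Du|^{\gamma}}{m^{\bar{\alpha}}}\,dx \le \dot A(t) + C\int_{\td} m^{\alpha}\,dx + \|V\|_{\infty} + \|\div b\|_{\infty}\int_{\td}|u|\,dx.
\end{equation*}

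By Proposition~\ref{ubound}, $u(x,t)\ge -C$ and $\|m(\cdot,t)\|_{L^{1}(\td)}=1$; hence $\int_{\td}|u|\,dx\le A(t)+2C$, and Jensen's inequality (valid when $\alpha\le 1$) gives $\int_{\td} m^{\alpha}\,dx\le 1$. Dropping the nonnegative left-hand side of the displayed estimate and substituting these bounds produces the differential inequality $\dot A(t)\ge -CA(t)-C$. Since $A(T)=\int_{\td}\Psi\,dx$ is bounded by Assumption~\ref{psi}, a backward Grönwall argument yields a uniform upper bound $A(t)\le C$ on $[0,T]$; together with the pointwise lower bound $u\ge -C$ this establishes \eqref{AA2}. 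Re-inserting the bounds on $A(t)$, $\int|u|$ and $\int m^{\alpha}$ into the displayed gradient inequality and integrating in $t$ from $0$ to $T$ then yields \eqref{AA1}.

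The main technical point is the control of $\int_{\td} m^{\alpha}\,dx$: for $\alpha\le 1$ Jensen's inequality combined with $\int m\,dx=1$ suffices, but the regime $\alpha>1$ would require finer information on $m$, obtainable through a parabolic energy estimate on $m^{\delta/2}$ in the spirit of Lemma~\ref{heat} adapted to the Fokker--Planck equation with its (rough) drift.
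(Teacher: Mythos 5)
Your argument is correct and rests on the same mechanism as the paper's proof: integrate the Hamilton--Jacobi equation in space, use the superlinear lower bound on $H_0$ from Remark \ref{rem3} to extract $m^{\alpha}|Du/m^{\alpha}|^{\gamma}=|Du|^{\gamma}/m^{\bar{\alpha}}$, and close with the pointwise bound $u\geq -C$ from Proposition \ref{ubound}. The one genuine difference is the treatment of the drift: you integrate $\int_{\td} b\cdot Du\,dx$ by parts to $-\int_{\td}(\div b)\,u\,dx$, which places $\int_{\td}|u|\,dx$ on the right-hand side and forces the backward Gronwall step for $A(t)$. The paper avoids the differential inequality altogether by absorbing the drift into the coercivity --- since $|b\cdot p|\leq \epsilon|p|^{\gamma}+C_{\epsilon}$, Remark \ref{rem3} gives $H_0(x,p)+b\cdot p\geq c|p|^{\gamma}-C$ --- and then integrates the equation over $[t,T]\times\td$ in one shot, so the only $u$-terms appearing are $\int_{\td}\Psi\,dx$ and $-\int_{\td}u(x,t)\,dx\leq C$, and \eqref{AA1}, \eqref{AA2} follow simultaneously. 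Your route is longer by one (harmless) Gronwall argument but otherwise equivalent, and neither route needs $\|\div b\|_\infty$ in the paper's version. Your closing caveat about $\int_{\td}m^{\alpha}\,dx$ is well taken: the paper's proof also silently invokes $\int_t^T\int_{\td}m^{\alpha}\,dx\,ds\leq C$, which follows from $\|m(\cdot,t)\|_{L^{1}(\td)}=1$ and Jensen only when $\alpha\leq 1$, whereas the standing hypotheses ($\bar{\alpha}=(\gamma-1)\alpha<1$ with $\gamma<2$, and Assumption \ref{alf} in dimension $d=3$) do allow $\alpha>1$; so this is a shared limitation of both arguments rather than a defect of yours, and your suggested remedy (an a priori integrability estimate for $m$ from the Fokker--Planck equation) is indeed how the paper later controls $\int_{\td}m^{1+\alpha}\,dx$ in Proposition \ref{uuper}.
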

\begin{proof}
We integrate the first equation in \eqref{maineq} with respect to $x$ and $t$. Then, we use the bounds on $u$ from the previous proposition, to get
\[
\int_t^T \int_{\mathbb{T}^d}  \left(m^{\alpha}H_0\left(x, \frac{Du}{m^{\alpha}}\right)+b\cdot  Du\right) dxds= \int_{\mathbb{T}^d}u(x,T)dx-\int_{\mathbb{T}^d}u(x,t)dx+\int_t^T \int_{\mathbb{T}^d}Vdxds\leq C.
\]
By Remark \ref{rem3}, 
 $|p|^{\gamma}\leq C(H_0(x,p)+b\cdot p)+C$. Accordingly, 
\begin{align*}
&\int_{\mathbb{T}^d}u(x,t)dx+\int_t^T\int_{\mathbb{T}^d}
\frac{|Du|^{\gamma}}{m^{\bar{\alpha}}}
dx dt\\&\qquad \leq C \int_t^T\int_{\mathbb{T}^d}  m^{\alpha}\left(H_0\left(x, \frac{Du}{m^{\alpha}}\right)+b\cdot  Du \right)dx ds+   C\int_t^T\int_{\mathbb{T}^d}  m^{\alpha}dxds \leq C.
\end{align*}
This inequality, combined with 
 the lower bound on $u$ of Proposition \ref{ubound}, yields \eqref{AA1}.
Moreover, since 
\[
\int u \leq C, 
\]
using again the lower bound on $u$ of Proposition \ref{ubound}, 
we obtain \eqref{AA2}.
\end{proof}

\begin{pro}
	\label{CC1}
Under Assumptions \ref{psi}-\ref{H*0},  there exists a constant $C:=C(\|V\|_{\infty}, \|\psi\|_{\infty},T)$ such that for any $C^\infty$ solution $(u, m)$ to \eqref{maineq} we have 
\[
\int_0^t\int_{\mathbb{T}^d}
|Du|^{\gamma}m^{1-\bar{\alpha}}
dx dt\leq C+C\int_0^t\int_{\mathbb{T}^d}m^{1+\alpha}dxdt,
\]
for all $0\leq t\leq T$, 
where $\bar \alpha$ is given by \eqref{abar}.
\end{pro}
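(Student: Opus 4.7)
The plan is to combine the two equations of \eqref{maineq} in the spirit of an energy identity. Specifically, I would multiply the Hamilton-Jacobi equation by $m$, multiply the Fokker-Planck equation by $u$, subtract the second from the first, and integrate in $x$ over $\mathbb{T}^d$. The Laplacian terms combine into $\int_{\mathbb{T}^d}(m\Delta u - u\Delta m)\,dx = 0$ after two integrations by parts; the drift terms $\int mb\cdot Du\,dx$ and $-\int u\,\mathrm{div}(bm)\,dx$ cancel after one integration by parts; and the divergence coming from $D_pH_0$ yields $\int m\,Du\cdot D_pH_0(x,Du/m^\alpha)\,dx$. Collecting terms, one arrives at the identity
\begin{equation*}
-\frac{d}{ds}\int_{\mathbb{T}^d} u m \,dx = \int_{\mathbb{T}^d} m^{1+\alpha}\Bigl[\tfrac{Du}{m^\alpha}\cdot D_pH_0\bigl(x,\tfrac{Du}{m^\alpha}\bigr)-H_0\bigl(x,\tfrac{Du}{m^\alpha}\bigr)\Bigr] dx+\int_{\mathbb{T}^d} V m\,dx.
\end{equation*}

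The next step is to invoke Assumption \ref{H*0} with $p=Du/m^\alpha$ to bound the bracket from below by $c|Du/m^\alpha|^\gamma - C$. Since $m^{1+\alpha}\cdot|Du/m^\alpha|^\gamma = m^{1+\alpha-\alpha\gamma}|Du|^\gamma = m^{1-\bar\alpha}|Du|^\gamma$ by the definition \eqref{abar} of $\bar\alpha$, this gives
\begin{equation*}
-\frac{d}{ds}\int_{\mathbb{T}^d} um\,dx \geq c\int_{\mathbb{T}^d} m^{1-\bar\alpha}|Du|^\gamma \,dx - C\int_{\mathbb{T}^d} m^{1+\alpha}\,dx + \int_{\mathbb{T}^d} Vm\,dx.
\end{equation*}

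Finally, I integrate in time on $[0,t]$, so the left-hand side becomes $\int u(x,0)m_0(x)\,dx - \int u(x,t)m(x,t)\,dx$. I bound each boundary term: using Proposition \ref{ubound} (i.e.\ $u\geq -C$) together with $\int m(\cdot,t)\,dx = 1$ yields $-\int u(x,t)m(x,t)\,dx \leq C$; and using $\|m_0\|_\infty\leq C$ together with the $L^1$ bound \eqref{AA2} on $u(\cdot,0)$ gives $\int u(x,0)m_0\,dx \leq C$. The term with $V$ is controlled by $\|V\|_\infty$ since $\int m\,dx=1$. Rearranging produces exactly the claimed inequality.

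No step here presents a real obstacle; the only thing one must be careful about is getting the signs right in the integration by parts (particularly the combination of $-\int u\,\mathrm{div}(D_pH_0\,m)\,dx = \int Du\cdot D_pH_0\, m\,dx$, which is what makes the quantity $p\cdot D_pH_0 - H_0$ appear and thereby allows Assumption \ref{H*0} to be applied). The error term $C\int_0^t\int m^{1+\alpha}$ on the right-hand side is unavoidable at this stage because the lower bound in Assumption \ref{H*0} is only $c|p|^\gamma - C$; controlling it is postponed to the short-time analysis in Section \ref{shortestim}.
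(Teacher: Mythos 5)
Your proposal is correct and follows essentially the same route as the paper: multiply the Hamilton--Jacobi equation by $m$, subtract the Fokker--Planck equation multiplied by $u$, integrate by parts so that the quantity $p\cdot D_pH_0-H_0$ appears, bound it below via Assumption \ref{H*0}, and control the boundary terms with Proposition \ref{ubound} and the $L^1$ bound \eqref{AA2} on $u$. The signs and the identification $m^{1+\alpha}|Du/m^{\alpha}|^{\gamma}=m^{1-\bar\alpha}|Du|^{\gamma}$ are handled correctly, so nothing is missing.
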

\begin{proof}
We multiply the first equation in \eqref{maineq} by $m$ and subtract the second equation multiplied by $u$. Then, integration by parts yields:
\[
\begin{split}
&\int_0^t\int_{\mathbb{T}^d}m^{1+\alpha}\left[\frac{Du}{m^{\alpha}}D_pH_0
\left(x, \frac{Du}{m^{\alpha}}\right)-H_0\left(x, \frac{Du}{m^{\alpha}}\right)\right]dxdt=
-\int_0^t\int_{\mathbb{T}^d}Vmdx+\int_{\mathbb{T}^d}u(x,0)m_0(x)dx\\&-\int_{\mathbb{T}^d}u(x,t)m(x,t)dx\leq C+\|m_0\|_{\infty}\int_{\mathbb{T}^d}|u(x,0)|dx-\inf_x u(x,t)\leq C, 
\end{split}
\]
where the last inequality follows from lower bounds on $u$ from Proposition \ref{ubound}, and the bound on $\int_{\mathbb{T}^d} |u| dx$ in the previous Proposition.
The claim in the statement follows from Assumption \ref{H*0} by using the 
 inequality $pD_pH_0(x,p)-H_0(x,p)\geq c|p|^{\gamma}- C$, for some $c,C>0$.
\end{proof}

\begin{pro}
\label{uuper}
Under Assumptions \ref{psi}-\ref{dhp}, there exists a constant $C:=C(\|V\|_{\infty}, \|\psi\|_{\infty},T)$ such that for any $C^\infty$ solution  $(u, m)$  of \eqref{maineq}, we have 
\[
\int_{\mathbb{T}^d}m^{1+\alpha}(x,t)dx+
\int_0^t\int_{\mathbb{T}^d}
m^{\alpha-1}(x,s)|Dm(x,s)|^2
dx ds\leq C.
\]
\end{pro}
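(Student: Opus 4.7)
My plan is to test the Fokker-Planck equation in \eqref{maineq} with the multiplier $m^\alpha$. Integrating by parts on $\mathbb{T}^d$ produces
\[
\frac{1}{1+\alpha}\frac{d}{dt}\int_{\mathbb{T}^d} m^{1+\alpha}\,dx + \alpha\int_{\mathbb{T}^d} m^{\alpha-1}|Dm|^2\,dx = -\alpha\int_{\mathbb{T}^d} m^\alpha D_pH_0\!\left(x,\tfrac{Du}{m^\alpha}\right)\!\cdot Dm\,dx-\alpha\int_{\mathbb{T}^d}m^\alpha b\cdot Dm\,dx,
\]
so the strategy is to absorb the two drift terms on the right into the dissipation $\int m^{\alpha-1}|Dm|^2$ together with terms of the form $\int m^{1-\bar\alpha}|Du|^\gamma$ and $\int m^{1+\alpha}$. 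Proposition \ref{CC1} will then let me close the argument with Gronwall.

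The term with $b$ is handled directly by writing $m^\alpha|Dm|=m^{(\alpha-1)/2}|Dm|\cdot m^{(\alpha+1)/2}$ and applying Cauchy--Schwarz followed by Young. For the Hamiltonian term, Assumption \ref{dhp} gives $|D_pH_0(x,Du/m^\alpha)|\leq C|Du|^{\gamma-1}m^{-\bar\alpha}+C$, reducing the task to controlling $\int m^{\alpha-\bar\alpha}|Du|^{\gamma-1}|Dm|$. Splitting as $(m^{(\alpha-1)/2}|Dm|)\cdot(m^{(\alpha+1)/2-\bar\alpha}|Du|^{\gamma-1})$ and using Cauchy--Schwarz gives, modulo an absorbable $\epsilon\int m^{\alpha-1}|Dm|^2$, the remainder $C_\epsilon\int m^{1+\alpha-2\bar\alpha}|Du|^{2(\gamma-1)}$. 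Since $\gamma<2$ (Assumption \ref{H0sub}), Young's inequality with conjugate exponents $\gamma/(2(\gamma-1))$ and $\gamma/(2-\gamma)$ applies; using $\bar\alpha=(\gamma-1)\alpha$ a short computation shows the resulting $m$-exponent in the leftover is exactly $1+\alpha$, yielding
\[
m^{1+\alpha-2\bar\alpha}|Du|^{2(\gamma-1)}\leq \epsilon\, m^{1-\bar\alpha}|Du|^\gamma+C_\epsilon\,m^{1+\alpha}.
\]

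Putting the pieces together, choosing $\epsilon$ small enough to absorb the gradient contributions on the left, integrating from $0$ to $t$, and using that $m_0\in C^\infty$ bounds $\int m_0^{1+\alpha}$, I obtain
\[
\int_{\mathbb{T}^d} m^{1+\alpha}(x,t)\,dx + c\int_0^t\!\!\int_{\mathbb{T}^d} m^{\alpha-1}|Dm|^2\,dx\,ds\leq C + C\int_0^t\!\!\int_{\mathbb{T}^d} m^{1-\bar\alpha}|Du|^\gamma\,dx\,ds + C\int_0^t\!\!\int_{\mathbb{T}^d} m^{1+\alpha}\,dx\,ds.
\]
Proposition \ref{CC1} absorbs the middle term into the last one, so setting $f(t):=\int m^{1+\alpha}(x,t)\,dx$ leaves the Gronwall-type inequality $f(t)\leq C+C\int_0^t f(s)\,ds$. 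Gronwall's lemma yields the desired bound on $f$ on $[0,T]$, and plugging this back into the dissipation estimate completes the proof.

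The main obstacle is the algebraic balancing of the two Young's inequalities so that exactly the quantity $m^{1-\bar\alpha}|Du|^\gamma$ appearing in Proposition \ref{CC1} is reproduced. This is precisely where subquadraticity is indispensable: if $\gamma\geq 2$, then $\gamma/(2(\gamma-1))\leq 1$ and Young's inequality in the required direction fails, so the absorption breaks down.
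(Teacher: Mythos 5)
Your proposal is correct and follows essentially the same route as the paper: multiply the Fokker--Planck equation by (a constant times) $m^{\alpha}$, absorb the drift terms into the dissipation via Cauchy--Schwarz, use Assumption \ref{dhp} and the Young inequality with exponents $\tfrac{\gamma}{2(\gamma-1)}$, $\tfrac{\gamma}{2-\gamma}$ to reduce to $m^{1-\bar{\alpha}}|Du|^{\gamma}$ and $m^{1+\alpha}$, then invoke Proposition \ref{CC1} and Gronwall. Your explicit remark that $\gamma<2$ is needed for the Young step is a point the paper uses implicitly without listing Assumption \ref{H0sub} in the hypotheses.
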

\begin{proof}
We begin by multiplying the second equation by $(\alpha+1)m^{\alpha}$. Next,  integrating by parts, we conclude
\begin{align}
\label{BB1}
&\left[\int_{\mathbb{T}^d}m^{1+\alpha}(x,t)dx\right]_t=-\alpha(1+\alpha)\int_{\mathbb{T}^d}m^{\alpha-1}\left[
|Dm|^2+mD_pH_0(x,\frac{Du}{m^{\alpha}})\cdot Dm+m b\cdot Dm\right]dx\\\notag&
\leq
-\alpha(1+\alpha)\int_{\mathbb{T}^d}\left[\frac 1 2 m^{\alpha-1}|Dm|^2- \left|D_pH_0\left(x,\frac{Du}{m^{\alpha}}\right)\right|^2m^{\alpha+1}-\|b\|^2_{\infty}m^{\alpha+1}\right]dx\\\notag&
\leq-\alpha(1+\alpha)\int_{\mathbb{T}^d}\left[\frac 1 2 m^{\alpha-1}|Dm|^2- C|Du|^{2(\gamma-1)}m^{\alpha+1-2\bar{\alpha}}-\|b\|^2_{\infty}m^{\alpha+1}\right]dx\\\notag&
\leq  -\alpha(1+\alpha)\int_{\mathbb{T}^d}\left[\frac 1 2 m^{\alpha-1}|Dm|^2-C|Du|^{\gamma}m^{1-\bar{\alpha}}-Cm^{\alpha+1}\right]dx,
\end{align}
where, in the last inequality, we have used Young's inequality:
\[
|p|^{2(\gamma-1)}m^{1+\alpha-2\bar{\alpha}}=\left(|p|^{\gamma}m^{1-\bar{\alpha}}\right)^{\frac{2(\gamma-1)}{\gamma}} \left(m^{1+\alpha}\right)^{\frac{2-\gamma}{\gamma}}\leq \frac{2(\gamma-1)}{\gamma} |p|^{\gamma}m^{1-\bar{\alpha}}+\frac{2-\gamma}{\gamma}m^{1+\alpha},
\]
and the definition of $\bar \alpha$ in \eqref{abar}.
Integrating \eqref{BB1} from $0$ to $t$
and using Proposition \ref{CC1},
 we conclude that
\begin{equation}
\label{malfa1}
\int_{\mathbb{T}^d}m^{1+\alpha}(x,t)dx+
\int_0^t\int_{\mathbb{T}^d}
m^{\alpha-1}(x,s)|Dm(x,s)|^2
dx ds\leq C+C\int_0^t\int_{\mathbb{T}^d}m^{1+\alpha}dxdt.
\end{equation}
In particular,
\[
\int_{\mathbb{T}^d}m^{1+\alpha}(x,t)dx\leq C+C\int_0^t\int_{\mathbb{T}^d}m^{1+\alpha}dxdt.
\]
Thus, by Gronwall's inequality, we have  $\int_{\mathbb{T}^d}m^{1+\alpha}(x,t)dx\leq C.$ This
estimate combined with \eqref{malfa1} yields $$\int_0^t\int_{\mathbb{T}^d}
m^{\alpha-1}(x,s)|Dm(x,s)|^2
dx ds\leq C.$$
\end{proof}

\begin{cor}
\label{BBB}
Under Assumptions \ref{psi}-\ref{dhp}, there exists a constant $C:=C(\|V\|_{\infty}, \|\psi\|_{\infty},T)$ such that 
\[
\int_0^t\int_{\mathbb{T}^d}
|Du|^{\gamma}m^{1-\bar{\alpha}}
dx dt\leq C
\]
\end{cor}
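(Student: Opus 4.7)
The plan is to obtain this corollary as an immediate consequence of the two preceding propositions, with essentially no new work required. Proposition \ref{CC1} already gives the bound
\[
\int_0^t\int_{\mathbb{T}^d} |Du|^{\gamma}m^{1-\bar{\alpha}}\,dx\,ds \leq C + C\int_0^t\int_{\mathbb{T}^d} m^{1+\alpha}\,dx\,ds,
\]
so the only task is to control the right-hand side uniformly in $t\in[0,T]$.

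For this, I would invoke Proposition \ref{uuper}, whose first conclusion is the pointwise-in-time estimate $\int_{\mathbb{T}^d} m^{1+\alpha}(x,s)\,dx \leq C$ for every $s\in[0,T]$, with a constant $C=C(\|V\|_\infty,\|\Psi\|_\infty,T)$. Integrating this inequality in $s$ over $[0,t]\subset[0,T]$ yields
\[
\int_0^t\int_{\mathbb{T}^d} m^{1+\alpha}(x,s)\,dx\,ds \leq Ct \leq CT,
\]
which can be absorbed into a (new) constant of the same type.

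Substituting this back into the estimate from Proposition \ref{CC1} gives the claim. There is no real obstacle here: the hard analytic work, namely the Gronwall closure on $\int m^{1+\alpha}$ together with the Young-type interpolation absorbing $|Du|^{2(\gamma-1)}m^{1+\alpha-2\bar{\alpha}}$ into $|Du|^\gamma m^{1-\bar{\alpha}}$ plus $m^{1+\alpha}$, has already been carried out in Proposition \ref{uuper}. The corollary simply packages the output of that chain in a form that is convenient for the subsequent sections (where $\int |Du|^\gamma m^{1-\bar\alpha}$ will be used as an $L^\gamma$-type control on $Du$ weighted by the density).
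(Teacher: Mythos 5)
Your argument is correct and is exactly the paper's proof: the corollary is obtained by feeding the uniform bound $\int_{\mathbb{T}^d} m^{1+\alpha}(x,s)\,dx\leq C$ from Proposition \ref{uuper} into the right-hand side of the estimate of Proposition \ref{CC1}. No further comment is needed.
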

\begin{proof}
The Corollary follows by combining Proposition \ref{CC1} with Proposition 
\ref{uuper}.
\end{proof}

\begin{pro}
\label{uuperB}
Under Assumptions \ref{psi}-\ref{alf},  there exists a constant $C:=C(\|V\|_{\infty}, \|\psi\|_{\infty},T-t)$ such that, for any $C^\infty$ solution $(u,m)$ of \eqref{maineq},  $u\leq C$.
\end{pro}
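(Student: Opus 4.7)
The plan is to adapt the nonlinear adjoint method from Proposition \ref{ubound}, but with a simpler test function -- the pure heat semigroup -- to extract an upper bound on $u$. For $\tau\in[0,T)$ fixed and $\phi\in C^\infty(\td)$ with $\phi\geq 0$ and $\|\phi\|_{L^1(\td)}\leq 1$, I would let $\zeta$ solve
\[
\zeta_t=\Delta\zeta,\qquad \zeta(x,\tau)=\phi(x).
\]
By the maximum principle $\zeta\geq 0$, and $\|\zeta(\cdot,s)\|_{L^1(\td)}=\|\phi\|_{L^1(\td)}$ for every $s\geq\tau$. Multiplying the first equation of \eqref{maineq} by $\zeta$, integrating over $[\tau,T]\times\td$, and integrating by parts (using $\zeta_t-\Delta\zeta=0$), I would obtain the identity
\[
\int_{\td}u(x,\tau)\phi(x)\,dx=\int_{\td}\Psi(x)\zeta(x,T)\,dx+\int_\tau^T\!\!\int_{\td}\Big[V\zeta-m^\alpha H_0\!\left(x,\tfrac{Du}{m^\alpha}\right)\zeta-(b\cdot Du)\zeta\Big]dx\,ds.
\]

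The next step is to bound the right-hand side from above. From Remark \ref{rem3}, $m^\alpha H_0(x,Du/m^\alpha)\geq c|Du|^\gamma/m^{\bar\alpha}-Cm^\alpha$, while Young's inequality with conjugate exponents $\gamma,\gamma'$, together with the identity $\bar\alpha\gamma'/\gamma=\alpha$ (cf.\ \eqref{abar}), gives, for any $\varepsilon>0$,
\[
|b\cdot Du|\leq \varepsilon|Du|^\gamma/m^{\bar\alpha}+C_\varepsilon m^\alpha.
\]
Picking $\varepsilon<c$ and discarding the resulting nonpositive $|Du|^\gamma/m^{\bar\alpha}$ contribution (legitimate since $\zeta\geq 0$), I would arrive at
\[
\int_{\td}u(x,\tau)\phi(x)\,dx\leq \|\Psi\|_\infty+(T-\tau)\|V\|_\infty+C\int_\tau^T\!\!\int_{\td}m^\alpha\zeta\,dx\,ds.
\]

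The remaining task is to control $\int_\tau^T\!\int m^\alpha\zeta\,dx\,ds$. H\"older's inequality with exponents $(1+\alpha)/\alpha$ and $1+\alpha$, combined with Proposition \ref{uuper}, yields
\[
\int_{\td}m^\alpha\zeta\,dx\leq \Big(\int_{\td}m^{1+\alpha}\,dx\Big)^{\alpha/(1+\alpha)}\|\zeta(\cdot,s)\|_{L^{1+\alpha}(\td)}\leq C\|\zeta(\cdot,s)\|_{L^{1+\alpha}(\td)}.
\]
Here Assumption \ref{alf} enters decisively: $\alpha<2/(d-2)$ is equivalent to $1+\alpha<d/(d-2)=2^*/2$, so Lemma \ref{heat} applies with $q=1+\alpha$ and gives $\int_\tau^T\|\zeta(\cdot,s)\|_{L^{1+\alpha}(\td)}\,ds\leq C$. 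Assembling the estimates,
\[
\int_{\td}u(x,\tau)\phi(x)\,dx\leq C(T-\tau),
\]
uniformly over admissible $\phi$. Since $u(\cdot,\tau)$ is continuous, taking $\phi$ to approximate $\delta_{x_0}$ yields the pointwise bound $u(x_0,\tau)\leq C(T-\tau)$ for every $x_0\in\td$.

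The main obstacle I expect is the sharp matching between the congestion exponent $\alpha$ and the dimension: the H\"older split just described closes only because Lemma \ref{heat} provides $L^1_t$ control of $\|\zeta\|_{L^{1+\alpha}}$ precisely when $1+\alpha<2^*/2$, and Assumption \ref{alf} is exactly the threshold making this possible. Absent this dimensional constraint, one would need sharper regularity on $m$ than the $L^\infty_tL^{1+\alpha}_x$ bound of Proposition \ref{uuper}.
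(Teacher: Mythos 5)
Your proposal is correct and follows essentially the same route as the paper: the heat semigroup as adjoint test function, a lower bound for $m^{\alpha}H_0\left(x,\tfrac{Du}{m^{\alpha}}\right)+b\cdot Du$ by $-Cm^{\alpha}$ (the paper invokes Remark \ref{rem3} directly where you spell out the Young-inequality absorption, but it is the same estimate), then H\"older against $\left\|m^{1+\alpha}\right\|_{L^\infty_t L^1_x}$ from Proposition \ref{uuper} and Lemma \ref{heat} with $q=1+\alpha<\tfrac{2^*}{2}$. No gaps; your identification of Assumption \ref{alf} as the exact condition making the H\"older split close matches the role it plays in the paper's argument.
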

\begin{proof}
Let $\rho$ be as in Lemma \ref{heat}. Multiplying the first equation in \eqref{maineq} by $\rho$, subtracting the equation for $\rho$ multiplied by $u$, and integrating by parts, we gather
\[
-\left[\int_{\mathbb{T}^d} u\rho dx \right]_t+\int_{\mathbb{T}^d}m^{\alpha}\left[H_0\left(x, \frac{Du}{m^{\alpha}}\right)+b\cdot \frac{Du}{m^{\alpha}}\right]\rho dx=\int_{\mathbb{T}^d} V\rho dx, 
\]
where we used the fact that $H_0(x,p)+b \cdot p$ is bounded by below as a consequence of Remark \ref{rem3}.
Hence, integrating in time, we conclude
\[
\int_{\mathbb{T}^d} u(x,\tau)\phi(x)dx\leq\int_{\tau}^T\int_{\mathbb{T}^d}V\rho dxdt+\int_{\mathbb{T}^d} \Psi(x)\rho(x,T)dx
+C\int_{\tau}^T\int_{\mathbb{T}^d}m^{\alpha}\rho dxdt.
\]
Using Holder's inequality and the bounds  $\int_{\mathbb{T}^d}\rho(x,t)dx\leq 1$, $\int_{\mathbb{T}^d}m^{1+\alpha}(x,t)dx\leq C$, we get
\begin{equation}
\label{upbin}
 u(x,\tau)
\leq (T-\tau)\|V\|_{\infty}+\|\Psi\|_{\infty}+C \|\rho\|_{L^1(L^{1+\alpha}(dx), dt)}.
\end{equation}
Because Assumption \ref{alf} holds,  $\alpha<\frac{2}{d-2}$. Consequently $1+\alpha<\frac{2^*}{2}$. Therefore, we can apply Lemma \ref{heat} to prove the result.
\end{proof}

%


\begin{cor}
\label{linf}Under Assumptions \ref{psi}-\ref{alf},  there exists a constant $C:=C(\|V\|_{\infty}, \|\psi\|_{\infty},T-t)$ such that for any $C^\infty$ solution $(u,m)$ of \eqref{maineq}  $\|u\|_{L^\infty(\Tt^d)}\leq C$.	
\end{cor}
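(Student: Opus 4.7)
The plan is to read the corollary off directly from the two pointwise bounds on $u$ already established in the preceding propositions. The lower bound $u \geq -C$ is the conclusion of Proposition \ref{ubound}: the nonlinear adjoint argument there produces the inequality
\[
\int_{\td} u(x,\tau)\phi(x)\,dx \geq -\bigl[(T-\tau)\|V\|_{\infty}+\|\psi\|_{\infty}\bigr]\|\phi\|_{L^1(\td)}
\]
for every nonnegative $\phi \in C^{\infty}(\td)$, and since the right-hand side depends on $\phi$ only through $\|\phi\|_{L^1(\td)}$, taking a sequence of nonnegative mollifiers $\phi_n \in C^\infty(\td)$ with $\|\phi_n\|_{L^1(\td)}=1$ concentrating at an arbitrary point $x_0 \in \td$ and using the $C^{\infty}$ regularity of $u$ yields the pointwise lower bound $u(x_0,\tau) \geq -C$. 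The matching upper bound $u \leq C$ is exactly the content of Proposition \ref{uuperB}: the estimate \eqref{upbin}, together with Lemma \ref{heat} applied under the restriction $1+\alpha < 2^*/2$ (which is exactly Assumption \ref{alf}), gives a uniform pointwise upper bound on $u(x_0,\tau)$ via the same Dirac-approximation argument.

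Combining these two one-sided estimates delivers the two-sided pointwise bound $|u(x,\tau)| \leq C$ for every $(x,\tau) \in \td \times [0,T]$, hence $\|u(\cdot,\tau)\|_{L^{\infty}(\td)} \leq C$ with a constant of the stated form $C = C(\|V\|_{\infty},\|\psi\|_{\infty},T-\tau)$. There is no real obstacle here: all of the analytic work — the $L^{1+\alpha}$ bound on $m$ from Proposition \ref{uuper}, the heat-kernel estimate of Lemma \ref{heat}, and the two adjoint-type duality computations — has already been carried out, and the role of the corollary is simply to assemble them into a single $L^{\infty}$ statement for convenient later reference.
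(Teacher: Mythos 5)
Your proof is correct and coincides with the paper's: the corollary is obtained by combining the lower bound of Proposition \ref{ubound} with the upper bound of Proposition \ref{uuperB}, and the mollifier argument you spell out is exactly the (implicit) step already carried out inside those two propositions.
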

\begin{proof}
The result follows by combining Propositions \ref{ubound} and  \ref{uuperB}.
\end{proof}
\begin{pro}
\label{D1mp}
Under Assumptions \ref{psi}-\ref{H0sub}, there exists constant $c_r, C_r=C_r(\alpha,T)>0$ that have polynomial growth in $r$, such that for any $C^\infty$ solution $(u,m)$ of \eqref{maineq}  and $r>1$
\[
\int_{\mathbb{T}^d}\frac{1}{m^r(x,t)}dx+c_r\int_0^t\int_{\mathbb{T}^d}\left| D\frac{1}{m^{r/2}}\right|^2dxdt+\int_0^t\int_{\mathbb{T}^d}\frac{|Du|^{\gamma}}{m^{r+\bar{\alpha}}}dxdt\leq C_r+C_r\int_0^t\int_{\mathbb{T}^d} \frac{1}{m^q}dxdt,\ \forall t,
\]
where $\bar \alpha$ is given by \eqref{abar} and  
\begin{equation}
\label{qforumula}
q=r+\frac{2\bar{\alpha}}{2-\gamma}.
\end{equation} 
\end{pro}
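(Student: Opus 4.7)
My strategy is to combine two energy estimates: a Fokker--Planck estimate controlling $\int m^{-r}$ and $\int |D(m^{-r/2})|^2$, and a Hamilton--Jacobi estimate producing $\int |Du|^\gamma/m^{r+\bar\alpha}$ as a left-hand side quantity. The key is an interplay between them: Step 1 generates an $\varepsilon\int|Du|^\gamma/m^{r+\bar\alpha}$ term on the right, which gets absorbed by Step 2, while Step 2 generates small gradient terms absorbed by Step 1.

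\emph{Step 1 (FP dissipation).} Multiplying the second equation of \eqref{maineq} by $-rm^{-r-1}$, integrating, and using the identity $|D(m^{-r/2})|^2=(r^2/4)m^{-r-2}|Dm|^2$, I obtain
\[
\frac{d}{dt}\int m^{-r}dx+\frac{4(r+1)}{r}\int|D(m^{-r/2})|^2dx=2(r+1)\int m^{-r/2}D(m^{-r/2})\cdot(D_pH_0+b)dx.
\]
A Cauchy--Schwarz inequality on the right absorbs half of the gradient, leaving $\lesssim r^2\int m^{-r}|D_pH_0|^2+r^2\|b\|_\infty^2\int m^{-r}$. Assumption \ref{dhp} gives $|D_pH_0|^2\leq C|Du|^{2(\gamma-1)}/m^{2\bar\alpha}+C$, and Young with conjugate exponents $\gamma/(2(\gamma-1))$ and $\gamma/(2-\gamma)$ (both $>1$ since $\gamma<2$ by Assumption \ref{H0sub}) yields
\[
|Du|^{2(\gamma-1)}m^{-r-2\bar\alpha}\leq\varepsilon\frac{|Du|^\gamma}{m^{r+\bar\alpha}}+C_\varepsilon m^{-q},
\]
giving, after integration in time,
\[
\int m^{-r}(t)+c_r\int_0^t\!\int|D(m^{-r/2})|^2\leq C_r+\varepsilon\int_0^t\!\int\frac{|Du|^\gamma}{m^{r+\bar\alpha}}+C_{r,\varepsilon}\int_0^t\!\int(m^{-r}+m^{-q}).
\]

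\emph{Step 2 (HJ extraction).} Remark \ref{rem3}, applied at $p=Du/m^\alpha$ and multiplied by $m^\alpha$, gives $c|Du|^\gamma/m^{\bar\alpha}\leq m^\alpha H_0(x,Du/m^\alpha)+Cm^\alpha$. Substituting $m^\alpha H_0=u_t+\Delta u-b\cdot Du+V$ from the first equation of \eqref{maineq}, dividing by $m^r$, and integrating over $\mathbb{T}^d\times[0,t]$, I process the $u_t m^{-r}$ term by integration by parts in time (using FP to replace $m_t$) and the $\Delta u\cdot m^{-r}$ term by integration by parts in space; the cross term $r\int m^{-r-1}Du\cdot Dm$ appears from both with opposite signs and cancels exactly. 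The result is
\[
c\int_0^t\!\int\frac{|Du|^\gamma}{m^{r+\bar\alpha}}\leq\bigl[\int u m^{-r}\bigr]_0^t-r\int_0^t\!\int m^{-r}Du\cdot D_pH_0+(\text{other errors}).
\]
The crucial observation is that, by Remark \ref{rem1} and Remark \ref{rem3}, $Du\cdot D_pH_0\geq m^\alpha H_0\geq c|Du|^\gamma/m^{\bar\alpha}-Cm^\alpha$, so $-r\int m^{-r}Du\cdot D_pH_0\leq-cr\int|Du|^\gamma/m^{r+\bar\alpha}+Cr\int m^{\alpha-r}$. Moving this to the left yields a coefficient $c(r+1)$ on $\int |Du|^\gamma/m^{r+\bar\alpha}$. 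The remaining error terms, including $r(r+1)\int u m^{-r-2}|Dm|^2$ and $r(r+1)\int u m^{-r-1}Dm\cdot(D_pH_0+b)$, are estimated using $\|u\|_\infty\leq C$ (Corollary \ref{linf}), Cauchy--Schwarz/Young, and the bound on $|D_pH_0|$; after division by $r+1$, they contribute at most $O(1/r)\int|D(m^{-r/2})|^2$, $\eta\int|Du|^\gamma/m^{r+\bar\alpha}$, and $C_{r,\eta}\int(1+m^{-r}+m^{-q})$.

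\emph{Step 3 (Combine).} Adding a small positive multiple of the Step 2 inequality to the Step 1 inequality, I choose the scaling so that the $O(1/r)\int|D(m^{-r/2})|^2$ from Step 2 is absorbed into $c_r\int|D(m^{-r/2})|^2$ from Step 1, and $\varepsilon$ is chosen small enough that the $\varepsilon\int|Du|^\gamma/m^{r+\bar\alpha}$ from Step 1 is absorbed into the left-hand side coming from Step 2. The boundary terms are handled by $\|u\|_\infty\leq C$ together with $m_0\geq k_0>0$ (Assumption \ref{m0}). Finally, since $q>r$, the inequality $m^{-r}\leq 1+m^{-q}$ absorbs the $\int m^{-r}$ term on the right into $C_r+C_r\int m^{-q}$, giving the claim. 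Tracking the powers of $r$ through each Cauchy--Schwarz/Young application preserves polynomial growth of $c_r$ and $C_r$.

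\emph{Main obstacle.} The delicate point is Step 2, specifically the error term $r(r+1)\int u m^{-r-2}|Dm|^2=O(1)\cdot\int|D(m^{-r/2})|^2$, which is of the same order as the gradient term on the left of Step 1 before scaling. Its absorption relies crucially on the division by $r+1$ arising from the convexity-based lower bound on $-r\int m^{-r}Du\cdot D_pH_0$. Bookkeeping the $r$-dependence throughout is essential to reach polynomial growth constants.
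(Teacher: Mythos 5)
Your proof is correct, and at its core it is the same computation as the paper's: both test the Hamilton--Jacobi equation against $m^{-r}$ and, through the time integration by parts of $u_t m^{-r}$ with $m_t$ replaced by the Fokker--Planck equation, effectively test that equation against $r\,u\,m^{-r-1}$; both rely on the coercivity of $H_0+r\,p\cdot D_pH_0$ (which you reassemble from Remarks \ref{rem1} and \ref{rem3} rather than quoting Remark \ref{rem4}); and both land on $q=r+\frac{2\bar\alpha}{2-\gamma}$ through a Young inequality that is exactly where $\gamma<2$ enters. The one genuine structural difference is the source of the dissipation term $\int|D(m^{-r/2})|^2$. The paper normalizes $u\le-1$ (legitimate by Corollary \ref{linf}, since the system only involves $Du$ away from the terminal condition), so that $-r(r+1)\int u\,|Dm|^2m^{-r-2}$ and the running-time boundary term $-\int u(\cdot,t)m^{-r}(\cdot,t)$ acquire a favorable sign and deliver the gradient term and $\int m^{-r}(\cdot,t)$ directly on the left. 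You instead produce the gradient term from a separate pure Fokker--Planck energy estimate (your Step 1) and treat $r(r+1)\int u|Dm|^2m^{-r-2}$ as an $O(1)\int|D(m^{-r/2})|^2$ error via $\|u\|_\infty\le C$, absorbable only after dividing the Hamilton--Jacobi estimate by $r+1$; this costs you the smallness constraint $\varepsilon\lesssim r^{-2}$ in Step 1 (harmless, since $C_\varepsilon$ grows only polynomially there) and the extra bookkeeping of Step 3. Both routes are valid; the paper's normalization is the shorter path, while yours makes the absorption mechanism fully explicit. Two small points you should spell out: the running-time boundary term $\int u(\cdot,t)m^{-r}(\cdot,t)\,dx$ from Step 2 must itself be absorbed into the $\int m^{-r}(\cdot,t)\,dx$ produced by Step 1 (again using that $\|u\|_\infty/(r+1)$, or a small multiple of the Step 2 inequality, is less than one), and the drift terms of order $r\int m^{-r}|Du|$ hidden in your ``other errors'' need their own Young inequality (yielding $m^{\alpha-r}\le m^{-r}+1$), which is what the paper's case split on $m<1$ versus $m\ge 1$ accomplishes.
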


\begin{proof}
By adding a constant to $u_0$, we can assume, without loss of 
generality, that $u\leq -1$. Fix $r>1$.
We begin by
 multiplying the first equation in \eqref{main} by $\frac{1}{m^r}$, and adding
 it to the second equation multiplied by $r\frac{u}{m^{r+1}}$. After integrating by parts, we obtain
\[\begin{split}
&-\int_{\mathbb{T}^d}\left(  \frac{u}{m^{r}}\right)_tdx-\int_{\mathbb{T}^d}r(r+1)\frac{u|Dm|^2}{m^{r+2}} dx+\int_{\mathbb{T}^d}m^{\alpha}\frac{H_0+r\frac{Du}{m^{\alpha}} D_pH_0}{m^r}dx\\&
-\int_{\mathbb{T}^d}r(r+1)\frac{uD_pH_0Dm}{m^{r+1}}dx
+\int_{\mathbb{T}^d}\left[(r+1)  \frac{b\cdot Du}{m^{r}}-r(r+1) u\frac{b Dm}{m^{r+1}} -\frac{V}{m^r}\right]dx=0.
\end{split}
\]
We integrate the inequality in $t$.
For $m<1$, we have
\[
\frac{|Du|}{m^{r}}\leq \epsilon \frac{|Du|^{\gamma}}{m^{r+\bar{\alpha}}}+\frac{1}{m^r}+C_{r,\epsilon},
\]
whereas for $m\geq 1$
\[
\frac{|Du|}{m^{r}}\leq |Du|\leq  |Du|^\gamma m^{1-\bar \alpha}+C .
\]
Taking into account these estimates and the bound in 
Corollary \ref{BBB}, we get
\[
\int_0^t\int_{\Tt^d} \frac{|Du|}{m^{r}}\leq 
\int_0^t\int_{\Tt^d}
\frac{|Du|^{\gamma}}{m^{r+\bar{\alpha}}}+\frac{1}{m^r}+\tilde C_{r,\epsilon}.
\]
Then, we use the estimates:
\[
c\frac{|Du|^{\gamma}}{m^{r+\bar{\alpha}}}\leq m^{\alpha}\frac{H_0+r\frac{Du}{m^{\alpha}} D_pH_0}{m^r}+C\frac{r}{m^{r-\alpha}} \text{ (see Remark \ref{rem4}),} 
\]
and
\[
\frac{|Dm|}{m^{r+1}}\leq \epsilon\frac{|Dm|^2}{m^{r+2}}+C_{\epsilon}\frac{1}{m^r},
\] 
to get
\[
\begin{split}
&\int_{\mathbb{T}^d}\frac{1}{m^r(x,t)}dx+c_r\int_0^t\int_{\mathbb{T}^d}\left| D\frac{1}{m^{r/2}}\right|^2dxds+c_r\int_0^t\int_{\mathbb{T}^d}\frac{|Du|^{\gamma}}{m^{r+\bar{\alpha}}}dxds\leq 
\\&C_r\int_0^t\int_{\mathbb{T}^d}|Du|^{\gamma-1}\frac{|Dm|}{m^{r+\bar{\alpha}+1}}dxds+C_r\int_0^t\int_{\mathbb{T}^d}\frac{1}{m^{r}}dxds+C_r.
\end{split}
\]
The required estimate follows from the inequalities:
\[
|Du|^{\gamma-1}\frac{|Dm|}{m^{r+\bar{\alpha}+1}}\leq \epsilon\frac{|Du|^{\gamma}}{m^{r+\bar{\alpha}}}+\epsilon\frac{|Dm|^2}{m^{r+2}}+C_{\epsilon}\frac{1}{m^q},
\]
and
\[
\frac{1}{m^r}\leq \frac{1}{m^q}+1, 
\]
where $q$ is given by \eqref{qforumula}.
\end{proof}

\section{Short-Time Estimates}
\label{shortestim}

In this section, we establish estimates for $C^\infty$ solutions 
of \eqref{maineq} for small values of $T$. The key idea is to use the estimate in Proposition 
\ref{D1mp} to control the growth of $\frac 1 m$. Because $q>r$ in \eqref{qforumula}, 
we can only achieve bounds for small $T$. 
We begin with the following bound on $\frac 1 m$:

\begin{teo}
\label{mpshort}
Under Assumptions \ref{psi}-\ref{H0sub}, there exist  $r_0>0$, a time $t_1(r)>0$ and  constants $C=C(r,\gamma, \alpha)>0, \ \delta>0$, such that for any $C^\infty$ solution $(u, m)$ to \eqref{maineq}  and $r\geq r_0$:
\[
\int_{\mathbb{T}^d} \frac{1}{m^r(x,t)}dx\leq C\left[1+\frac{1}{(t_1-t)^{\delta}}\right],\ \forall t<t_1.
\]
\end{teo}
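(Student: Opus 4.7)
\noindent\textit{Proof proposal.}
Write $\phi_r(t):=\int_{\td} m^{-r}(\cdot,t)\,dx$. Set $\eta:=\frac{2\bar\alpha}{2-\gamma}$ (finite because $\gamma<2$, Assumption \ref{H0sub}) and $s:=\frac{r\cdot 2^*}{2}$, so the exponent $q$ appearing on the right-hand side of Proposition \ref{D1mp} is $q=r+\eta$. My plan is to close a self-contained integral inequality
\[
\phi_r(t)\;\le\; A_r+B_r\int_0^t \phi_r^\sigma\,ds,\qquad \sigma>1,
\]
and then invoke a super-linear Gronwall comparison to extract the blow-up-type bound $\phi_r(t)\le C[1+(t_1-t)^{-\delta}]$ announced in the statement. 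The two analytic ingredients needed to reach such an inequality are the Sobolev embedding $W^{1,2}(\td)\hookrightarrow L^{2^*}(\td)$ (which upgrades the gradient term $\int|D\,m^{-r/2}|^2$ already present in Proposition \ref{D1mp} into control of a strictly higher moment of $1/m$) and Lebesgue interpolation between $\phi_r$ and that higher moment.

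Applying the Sobolev embedding to $m^{-r/2}$ gives
\[
\phi_s^{2/2^*}\;=\;\|m^{-r/2}\|_{L^{2^*}}^2\;\le\;C\|D(m^{-r/2})\|_{L^2}^2+C\phi_r,
\]
so that, after substitution into Proposition \ref{D1mp},
\[
\phi_r(t)+c_r\int_0^t \phi_s^{2/2^*}\,ds\;\le\;C_r+C_r\int_0^t(\phi_q+\phi_r)\,ds.
\]
The interpolation $\|m^{-1}\|_{L^q}\le\|m^{-1}\|_{L^r}^{\theta}\|m^{-1}\|_{L^s}^{1-\theta}$ with $\tfrac{1}{q}=\tfrac{\theta}{r}+\tfrac{1-\theta}{s}$, raised to the $q$-th power, becomes $\phi_q\le\phi_r^{a}\phi_s^{b}$ where $a+b=1$ and a direct calculation yields $b=\eta/(s-r)=\eta(d-2)/(2r)$. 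Define $r_0:=\eta d/2$. For every $r>r_0$ one has $b<2/2^*$, so Young's inequality with exponents $(2/2^*)/b$ and $1/(1-b\cdot 2^*/2)$ gives $\phi_q\le\epsilon\,\phi_s^{2/2^*}+C_\epsilon\,\phi_r^{\sigma}$ with
\[
\sigma\;=\;\frac{1-b}{1-b\cdot 2^*/2}\;=\;\frac{2r-\eta(d-2)}{2r-\eta d}\;>\;1.
\]
Choosing $\epsilon$ small enough to absorb $\phi_s^{2/2^*}$ into the left-hand side, and using $\phi_r\le 1+\phi_r^\sigma$ to fold the linear $\phi_r$ piece into the $\phi_r^\sigma$ piece, I arrive at the target inequality.

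Finally, comparison with the scalar ODE $\dot Y=B_r\,Y^\sigma$ with $Y(0)=A_r$ produces the explicit solution $Y(t)=(A_r^{1-\sigma}-B_r(\sigma-1)t)^{-1/(\sigma-1)}$, which blows up at
\[
t_1\;:=\;\frac{A_r^{1-\sigma}}{B_r(\sigma-1)}\;>\;0,
\]
and on $[0,t_1)$ satisfies $Y(t)\le C(t_1-t)^{-1/(\sigma-1)}$; hence $\phi_r(t)\le C[1+(t_1-t)^{-\delta}]$ with $\delta=1/(\sigma-1)=(2r-\eta d)/(2\eta)$. The principal obstacle is engineering the interpolation so that $b<2/2^*$: this forces $r>\eta d/2$, which in turn forces $\eta<\infty$, tying the whole argument to the sub-quadratic hypothesis $\gamma<2$ (Assumption \ref{H0sub}). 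Since $\sigma>1$ is intrinsic to the method, a global-in-time bound of the above form is unreachable along this route, consistent with the short-time nature of Theorem \ref{mpshort}.
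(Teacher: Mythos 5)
Your proposal is correct and follows essentially the same route as the paper: Proposition \ref{D1mp} plus Sobolev embedding for $m^{-r/2}$, Lebesgue interpolation between the $L^r$ and $L^{2^*r/2}$ moments of $1/m$, Young's inequality to absorb the Sobolev term, and a superlinear Gronwall comparison giving blow-up at a finite $t_1$. Your exponent $\sigma=\frac{1-b}{1-b\cdot 2^*/2}$ coincides with the paper's intended $\beta$ (the paper's displayed formula $\beta=\frac{1-\bar\lambda}{1-\lambda}$ appears to be an inversion typo), and your threshold $r_0=\eta d/2$ and rate $\delta=1/(\sigma-1)$ match as well.
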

\begin{proof}
We choose $r_0$ sufficiently large such that  $\frac{2^*}{2}r=\frac{dr}{d-2}>q=r+\frac{2\bar{\alpha}}{2-\gamma}$,
for $r\geq r_0$.
Let $\lambda>0$ be such that $\frac{2^*}{2}r\lambda+r(1-\lambda)=q$, that is, $\lambda=\frac{\bar\alpha(d-2)}{(2-\gamma)r}<1,\text{ for } r\geq r_0$.
We set $\bar{\lambda}=\frac{2^*}{2}\lambda=\frac{\bar\alpha d}{(2-\gamma)r}$.
Provided $r_0$ is large enough, $\bar \lambda <1$ and $\beta=\frac{1-\bar{\lambda}}{1-\lambda}>1$,  for all $r\geq r_0$.
Then, using H\"older's and Young's inequalities, we obtain
 \[\begin{split}
\int_{\mathbb{T}^d} \frac{1}{m^{q}}dx\leq \left( \int_{\mathbb{T}^d} \frac{1}{m^{\frac{2^*}{2}r}}dx\right)^{\lambda} &  \left( \int_{\mathbb{T}^d} \frac{1}{m^r}dx \right)^{1-\lambda}= \left[ \left( \int_{\mathbb{T}^d} \frac{1}{m^{\frac{2^*}{2}r}}dx\right)^{2/2^*}\right]^{\bar{\lambda}}  \left[\left( \int_{\mathbb{T}^d} \frac{1}{m^r}dx \right)^{\beta}\right]^{1-\bar{\lambda}}\leq\\&
 \varepsilon  \bar{\lambda} \left( \int_{\mathbb{T}^d} \frac{1}{m^{\frac{2^*}{2}r}}dx\right)^{2/2^*}+\frac{1}{\varepsilon^\tau}(1-\bar{\lambda})\left( \int_{\mathbb{T}^d} \frac{1}{m^r}dx \right)^{\beta},
\end{split}\]
for any $\varepsilon>0$ and some exponent $\tau>0$.
From Sobolev's inequality,
\[
\int_{\mathbb{T}^d} \frac{|Dm|^2}{m^{r+2}}dx=\frac{4}{r^2}\int_{\mathbb{T}^d} \left|D\left(\frac{1}{m^{r/2}}\right)\right|^2dx\geq c\frac{4}{r^2}\left( \int_{\mathbb{T}^d} \frac{1}{m^{\frac{2^*}{2}r}}\right)^{2/2^*}dx-\frac{4}{r^2}\int_{\mathbb{T}^d} \frac{1}{m^{r}}dx.
\]
By combining Proposition \ref{D1mp} and the above inequalities
with the estimate
\[
\int_{\mathbb{T}^d} \frac{1}{m^{r}}dx\leq \varepsilon\left(\int_{\mathbb{T}^d} \frac{1}{m^{r}}dx\right)^{\beta}+C_{\varepsilon},\ \forall \varepsilon>0,
\]
we obtain
\begin{equation}
\int_{\mathbb{T}^d}\frac{1}{m^r(x,t)}dx\leq C+C\int_0^t\left(\int_{\mathbb{T}^d} \frac{1}{m^r}dx\right)^{\beta}dt,\ \forall t\in[0,T].
\end{equation}
Let $h(t)=\int_{\mathbb{T}^d} \frac{1}{m(x,t)^r}dx$ and $H(t)=\int\limits_0^th^{\beta}(s)ds$. Then, the previous inequality reads 
\[
h(t)\leq C_{r,\gamma,T}+  C_{r,\gamma,T}H(t).
\]
Thus,
\begin{equation}
\label{star}
\dot H(t)=h^{\beta}(t)\leq C_{r,\alpha, \gamma,T}(1+H(t))^{\beta}.
\end{equation}
Integrating \eqref{star} and taking into account that $H(0)=1$, we get
\[
(1+H(t))^{1-\beta}\geq 1-(\beta-1)C_{r,\gamma,T} t. 
\]
Accordingly, 
\[
H(t)\leq \frac{1}{\left[1-(\beta-1)C_{r,\gamma,T} t\right]^{\frac{1}{\beta-1}}},\text{ for all } t<t_1(r):=\frac{1}{(\beta-1)C_{r,\gamma,T}}.
\]
Consequently, 
\[
\int_{\mathbb{T}^d} \frac{1}{m(x,t)^r}dx=h(t)\leq C_{r,\gamma,T}+C_{r,\gamma,T}H(t)\leq C+\frac{C}{(t_1-t)^{\frac{1}{\beta-1}}},\ t<t_1.
\]
\end{proof}

\begin{cor}
\label{Dmpshort}
Suppose Assumptions \ref{psi}-\ref{H0sub} hold. Let $r_0$ and
$t_1(r)$
be as in Theorem \ref{mpshort}.  
For $r>r_0$, let $t\leq t_1(r)\equiv t_1$. Then, 
there exists a constant $C_r$
and $\delta_r$ 
such that for any $C^\infty$ solution (u,m) of \eqref{maineq} 
\begin{equation}
\label{om}
\int_0^t\int_{\mathbb{T}^d}\left| D\frac{1}{m^{r/2}}\right|^2dxdt\leq C_r+\frac{C_r}{(t_1-t)^{\delta_r}},\quad \forall \, t<t_1.
\end{equation}
\end{cor}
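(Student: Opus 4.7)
The plan is to chain Proposition \ref{D1mp} at exponent $r$ with Theorem \ref{mpshort} applied at the larger exponent $q = r + \frac{2\bar\alpha}{2-\gamma}$. From Proposition \ref{D1mp} one reads off immediately that
\[
c_r\int_0^t\int_{\mathbb{T}^d}\left|D\frac{1}{m^{r/2}}\right|^2 dx\,ds \leq C_r + C_r\int_0^t\int_{\mathbb{T}^d}\frac{1}{m^q}\,dx\,ds,
\]
so the entire task is to control the space-time integral of $1/m^q$ on $[0,t_1)$.

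Since $q>r\geq r_0$, Theorem \ref{mpshort} applies at exponent $q$ and furnishes a time $t_1(q)>0$, an exponent $\delta>0$, and a constant $C$ such that
\[
\int_{\mathbb{T}^d}\frac{1}{m^q(x,s)}\,dx \leq C\Bigl[1+\frac{1}{(t_1(q)-s)^{\delta}}\Bigr]\qquad\text{for all } s<t_1(q).
\]
Without loss of generality (by shrinking $t_1(r)$ to $\min(t_1(r),t_1(q))$, which is allowed since the bound in Theorem \ref{mpshort} persists on any smaller subinterval), we may assume $t_1\leq t_1(q)$, so that this pointwise-in-time estimate holds throughout $[0,t_1)$.

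Integrating in $s$ from $0$ to $t<t_1$ then yields
\[
\int_0^t\int_{\mathbb{T}^d}\frac{1}{m^q}\,dx\,ds \leq C t_1 + C\int_0^t\frac{ds}{(t_1-s)^{\delta}},
\]
where the last integral is bounded by a constant if $\delta<1$, by $C|\log(t_1-t)|$ if $\delta=1$, and by $C(t_1-t)^{1-\delta}$ if $\delta>1$. In every case one obtains a bound of the form $C_r+C_r/(t_1-t)^{\delta_r}$ for a suitable $\delta_r\geq 0$. Plugging this back into the Proposition \ref{D1mp} estimate gives \eqref{om}.

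No genuine obstacle arises here: the argument is a direct composition of the two prior results, and the only technical point is making sure the two life-times $t_1(r)$ and $t_1(q)$ are compatible, which is handled at no cost by passing to their minimum since both $r$ and $q$ depend only on $r$ and the fixed data.
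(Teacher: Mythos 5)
Your overall strategy---feed a bound on $\int_0^t\int_{\td} m^{-q}\,dx\,ds$ into Proposition \ref{D1mp}---is the right one, but the step where you invoke Theorem \ref{mpshort} at the exponent $q$ and then ``shrink $t_1(r)$ to $\min(t_1(r),t_1(q))$'' is a genuine gap relative to the statement being proved. The Corollary fixes $t_1=t_1(r)$ as the time produced by Theorem \ref{mpshort} for the exponent $r$ and asserts the estimate for all $t<t_1(r)$, with the singularity located at $t_1(r)$. Nothing in Theorem \ref{mpshort} or its proof (where $t_1(r)=\frac{1}{(\beta-1)C_{r,\gamma,T}}$, with $C_{r,\gamma,T}$ growing polynomially in $r$ and $\beta-1=O(1/r)$) gives monotonicity of $r\mapsto t_1(r)$, so you cannot rule out $t_1(q)<t_1(r)$. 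In that case your argument yields the estimate only on $[0,t_1(q))$, with blow-up at the wrong time, and says nothing on $[t_1(q),t_1(r))$; redefining $t_1$ is not ``at no cost,'' it proves a weaker statement than the one claimed.

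The repair is to stay at the exponent $r$ throughout, reusing the machinery already inside the proof of Theorem \ref{mpshort}. There one has, by H\"older's, Young's, and Sobolev's inequalities,
\[
\int_{\td}\frac{1}{m^{q}}\,dx\le \varepsilon C\int_{\td}\Bigl|D\frac{1}{m^{r/2}}\Bigr|^{2}dx+\varepsilon C\int_{\td}\frac{1}{m^{r}}\,dx+C_{\varepsilon}\Bigl(\int_{\td}\frac{1}{m^{r}}\,dx\Bigr)^{\beta}.
\]
Inserting this into Proposition \ref{D1mp}, choosing $\varepsilon$ small enough to absorb the gradient term into the left-hand side, and then bounding $\int_0^t\bigl(\int_{\td}m^{-r}\,dx\bigr)^{\beta}ds=H(t)\le C(t_1-t)^{-1/(\beta-1)}$ exactly as in the proof of Theorem \ref{mpshort} (hence with the same $t_1(r)$) gives \eqref{om} with $\delta_r=\frac{1}{\beta-1}$. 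Your computation of $\int_0^t(t_1-s)^{-\delta}ds$ in the three regimes is correct and would be the right final step in either version.
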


Iterating the estimates from Proposition \ref{D1mp}, 
we prove next bounds that are uniform in $r$. 
\begin{pro}
\label{1minf}
Under Assumptions \ref{psi}-\ref{H0sub}, there exist  $r_1>0$,  and  constants $C=C(r,\gamma, \alpha)>0, \ \beta_r>1,$ such that for any $C^\infty$ solution $(u, m)$ to \eqref{maineq}  and $r\geq r_1$:
\[
\left\| \frac{1}{m}\right\|_{L^{\infty}([0,t]\times \td)}\leq C_t\left(1+ \left\|\frac 1 m\right\|^{\beta_r}_{L^{\infty}([0,t],L^{r} (\Tt^d))}\right).
\]
\end{pro}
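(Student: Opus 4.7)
The strategy is a Moser-type iteration on the exponent, with Proposition \ref{D1mp} supplying the energy estimate and Sobolev embedding carrying the integrability upward. Setting $v := 1/m$, the key structural observation is that the ``loss'' $q(s)-s = \frac{2\bar\alpha}{2-\gamma}$ is a \emph{fixed} constant independent of $s$, so $q(s)/s \to 1$ as $s \to \infty$. This is exactly what allows an iteration to reach $L^\infty$ at the cost of only a finite overall exponent $\beta_r$.

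Concretely, for each $s \geq r$, Proposition \ref{D1mp} reads
\[
E_s(t) := \sup_{\tau \leq t}\|v(\cdot,\tau)\|_{L^s(\td)}^s + c_s\int_0^t\|D(v^{s/2})\|_{L^2(\td)}^2 d\tau \leq C_s + C_s \int_0^t \|v\|_{L^{q(s)}(\td)}^{q(s)}d\tau.
\]
Applying the Sobolev inequality $\|w\|_{L^{2^*}} \leq C(\|Dw\|_{L^2}+\|w\|_{L^2})$ to $w = v^{s/2}$, and then interpolating via the standard parabolic embedding $L^\infty_t L^2 \cap L^2_t H^1 \hookrightarrow L^{2(d+2)/d}_{t,x}$, I will show
\[
\int_0^t\int_{\td} v^{s(d+2)/d}dxd\tau \leq C_t\, E_s(t)^{(d+2)/d}.
\]

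For the iteration I fix a geometric sequence $r_k := r \cdot \lambda^k$ with $\lambda = (d+2)/d$. At stage $k$ I apply the estimate above with $s=r_{k+1}$. Since $q(r_{k+1})-r_{k+1}$ is a fixed constant while $r_{k+1}(d+2)/d - r_{k+1}$ grows linearly in $r_{k+1}$, for all $r_{k+1}$ past a threshold $r_1$ one has $r_k < q(r_{k+1}) < r_{k+1}(d+2)/d$. I then interpolate $\|v\|_{L^{q(r_{k+1})}}$ by Hölder between $L^{r_k}$ (the inductive hypothesis) and $L^{r_{k+1}(d+2)/d}$ (the Sobolev output). Young's inequality absorbs the top-exponent term into $E_{r_{k+1}}$, yielding a recurrence of the form
\[
\|v\|_{L^\infty_{[0,t]}L^{r_{k+1}}} \leq C_k\bigl(1 + \|v\|_{L^\infty_{[0,t]}L^{r_k}}^{\mu_k}\bigr),
\]
with $\mu_k > 1$, $\mu_k \to 1$ as $k \to \infty$, and $C_k$ of at most polynomial growth in $r_k$ (as asserted in Proposition \ref{D1mp}). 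Composing the recurrence and passing $r_k\to\infty$ gives $\|v\|_{L^\infty([0,t]\times\td)} \leq C_t(1+\|v\|_{L^\infty_{[0,t]}L^r}^{\beta_r})$ with $\beta_r = \prod_k \mu_k$.

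\textbf{Main obstacle.} The delicate point is bookkeeping. I need $\prod_k C_k$ and $\prod_k \mu_k$ to remain finite as $k \to \infty$. Both hinge on the fact that $q(s)-s$ is a fixed constant: this forces the ``excess'' ratio $\mu_k-1$ and the absorbed coefficient to decay like $1/r_k$, hence geometrically, which is summable/multiplicable. The threshold $r_1$ in the statement exactly guarantees $q(r_1) < r_1(d+2)/d$ so the interpolation is well-defined and the Young absorption is legitimate from the very first step; verifying the uniformity of the constants in $t$ and $r_k$ through the iteration is the only nontrivial calculation.
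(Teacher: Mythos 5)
Your proposal is correct and follows essentially the same route as the paper: both iterate Proposition \ref{D1mp} together with the Sobolev inequality, interpolate the loss term $L^{q(s)}$ between the current integrability level and the Sobolev level, absorb the high endpoint by Young's inequality, and close the iteration because the fixed loss $q(s)-s=\tfrac{2\bar\alpha}{2-\gamma}$ makes the per-step exponents equal to $1+O(1/r_k)$, whose product (the paper's $\beta_r=\prod_i(1+q_i)$, your $\prod_k\mu_k$) converges. The only differences are implementation details --- the paper climbs the ladder $r^n$ using the spatial endpoint $L^1_t L^{2^*r^n/2}_x$, while you climb $r\bigl(\tfrac{d+2}{d}\bigr)^k$ via the parabolic space--time embedding --- and in both cases the constants enter only through exponents decaying geometrically, so the bookkeeping closes for the same reason.
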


\begin{proof}
For  $r>1$, choose $\theta_n>0$ such that
\[
r^{n+1}+\delta=(1-\theta_n)r^n+\theta_n \frac{2^*}{2}r^{n+1},
\]
where $\delta=\frac{2\bar{\alpha}}{2-\gamma}$, that is $\theta_n=\frac{1-\frac 1 r + \frac{\delta}{r^{n+1}}}{\frac{2^*}{2}- \frac 1 r}>0$.
Set $\lambda_n=\frac{2^*}{2}\theta_n$ and $\beta_n=\frac{1-\theta_n}{1-\lambda_n}$.
Then, there exists $r_1>1$ such that for any $r\geq r_1$ and any $n\geq 1$, we have $\lambda_n<1$.
We fix a time $t$.
As in the previous proposition,
using a weighted Holder's inequality, we have
\[\begin{split}
&\int_{\mathbb{T}^d} \frac{1}{m^{r^{n+1}+\delta}}dx\leq
\left[ \left( \int_{\mathbb{T}^d} \frac{1}{m^{\frac{2^*}{2}r^{n+1}}}dx\right)^{2/2^*}\right]^{\lambda_n}  \left[\left( \int_{\mathbb{T}^d} \frac{1}{m^{r^n}}dx \right)^{\beta_n}\right]^{1-\lambda_n}\\&\leq
 \varepsilon  \lambda_n \left( \int_{\mathbb{T}^d} \frac{1}{m^{\frac{2^*}{2}r^{n+1}}}dx\right)^{2/2^*}+\frac{1}{\varepsilon^\tau}(1-\lambda_n)\left( \int_{\mathbb{T}^d} \frac{1}{m^{r^n}}dx \right)^{\beta_n},
\end{split}\]
where  $\varepsilon>0$ and $\tau>0$ is a suitable exponent. 
 On the other hand, Proposition \ref{D1mp} and Sobolev's inequality imply
\[
\int_{\mathbb{T}^d} \frac{1}{m^{r^{n+1}}(x,t)}dx+\int_0^t\left( \int_{\mathbb{T}^d} \frac{1}{m^{\frac{2^*}{2}r^{n+1}}(x,s)}dx\right)^{2/2^*} ds\leq C_{r^{n+1}}+C_{r^{n+1}}\int_0^t \int_{\mathbb{T}^d} \frac{1}{m^{r^{n+1}+\delta}(x,s)}dxds.
\]
From these two inequalities, we conclude:
\[
\int_{\mathbb{T}^d} \frac{1}{m^{r^{n+1}}(x,t)}dx\leq C_{r^{n+1}}+C_{r^{n+1}} \int_0^t \left( \int_{\mathbb{T}^d} \frac{1}{m^{r^n}(x,s)}dx \right)^{\beta_n}ds.
\]
Define $A_n(t)=\max_{[0,t]} \int_{\mathbb{T}^d} \frac{1}{m^{r^{n}}(x, \cdot)}dx$.
From the above estimate,
\[
1+A_{n+1}(t)\leq \max\{1,t\} C_{n} (1+A_n(t))^{\beta_n},
\]
where $C_n=O(r^{nk}),$ for some $k>1.$ Proceeding inductively, we get
\[
(1+A_{n+1}(t))^{\frac{1}{\beta_1\cdot \ldots \cdot \beta_n}} \leq C_t^{\sum_{i=1}^n \frac{1}{\beta_1\cdot\ldots\cdot\beta_i}}r^{\sum_{i=1}^n \frac{ik}{\beta_1\cdot\ldots\cdot\beta_i}}(1+A_1).
\]
Since 
\[
\beta_n =\frac{1-\theta_n}{1-\lambda_n}=r\left(1+ \frac{(\frac{2^*}{2} r -1)\delta}{r^{n+1}(\frac{2^*}{2}- 1-\frac{2^*}{2} \frac{\delta}{r^{n}})}\right):=r(1+q_n),
\]
where $q_n=O(r^{-n})>0$, 
the series  $\sum_{i=1}^{\infty} \frac{ik}{\beta_1\cdot\ldots\cdot\beta_i},$ $\sum_{i=1}^{\infty} \frac{1}{\beta_1\cdot\ldots\cdot\beta_i}$, and the infinite product $\prod_{i=1}^{\infty} (1+q_i)$ converge. From this, we
obtain
\[
\left\|\frac 1 m\right\|_{L^{\infty}([0,t],L^{r^{n+1}} (\Tt^d))}\leq C_t\left(1+ \left\|\frac 1 m\right\|^{\beta_r}_{L^{\infty}([0,t],L^{r} (\Tt^d))}\right),
\]
for some constants $C_t>0$ and $\beta_r=\prod_{i=1}^{\infty} (1+q_i)>1$ that do not depend on the solution. Sending $n\to \infty$ yields the result.
\end{proof}

The results of Theorem \ref{mpshort}, Proposition \ref{1minf} and Corollary \ref{Dmpshort} prove the following:
\begin{teo}\label{1minf2}
Under Assumptions \ref{psi}-\ref{H0sub}, there exist a time $T_0>0$ and  constants $C=C(\gamma, \alpha)>0,$ such that for any $C^\infty$ solution $(u, m)$ to \eqref{maineq}:
\[
\left\|\frac 1 {m}\right\|_{L^{\infty}([0, T_0]\times\td)}\leq C.
\]
\end{teo}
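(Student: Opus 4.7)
The plan is to combine the previous two key estimates directly: Theorem \ref{mpshort} yields an $L^r$-in-space bound on $1/m$ over a small time interval, while Proposition \ref{1minf} upgrades any such uniform $L^r$ bound into an $L^\infty$ bound. Concretely, I would first fix an exponent $r \geq \max(r_0, r_1)$, where $r_0$ is the threshold from Theorem \ref{mpshort} and $r_1$ is the one from Proposition \ref{1minf}. With this $r$, Theorem \ref{mpshort} furnishes a time $t_1 = t_1(r) > 0$ and constants $C, \delta > 0$ (depending only on $r$, $\gamma$, and $\alpha$) such that
\[
\int_{\td} \frac{1}{m^r(x,t)}\, dx \;\leq\; C\!\left[1 + \frac{1}{(t_1 - t)^{\delta}}\right], \quad \forall\, t < t_1.
\]
I would then set $T_0 := t_1(r)/2$. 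On $[0, T_0]$ the singular factor $(t_1-t)^{-\delta}$ is controlled by a constant $C'$, so that $\|1/m\|_{L^\infty([0,T_0], L^r(\td))} \leq (C')^{1/r}$, uniformly over $C^\infty$ solutions.

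Next, since $r \geq r_1$, Proposition \ref{1minf} applies on $[0, T_0]$ and gives
\[
\left\|\frac{1}{m}\right\|_{L^\infty([0, T_0] \times \td)} \;\leq\; C_{T_0}\!\left(1 + \left\|\frac{1}{m}\right\|^{\beta_r}_{L^\infty([0, T_0], L^r(\td))}\right),
\]
and by the previous step the right-hand side is bounded by a constant depending only on $\gamma$ and $\alpha$ (through the fixed choice of $r$ and hence of $T_0$). This is precisely the claimed uniform bound.

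The genuinely hard analytic work has already been carried out in Theorem \ref{mpshort} (the Gronwall-type blow-up argument controlling $\int m^{-r}\,dx$) and in Proposition \ref{1minf} (the Moser-style iteration), so the only remaining obstacle at this final stage is bookkeeping: ensuring that the chosen $r$ is admissible for both previous statements (handled by taking the maximum of $r_0$ and $r_1$), that $T_0$ is strictly below $t_1(r)$ so the singular factor stays bounded, and that every resulting constant depends only on $\gamma$ and $\alpha$. Corollary \ref{Dmpshort} is not invoked directly in this combination step — it is used implicitly inside the proof of Proposition \ref{1minf} to close the Moser iteration.
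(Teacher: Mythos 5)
Your argument is correct and is precisely the combination the paper intends: the paper itself gives no separate proof, merely asserting that Theorem \ref{mpshort}, Proposition \ref{1minf} and Corollary \ref{Dmpshort} together yield the result, and your proposal (fixing $r\geq\max(r_0,r_1)$, taking $T_0=t_1(r)/2$ to tame the singular factor, and then feeding the resulting uniform $L^\infty([0,T_0],L^r)$ bound into Proposition \ref{1minf}) spells out exactly that combination. Your remark on the role of Corollary \ref{Dmpshort} is also consistent with how the paper uses it.
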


\section{Short-time regularity of the value function}
\label{srvf}

Building upon the results in the previous Section, we prove next further
regularity for the solutions of \eqref{maineq}.

\begin{lemma}\label{FokPlank}
Let $w:\Tt^d\times [0,T]\to \Rr$ be a non-negative solution
of the Fokker-Planck equation:
 \begin{equation}
\label{Fok-Plank}
w_t(x,t)-\Delta w-\div(g(x,t)w(x,t))=0,
\end{equation}
with $w(x, 0)=m_0(x)$.
Assume that for some $p_0> d$, every $r>1$ and some constants $C_r>0$
the drift $g$ satisfies
 $\left\|g\right\|_{L^r([0, T_0], L^{p_0}(\td))} \leq C_{r}$. Then, there exist constants $C_q$ such that $\left\|w\right\|_{L^{\infty}([0, T_0], L^{q}(\td))} \leq C_{q},$ for all $q>1.$
\end{lemma}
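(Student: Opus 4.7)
The plan is a Moser-type energy estimate for \eqref{Fok-Plank}, combined with Sobolev embedding, a H\"older--Young interpolation to absorb the drift, and Gronwall in time.

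Fix $q>1$. Multiplying \eqref{Fok-Plank} by $qw^{q-1}$, integrating by parts on $\td$ and using $w^{q-2}|Dw|^2=\tfrac{4}{q^2}|D(w^{q/2})|^2$, the drift contribution becomes $2(q-1)\int w^{q/2}\,g\cdot D(w^{q/2})\,dx$, which a first Young step bounds to produce
\begin{equation*}
\frac{d}{dt}\int_{\td}w^q\,dx + c_q\int_{\td}|D(w^{q/2})|^2\,dx \leq C_q\int_{\td}|g|^2 w^q\,dx.
\end{equation*}
To absorb the right-hand side, H\"older with conjugate exponents $p_0/2$ and $p_0/(p_0-2)$ gives $\int |g|^2w^q\,dx \leq \|g\|_{L^{p_0}}^2\,\|w^{q/2}\|_{L^{p_*}}^2$, where $p_*=2p_0/(p_0-2)$. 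Because $p_0>d$, $p_*$ lies strictly between $2$ and the Sobolev conjugate $2^*$, so the Lyapunov interpolation inequality yields
\[
\|w^{q/2}\|_{L^{p_*}}^2 \leq \|w^{q/2}\|_{L^2}^{2\theta}\,\|w^{q/2}\|_{L^{2^*}}^{2(1-\theta)},\qquad \theta=\frac{p_0-d}{p_0}\in(0,1).
\]
Combining with Sobolev's inequality $\|w^{q/2}\|_{L^{2^*}}^2 \leq C(\|D(w^{q/2})\|_{L^2}^2 + \|w^{q/2}\|_{L^2}^2)$ and a Young step with small parameter $\varepsilon>0$, the $D(w^{q/2})$-term is absorbed on the left, leaving the scalar ODI
\[
\frac{d}{dt}\int_{\td}w^q\,dx \leq C_q\bigl(1+\|g(\cdot,t)\|_{L^{p_0}}^{2/\theta}\bigr)\int_{\td}w^q\,dx.
\]

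Since $m_0\in C^\infty(\td)$, $\int m_0^q\,dx\leq C_q$, and Gronwall produces $\|w(\cdot,t)\|_{L^q}^q \leq C_q \exp(C_q\int_0^t(1+\|g\|_{L^{p_0}}^{2/\theta})\,ds)$. Crucially, $\theta$ depends only on $p_0$ and $d$, so $2/\theta$ is a \emph{fixed} positive number independent of $q$. Picking any $r>2/\theta=2p_0/(p_0-d)$ and using H\"older in time with the hypothesis $\|g\|_{L^r([0,T_0],L^{p_0}(\td))}\leq C_r$ bounds the exponential uniformly on $[0,T_0]$. This delivers the claimed $\|w\|_{L^\infty([0,T_0],L^q(\td))}\leq C_q$ for every $q>1$.

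The only delicate point is the subcriticality of the drift: it is precisely the strict inequality $p_0>d$ that keeps $\theta>0$ and makes the exponent $2/\theta$ finite, allowing the time-integrability of $\|g(\cdot,t)\|_{L^{p_0}}$ to be matched against a sufficiently high power by choosing $r$ large. Without $p_0>d$, the Young step would fail to close the energy inequality, and a more refined tool (for instance, heat semigroup Duhamel iteration) would be required.
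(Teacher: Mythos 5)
Your argument is correct, and its skeleton coincides with the paper's: the same energy identity from testing with $qw^{q-1}$, H\"older in space with exponents $p_0/2$ and its conjugate, Sobolev embedding of $w^{q/2}$ into $L^{2^*}$, and an interpolation whose viability rests exactly on the strict inequality $p_0>d$. Where you diverge is in how the estimate is closed. The paper first integrates in time, interpolates $\|w^q\|_{L^{p_1'}}$ between $L^1$ (using the conservation $\|w(\cdot,t)\|_{L^1}=1$) and $L^{2^*/2}$ to get an exponent $\theta<1$, and then absorbs $\int_0^t\|w^q\|_{L^{2^*/2}}^{r'\theta}$ into the left-hand side by choosing $r$ so large that $r'\theta<1$; this is an absorption in the \emph{time} integral. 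You instead keep a pointwise-in-time differential inequality, interpolate $\|w^{q/2}\|_{L^{p_*}}$ between $L^2$ and $L^{2^*}$, absorb the gradient term by Young at each fixed $t$, and close with Gronwall. Your route buys two small things: you never need the normalization $\|w(\cdot,t)\|_{L^1}=1$ (only $\int m_0^q<\infty$), and the final dependence on the data is an explicit exponential of $\int_0^{T_0}(1+\|g\|_{L^{p_0}}^{2/\theta})\,dt$ with $2/\theta=2p_0/(p_0-d)$ independent of $q$, which makes transparent why the hypothesis ``for every $r>1$'' is more than enough. The paper's version, by contrast, yields the intermediate space-time bound $\int_0^t\|w^q\|_{L^{2^*/2}}\,ds\leq C_q$ as a byproduct. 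Both are valid; yours is, if anything, slightly more self-contained.
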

\begin{proof}
Multiplying \eqref{Fok-Plank} by $qw^{q-1}$ and integrating by parts, we get
\begin{align*}
\int_{\td}w^{q}(x,t)dx-&\int_{\td}w^{q}(x,0)dx+
q(q-1)\int_0^t\int_{\td}
w^{q-2}(x,s)|Dw(x,s)|^2
dx ds\\&=q(q-1)\int_0^t\int_{\td}w^{q-1} g\cdot Dwdxds.
\end{align*}
From this, using Cauchy inequality, we have the estimate
\begin{equation}\label{maininqw}
\int_{\td}w^{q}(x,t)dx+\int_0^t\int_{\td}
|Dw^{\frac q 2}(x,s)|^2
dx ds\leq C_q+C_q\int_0^t\int_{\td}|g|^2w^{q}dxds.
\end{equation}
The previous bound together with Sobolev's inequality implies
\[\begin{split}
&\int_0^t\|w^q(\cdot, s)\|_{L^{\frac {2^*} 2}({\td})}ds\leq C_q\int_0^t\int_{\td}(1+|g|^2)w^{q}dxds \\&\leq \tilde C_q\left(1+\int_0^t\|1+|g(\cdot, s)|^2\|^r_{L^{p_1}({\td})}ds+\int_0^t\|w^q(\cdot, s)\|^{r'}_{L^{p_1'}({\td})}ds\right),
\end{split}
\]
for any $r>1$, where $p_1=\frac {p_0} 2$ and the conjugate powers $r',p_1'$ satisfy  $\frac 1 r+ \frac 1 {r'}=1$, $\frac 1 {p_1}+ \frac 1 {p_1'}=1$. Recall that $\|w(\cdot, t)\|_{L^{1}({\td})}=1$. Moreover, because $p_1> \frac d 2$, we have $p_1'< \frac {2^*} 2$. Therefore,  by interpolation,
\[
\|w(\cdot, s)\|_{L^{qp_1'}({\td})}\leq \|w(\cdot, s)\|^{\theta}_{L^{q\frac {2^*} 2}({\td})} \|w(\cdot, s)\|^{1-\theta}_{L^{1}({\td})}=\|w(\cdot, s)\|^{\theta}_{L^{q\frac {2^*} 2}({\td})},
\]
for some $\theta<1$. Hence, 
$
\|w^q(\cdot, s)\|_{L^{p_1'}({\td})}\leq \|w^q(\cdot, s)\|^{\theta}_{L^{\frac {2^*} 2}({\td})}
$. By combining these bounds, we have the estimate
\[
\int_0^t\|w^q(\cdot, s)\|_{L^{\frac {2^*} 2}({\td})}ds\leq C_q\left(\int_0^t\|1+|g(\cdot, s)|^2\|^r_{L^{p_1}({\td})}ds+\int_0^t\|w^q(\cdot, s)\|^{r'\theta}_{L^{\frac {2^*} 2}({\td})}ds\right).
\]
Finally, by
choosing $r$ large enough so that $r'\theta<1$, we obtain $\int_0^t\|w^q(\cdot, s)\|_{L^{\frac {2^*} 2}({\td})}ds\leq C_q$. To end the proof, we observe that, from \eqref{maininqw},
it follows that, for any  $q>1$,
there exists $C_q$ such that 
$\int_{\td}w^q(x,t)dx\leq C_q$, for any
 $t\in [0,T]$. 
\end{proof}

The next Lemma uses the Gagliardo Niremberg theorem to obtain additional 
regularity. This is a critical point where we use the hypothesis that 
$H$ is subquadratic. 
\begin{lemma}\label{uLrp}
Under Assumptions \ref{psi}-\ref{H0sub}, there exist a time $T_0>0$ and  constants $C_p, C_{r,p}=C(\gamma, \alpha,r, p)>0,$ such that for any $C^\infty$ solution $(u, m)$ to \eqref{maineq}:
\[
\left\|u_t\right\|_{L^r(0, T_0; L^p(\td))}+ 
\left\|D^2 u\right\|_{L^r(0, T_0; L^p(\td))} \leq C_{r,p},\qquad \forall\, 1<r,p<+\infty.
\]
\[
\left\|m\right\|_{L^{\infty}(0, T_0; L^p(\td))}\leq C_p, 
\]
Furthermore,
\[
\left\|D u\right\|_{L^{p\gamma}(\td\times [0,T_0])}\leq C,\qquad \forall p>1.
\]
\end{lemma}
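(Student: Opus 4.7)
The strategy is to bootstrap between parabolic $L^r(L^p)$-maximal regularity for the Hamilton-Jacobi equation and Lemma \ref{FokPlank} for the Fokker-Planck equation, with Gagliardo-Nirenberg interpolation as the bridge and the subquadratic condition $\gamma<2$ as the closure mechanism. The starting a priori bounds are: $m\geq c_0>0$ on $\td\times[0,T_0]$ (Theorem \ref{1minf2}), $\|u\|_{L^\infty}\leq C$ (Corollary \ref{linf}), $Du\in L^\gamma(\td\times[0,T_0])$ (Corollary \ref{BBB} plus the lower bound on $m$), and $m\in L^\infty(L^{1+\alpha})$ (Proposition \ref{uuper}). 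The lower bound on $m$, combined with Assumptions \ref{L0sup} and \ref{dhp}, gives the pointwise controls
\[
\left|m^\alpha H_0\!\left(x,\tfrac{Du}{m^\alpha}\right)+b\cdot Du\right|\leq C\bigl(1+m^\alpha+|Du|^\gamma\bigr),\qquad \left|D_pH_0\!\left(x,\tfrac{Du}{m^\alpha}\right)+b\right|\leq C\bigl(1+|Du|^{\gamma-1}\bigr).
\]

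For the HJB step I would read the first equation of \eqref{maineq} as $-u_t-\Delta u=F$ with $F:=V(x,m)-b\cdot Du-m^\alpha H_0(x,Du/m^\alpha)$, and apply the Calder\'on-Zygmund parabolic estimate to get
\[
\|u_t\|_{L^r(L^p)}+\|D^2u\|_{L^r(L^p)}\leq C_{r,p}\bigl(1+\|m\|^{\alpha}_{L^{r\alpha}(L^{p\alpha})}+\bigl\|\,|Du|^\gamma\bigr\|_{L^r(L^p)}+\|\Psi\|_{W^{2,p}(\td)}\bigr).
\]
The critical Gagliardo-Nirenberg inequality on $\td$,
\[
\|Du(\cdot,t)\|_{L^{p\gamma}(\td)}\leq C\,\|D^2u(\cdot,t)\|_{L^p(\td)}^{\lambda}\,\|u(\cdot,t)\|_{L^\infty(\td)}^{1-\lambda},\qquad \lambda=\frac{\tfrac{1}{p\gamma}-\tfrac{1}{d}}{\tfrac{1}{p}-\tfrac{2}{d}},
\]
yields $\gamma\lambda\to\gamma/2<1$ as $p\to\infty$. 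Thus for $p$ large enough $\gamma\lambda<1$, and after raising to the $\gamma$-th power, taking the $L^r$ norm in time and applying Young's inequality, the $|Du|^\gamma$ contribution is absorbed into the left-hand side.

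For the Fokker-Planck step, once $D^2u\in L^r(L^p)$ is available, Sobolev embedding gives $Du\in L^r(L^{p_0})$ for some $p_0>d$ and every $r<\infty$. Consequently the drift $g:=D_pH_0(x,Du/m^\alpha)+b$ lies in $L^r(L^{p_0})$, and Lemma \ref{FokPlank} delivers $m\in L^\infty(L^q)$ for every $q>1$. I would then iterate: starting from $m\in L^\infty(L^{1+\alpha})$, each round (HJB~$\to$~FP) strictly enlarges the admissible range of exponents, and finitely many rounds cover every pair $(r,p)$ with $1<r,p<\infty$. The final assertion $\|Du\|_{L^{p\gamma}(\td\times[0,T_0])}\leq C_p$ then follows from Gagliardo-Nirenberg applied to the final $D^2u$ bound.

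The principal obstacle is the joint calibration of the Gagliardo-Nirenberg exponent and the time integration so that $\gamma\lambda<1$ holds together with a compatible Young inequality in the $r$-variable. This is precisely where Assumption \ref{H0sub} ($\gamma<2$) is indispensable, as flagged in the paragraph preceding the lemma: at the borderline $\gamma=2$ one has $\lambda=1/2$ exactly and the absorption fails, so the whole bootstrap stalls.
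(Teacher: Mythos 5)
Your overall architecture coincides with the paper's: Gagliardo--Nirenberg interpolation of $Du$ between $D^2u$ and the $L^\infty$ bound on $u$ from Corollary \ref{linf}, parabolic $L^r(L^p)$ regularity for the Hamilton--Jacobi equation, absorption of the $|Du|^\gamma$ term using $\gamma<2$, and then feeding the resulting drift bound into Lemma \ref{FokPlank} to upgrade $m$ to $L^\infty(L^q)$ for all $q$. However, your interpolation inequality is false as stated. For $\|Du\|_{L^q}\leq C\|D^2u\|_{L^p}^{\lambda}\|u\|_{L^\infty}^{1-\lambda}$ the Gagliardo--Nirenberg theorem requires $\lambda\geq \tfrac12$ (first derivative interpolated against second derivatives), and your exponent $\lambda=\bigl(\tfrac{1}{p\gamma}-\tfrac1d\bigr)/\bigl(\tfrac1p-\tfrac2d\bigr)$ violates this precisely because $\gamma<2$: for $p$ large both numerator and denominator are negative, and $\lambda\geq\tfrac12$ is equivalent to $\gamma\geq 2$. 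The inequality with $\lambda<\tfrac12$ fails already for $u=\sin(kx_1)$, where it would force $k\leq Ck^{2\lambda}$ as $k\to\infty$. So the step on which your whole absorption rests does not hold in the form you wrote it.

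The repair is exactly the paper's move: take the admissible endpoint $\lambda=\tfrac12$, which gives $\|Du(\cdot,t)\|_{L^{2p}(\td)}\leq C\|D^2u(\cdot,t)\|_{L^p(\td)}^{1/2}\|u(\cdot,t)\|_{L^\infty(\td)}^{1/2}$, and then use $L^{2p}(\td)\hookrightarrow L^{p\gamma}(\td)$ since $p\gamma<2p$ and $\td$ has finite measure. The subquadraticity then enters not through $\lambda$ but through the resulting power $\gamma/2<1$ on $\|D^2u\|_{L^r(L^p)}$, and the absorption works for every $p>1$, not only for large $p$. Your closing Fokker--Planck step is sound, but the ``finitely many rounds'' are unnecessary: Proposition \ref{uuper} gives $m^\alpha\in L^\infty(0,T_0;L^{p_0}(\td))$ for some $p_0>\tfrac d2$ (this is where Assumption \ref{alf} is used), hence $Du\in L^r(0,T_0;L^{p_0\gamma}(\td))$, hence the drift, controlled by $C(1+|Du|^{\gamma-1})$, lies in $L^r(0,T_0;L^{p_0\gamma/(\gamma-1)}(\td))$ with $p_0\gamma/(\gamma-1)>d$; a single application of Lemma \ref{FokPlank} then yields $m\in L^\infty(0,T_0;L^q(\td))$ for all $q>1$ and closes the estimate for all $r,p$.
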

\begin{proof}
We choose $T_0$ as in Theorem \ref{1minf2}. By the Gagliardo-Nirenberg interpolation inequality and Corollary \ref{linf},
taking into account that $\gamma<2$, 
\[
\|Du(\cdot, t)\|_{L^{p\gamma}(\td)}\leq c_{p, d} \|D^2u(\cdot, t)\|_{L^p(\td)}^{\frac 1 2}\|u(\cdot, t)\|_{L^{\infty}(\td)}^{\frac 1 2}\leq C\|D^2u(\cdot, t)\|_{L^p(\td)}^{\frac 1 2}.
\]
For this reason, we have the bound
\[\begin{split}
&\left\||D u|^{\gamma}\right\|_{L^r(0, T_0; L^p(\td))}=\left(\int_0^{T_0}\|Du(\cdot, t)\|_{L^{p\gamma}(\td)}^{r\gamma}dt\right)^{\frac 1 r}\leq \\&C \left(\int_0^{T_0}\|D^2u(\cdot, t)\|_{L^{p}(\td)}^{\frac{r\gamma}{2}}dt\right)^{\frac 1 r}\leq C \left\|D^2 u\right\|^{\frac{\gamma}{2}}_{L^r(0, T_0; L^p(\td))},
\end{split}
\]
where we used again that $\frac{\gamma}{2}< 1$.
Then, from Theorem \ref{1minf2} and standard regularity results for the heat equation (see, for instance, \cite{Lad}), we have 
\begin{align*}
\left\|u_t\right\|_{L^r(0, T_0; L^p(\td))}+ 
\left\|D^2 u\right\|_{L^r(0, T_0; L^p(\td))} &\leq C\left\||D u|^{\gamma}\right\|_{L^r(0, T_0; L^p(\td))} + C\left\|m^{\alpha}\right\|_{L^r(0, T_0; L^p(\td))}  \\& \leq C\left\|D^2 u\right\|^{\frac{\gamma}{2}}_{L^r(0, T_0; L^p(\td))}+ C\left\|m^{\alpha}\right\|_{L^r(0, T_0; L^p(\td))}.
\end{align*}
Since $\frac{\gamma}{2}< 1$, we obtain \[
\left\|u_t\right\|_{L^r(0, T_0; L^p(\td))}
C\left\|D^2 u\right\|_{L^r(0, T_0; L^p(\td))} \leq C\left\|m^{\alpha}\right\|_{L^r(0, T_0; L^p(\td))}.
\]

The above arguments also imply  $$\left\|D u\right\|_{L^{r\gamma}(0, T_0; L^{p\gamma}(\td))}\leq C\left\|D^2 u\right\|^{\frac 1 2}_{L^r(0, T_0; L^p(\td))} \leq C+C\left\|m^{\alpha}\right\|^{\frac 1 2}_{L^r(0, T_0; L^p(\td))}\leq C_{r,p},$$ for all $r, p>1$.

We recall that $\alpha<\frac{d-2}{d}$. By Proposition \ref{uuper}, we have $\left\|m^{\alpha+1}\right\|_{L^{\infty}(0, T_0; L^1(\td))}\leq C$. 
Hence, for some $p_0>\frac d 2$,
$\left\|m^{\alpha}\right\|_{L^{\infty}(0, T_0; L^{p_0}(\td))}\leq C$. Consequently, $\left\|D u\right\|_{L^{r}(0, T_0; L^{p_0\gamma}(\td))}\leq C_r$, for any $r>1.$ Note that $p_0\gamma/ (\gamma-1)>\frac d 2 \gamma/ (\gamma-1)>d$. Therefore, $D_pH$ is bounded in $L^{r}(0, T_0; L^{p}(\td))$ for some $p>d$ and any $r>1$. Finally, Lemma \ref{FokPlank} implies that $m$ is bounded in $L^{\infty}(0, T_0; L^{q}(\td))$ for any $q>1.$
\end{proof}

\begin{lemma}\label{ulip}
Under Assumptions \ref{psi}-\ref{H0sub}, there exist a time $T_0>0$ and a constant $C>0,$ such that, for any $C^\infty$ solution $(u, m)$ to \eqref{maineq}, we have $\|Du\|_{L^{\infty}(\td\times [0,T_0])} \leq C.$
\end{lemma}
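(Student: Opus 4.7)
The plan is to combine the parabolic regularity already obtained in Lemma \ref{uLrp} with a parabolic Morrey/Sobolev embedding. Concretely, by Lemma \ref{uLrp}, for every finite $r,p>1$ we have
\[
\|u_t\|_{L^r(0,T_0;L^p(\td))}+\|D^2 u\|_{L^r(0,T_0;L^p(\td))}\leq C_{r,p}.
\]
Choosing $r=p$ very large, in particular $p>d+2$, places $u$ in the anisotropic parabolic Sobolev space $W^{2,1}_p(\td\times[0,T_0])$ with an a priori bound independent of the solution.

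The key step is then to invoke the standard parabolic embedding (Ladyzhenskaya--Solonnikov--Uraltseva, cf.\ the reference \cite{Lad} already used in the previous lemma)
\[
W^{2,1}_p(\td\times[0,T_0])\hookrightarrow C^{1+\beta,\,\frac{1+\beta}{2}}(\td\times[0,T_0]),
\qquad \beta=1-\frac{d+2}{p},
\]
valid as soon as $p>d+2$. Applying this embedding to $u$ and using the bound above yields
\[
\|Du\|_{L^\infty(\td\times[0,T_0])}\leq \|u\|_{C^{1+\beta,\,\frac{1+\beta}{2}}(\td\times[0,T_0])}\leq C\bigl(\|u_t\|_{L^p(L^p)}+\|D^2u\|_{L^p(L^p)}+\|u\|_{L^p(L^p)}\bigr),
\]
and the right-hand side is controlled by Lemma \ref{uLrp} together with Corollary \ref{linf}. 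This gives the claimed estimate, with $C$ independent of the particular $C^\infty$ solution.

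The only potentially delicate point is checking that the embedding constant in the parabolic Morrey inequality depends only on $d$, $p$, and $T_0$, and not on the solution itself. This is standard once $T_0$ is fixed; the terminal data $\Psi\in C^\infty$ enters only through the trace term, which is absorbed into the constant by Assumption \ref{psi}. I do not expect any serious obstacle here: all a priori bounds needed to run the embedding have already been established in Section \ref{estim} and in Theorem \ref{1minf2}, and the subquadratic hypothesis $\gamma<2$ that was essential in Lemma \ref{uLrp} (so as to close the bootstrap $\|Du\|\lesssim \|D^2u\|^{1/2}$) has already done its work. Hence the statement follows by a direct application of parabolic Sobolev embedding to the estimates of the previous lemma.
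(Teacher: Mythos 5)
Your proof is correct, and it reaches the conclusion by a somewhat different route than the paper. The paper's proof goes back to the equation: using Theorem \ref{1minf2} and Lemma \ref{uLrp} it rewrites the Hamilton--Jacobi equation as a backward heat equation $u_t+\Delta u=f$ with right-hand side $f=m^{\alpha}H_0(x,Du/m^{\alpha})+b\cdot Du-V$ bounded in $L^r(\td\times[0,T_0])$ for every $r>1$, and then cites the gradient estimate of \cite{GPM2} (essentially a Duhamel/heat-semigroup argument: $\|De^{t\Delta}\|_{L^r\to L^\infty}\lesssim t^{-1/2-d/(2r)}$ is integrable for $r$ large). You instead bypass the equation entirely and apply the anisotropic parabolic embedding $W^{2,1}_p(\td\times[0,T_0])\hookrightarrow C^{1+\beta,(1+\beta)/2}$, $p>d+2$, directly to the bounds on $u_t$, $D^2u$, $Du$, and $u$ already secured in Lemma \ref{uLrp} and Corollary \ref{linf}. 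Both arguments rest on the same substance --- the subquadratic hypothesis closing the bootstrap in Lemma \ref{uLrp} --- and both constants are uniform over solutions; yours is arguably more self-contained since it replaces the external reference to \cite{GPM2} by a standard embedding from \cite{Lad}, at the cost of requiring $r=p>d+2$ in a single space-time Lebesgue scale (which Lemma \ref{uLrp} does provide). Your closing remark about the terminal data entering through a ``trace term'' is harmless but not really needed: on $\td\times[0,T_0]$ there is no lateral boundary, and $\Psi$ has already been absorbed into the $W^{2,1}_p$ bound of Lemma \ref{uLrp}.
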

\begin{proof}
From Theorem \ref{1minf2} and Lemma \ref{uLrp}, it follows that the equation for $u$ can be written as
\[
\begin{cases}
u_t+\Delta u=f,\\
u(x,T_0)=\Psi(x),
\end{cases}
\]
where $f\in L^r(\td\times [0,T_0])$ for every $r>1.$ From this,
reasoning as in \cite{GPM2}, we obtain
 \[
 \|Du\|_{L^{\infty}(\td\times [0,T_0])} \leq C.
 \]

\end{proof}

\begin{lemma}\label{mlip}
Under Assumptions \ref{psi}-\ref{H0sub}, there exist a time $T_0>0$ and a constant $C>0$ such that, for any $C^\infty$ solution $(u, m)$ to \eqref{maineq}, we have $\|m\|_{L^{\infty}(\td\times [0,T_0])}, \|Dm\|_{L^{\infty}(\td\times [0,T_0])} \leq C.$
\end{lemma}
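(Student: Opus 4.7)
The plan is to view the Fokker--Planck equation for $m$ as a linear divergence-form parabolic equation with bounded drift, and then bootstrap: first an $L^\infty$ bound on $m$, then H\"older regularity of $m$ via De~Giorgi--Nash--Moser, and finally an $L^\infty$ bound on $Dm$ via parabolic Schauder.

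Combining $\|Du\|_{L^\infty(\td\times[0,T_0])}\leq C$ from Lemma~\ref{ulip} with $\|1/m\|_{L^\infty(\td\times[0,T_0])}\leq C$ from Theorem~\ref{1minf2} and Assumption~\ref{dhp}, the effective drift $g:=D_pH_0(x,Du/m^\alpha)+b$ is bounded on $\td\times[0,T_0]$, and the equation for $m$ reduces to
\[
m_t-\Delta m-\div(gm)=0,\qquad m(\cdot,0)=m_0\in C^\infty.
\]
A standard Moser iteration applied to this linear divergence-form equation (equivalently, Duhamel's formula combined with the integrable heat-kernel bound $\|De^{s\Delta}\|_{L^\infty\to L^\infty}\lesssim s^{-1/2}$) then yields $\|m\|_{L^\infty(\td\times[0,T_0])}\leq C$.

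To pass from here to an $L^\infty$ bound on $Dm$, I would first invoke De~Giorgi--Nash--Moser H\"older estimates for linear divergence-form parabolic equations with $L^\infty$ drift, obtaining $m\in C^{\sigma,\sigma/2}(\td\times[0,T_0])$ for some $\sigma\in(0,1)$. Independently, Lemma~\ref{uLrp} provides $u_t,D^2u\in L^r(0,T_0;L^p(\td))$ for every $r,p\in(1,\infty)$; taking $r,p$ large enough, parabolic Sobolev embedding supplies $Du\in C^{\sigma',\sigma'/2}$ for some $\sigma'>0$. Using $m\geq 1/C>0$ and the smoothness of $D_pH_0$ (Remark~\ref{h0conv}), I would deduce $g\in C^{\mu,\mu/2}$, hence $gm\in C^{\mu,\mu/2}$, for some $\mu\in(0,1)$. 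Classical parabolic Schauder theory applied to $m_t-\Delta m=\div(gm)$ with $C^\infty$ initial data then delivers $Dm\in C^{\mu,\mu/2}(\td\times[0,T_0])$, and in particular $\|Dm\|_{L^\infty(\td\times[0,T_0])}\leq C$.

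The critical step is the H\"older regularity of $m$: since $g$ is only a priori bounded and $\div g$ involves $D^2u$ (which lies in $L^rL^p$ for all $r,p$ but not in $L^\infty$), direct differentiation of the equation to estimate $Dm$ is not available. The argument must therefore pass through De~Giorgi--Nash--Moser theory, which is tailored precisely to coefficients of this regularity, before the Schauder bootstrap can conclude; everything else is a routine consequence of the quantitative bounds already collected.
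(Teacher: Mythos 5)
Your argument is correct, but it follows a genuinely different route from the paper. The paper expands the divergence in the Fokker--Planck equation to write $-m_t+\Delta m=a\cdot Dm+cm$ with $a\in L^\infty$ and $c\in L^p$ for all $p$ (the coefficient $c$ absorbs the $D^2u$ terms, which are only in $L^r(L^p)$ by Lemma \ref{uLrp}), then performs the logarithmic change of variable $w=\ln m$ and applies the nonlinear adjoint method, as in \cite{GPatVrt}, to bound $\|w\|_{L^\infty}$ and $\|Dw\|_{L^\infty}$ directly; this yields both conclusions in one stroke and avoids ever needing H\"older continuity of $Du$ or of $m$. You instead keep the equation in divergence form $m_t-\Delta m=\div(gm)$ with the bounded drift $g=D_pH_0(x,Du/m^{\alpha})+b$, and run the classical linear chain: Duhamel/Moser for $\|m\|_{L^\infty}$, De~Giorgi--Nash--Moser for $m\in C^{\sigma,\sigma/2}$, parabolic Sobolev embedding of Lemma \ref{uLrp} for $Du\in C^{\sigma',\sigma'/2}$, and divergence-form Schauder for $Dm\in L^\infty$. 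Each link is legitimate here: the drift is indeed bounded thanks to Lemma \ref{ulip}, Theorem \ref{1minf2} and Assumption \ref{dhp}; the torus has no lateral boundary and $m_0$ is smooth, so the DGNM and Schauder estimates hold up to $t=0$; and $m\geq 1/C>0$ makes the composition $D_pH_0(x,Du/m^{\alpha})$ H\"older once $Du$ and $m$ are. Your route is more standard and transparent, at the cost of invoking heavier off-the-shelf regularity theory and the intermediate H\"older continuity of $Du$; the paper's route is shorter given the adjoint-method machinery the authors already have in hand, and is insensitive to the low integrability of $\div g$. You correctly identified that a naive differentiation of the equation fails because $\div g$ contains $D^2u$, which is precisely the obstruction both proofs are designed to circumvent.
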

\begin{proof}
From the estimates  in Theorem \ref{1minf2} and Lemmas \ref{uLrp} and \ref{ulip},
 it follows that
 for suitable functions
 $a$, and $c$, bounded in $L^p(\td\times [0,T_0])$ for every $p>1$,
 the equation for $m$ can be written as
\[
\begin{cases}
-m_t(x,t)+\Delta m(x,t)=a(x,t)\cdot Dm(x,t)+c(x,t)m(x,t),\\
m(x,0)=m_0(x).
\end{cases}
\]
Let $w=\ln m$ then
\[
\begin{cases}
-w_t(x,t)+\Delta w(x,t) +|Dw|^2-a(x,t)\cdot Dw(x,t)=c(x,t),\\
w(x,0)=\ln m_0(x).
\end{cases}
\]
The adjoint method, applied as in \cite{GPatVrt}, yields  $\|w\|_{L^{\infty}(\td\times [0,T_0])}, \|Dw\|_{L^{\infty}(\td\times [0,T_0])} \leq C$. These estimates imply the result.
\end{proof}

\begin{teo}\label{bootstrapp}
Under Assumptions \ref{psi}-\ref{H0sub}, there exist a time $T_0>0$ and constants $C_{k,l,p}>0$,  for $k,l\in \mathbb{N},\, p>1$ such that, for any $C^\infty$ solution $(u, m)$ to \eqref{maineq}, 
we have
$\|D^k_tD^l_xu\|_{L^{p}(\td\times [0,T_0])} \leq C_{k,l,p}$.
\end{teo}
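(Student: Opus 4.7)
The plan is to run a standard parabolic bootstrap on the coupled system \eqref{maineq}, using as the base of the induction all the bounds established so far: the $L^\infty$ bounds on $u$, $Du$, $m$, $1/m$, $Dm$ from Theorem~\ref{1minf2} and Lemmas~\ref{ulip}--\ref{mlip}, together with the $L^r(L^p)$ estimates for $u_t$, $D^2u$ given by Lemma~\ref{uLrp}. The key structural observation is that, because $m$ stays strictly positive and uniformly bounded on $\td\times[0,T_0]$, the map $(m,p)\mapsto m^\alpha H_0(x,p/m^\alpha)$ is $C^\infty$ on the compact range attained by $(m,Du)$; consequently, any derivative of the nonlinear term is a smooth polynomial expression in $m$, $1/m$, $Du$, $Dm$, $D^2u$, and the data. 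This is what allows the nonlinearity to be absorbed into the forcing at each step.

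The first step is to differentiate the Hamilton--Jacobi equation with respect to $x_i$. Writing $v=u_{x_i}$ yields a linear parabolic equation of the form
\[
-v_t-\Delta v+A(x,t)\cdot Dv=G(x,t),
\]
where $A$ and $G$ are smooth functions of $x$, $m$, $1/m$, $Du$, and $Dm$. By the existing $L^\infty$ bounds, $A\in L^\infty$, and by Lemma~\ref{uLrp}, $G\in L^r(0,T_0;L^p(\td))$ for all finite $r,p$. The parabolic Calderón--Zygmund estimate (e.g.\ \cite{Lad}) then yields $D^2v, v_t\in L^r(L^p)$ for all $r,p$, i.e., $D^3u$ and $Du_t$ are controlled in every $L^r(L^p)$. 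The second step differentiates the Fokker--Planck equation with respect to $x_i$; setting $w=m_{x_i}$, the resulting equation is linear in $w$ with drift $D_pH_0(x,Du/m^\alpha)+b$ and a source involving $D^2u$, $Du$, $Dm$, and $1/m$. Since the source now lies in $L^r(L^p)$ for all $r,p$ by the previous step, Lemma~\ref{FokPlank} and standard parabolic regularity upgrade $m$ from $W^{1,\infty}$ to $W^{2,p}_x\cap W^{1,p}_t$ for every $p$.

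Feeding these improvements back into the equation for $v$ gives one more spatial derivative of $u$, and alternating the two procedures yields, by induction on $l$, that $D^l_x u$ and $D^l_x m$ lie in $L^r(0,T_0;L^p(\td))$ for every $l\in\mathbb{N}$ and every $r,p>1$. The mixed derivatives $D^k_t D^l_x u$ then follow from the equation itself: using \eqref{maineq}, each time derivative of $u$ can be rewritten as $\Delta u$ plus smooth combinations of $u$, $Du$, $m$, $V$, $b$, so a factor of $D_t$ is traded for two spatial derivatives plus derivatives of quantities already controlled inductively. Differentiating \eqref{maineq} in $t$ and using the same trick on $m_t$ (via the Fokker--Planck equation) closes the induction on $k+l$.

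The step I expect to be most delicate is the second one: keeping the coefficients and sources in the differentiated Fokker--Planck equation inside $L^r(L^p)$ for large $r,p$, because this requires $D_pH_0(x,Du/m^\alpha)$ and its $x$-derivatives to be handled carefully. This ultimately reduces to bounding polynomial expressions in $Du$, $D^2u$, $m$, $1/m$, and $Dm$, which are exactly the quantities provided by Theorem~\ref{1minf2} and Lemmas~\ref{uLrp}--\ref{mlip}. Once this is set up, everything else is a routine parabolic bootstrap that proceeds identically at every iteration, and the uniform constants $C_{k,l,p}$ depend only on $\gamma$, $\alpha$, the data, and $T_0$.
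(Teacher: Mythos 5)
Your proposal is correct and follows essentially the same route as the paper, which itself only sketches the argument: a parabolic bootstrap on the equations for $u$, $m$, and their derivatives, seeded by the bounds of Theorem~\ref{1minf2} and Lemmas~\ref{uLrp}, \ref{ulip}, and \ref{mlip}. Your additional observation that the uniform two-sided bounds on $m$ and the bound on $Du$ confine the nonlinearity to a compact set where $H_0$ is smooth is precisely the point that makes the iteration close, and your treatment of mixed $t$--$x$ derivatives via the equations matches the intended argument.
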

\begin{proof}
The result follows by a simple bootstrapping argument. 
As a starting point, we use the
 regularity given by Theorem \ref{1minf2} and Lemmas \ref{uLrp}, \ref{ulip}, and \ref{mlip}. 
 Then, the the Theorem is proven by repeatedly using the parabolic regularity on the equations for $u$, $m$, and their derivatives.
\end{proof}

\section{Existence of solutions}
\label{exist}

To establish the existence of solutions, we will use the continuation method. For that, we introduce the problem
\begin{equation}
	\label{maineqlambda}
	\begin{cases}
		-u_t-\Delta u+m^{\alpha}H_{\lambda}\left(x, \frac{Du}{m^{\alpha}}\right)+b_\lambda\cdot Du=V_\lambda(x,m(x,t)),\\
		m_t-\Delta m-\div(D_pH_{\lambda}\left(x, \frac{Du}{m^{\alpha}}\right)m)-\div(b_\lambda m)=0,\\
		u(x, T)=\Psi_{\lambda}(x),\ m(x, 0)=m_{\lambda}(x), 
	\end{cases}
\end{equation}
where $0\leq \lambda\leq 1$, 
$H_{\lambda}(x,p)=(1-\lambda)H_0+\lambda (1+|p|^2)^{\frac {\gamma} 2}$, $b_{\lambda}=(1-\lambda)b$, $V_{\lambda}=(1-\lambda)V+\lambda \arctan(m)$, $\Psi_{\lambda}=(1-\lambda)\Psi, m_{\lambda}=(1-\lambda)m_0+\lambda$.
The terminal time $T$ satisfies
$T\in \mathcal{T}=[0,T_0]$, where $T_0$ is as in Theorem \ref{bootstrapp}.

When $\lambda=1$, \eqref{maineqlambda} has a unique solution, namely
$u\equiv (1-\frac{\pi} 4) t$, $m\equiv 1$. We will prove that the set
$\Lambda$ of values $0\leq \lambda\leq 1$ for which \eqref{maineqlambda} admits
a solution is relatively open and closed. Therefore, $\Lambda=[0,1]$ and, in 
particular, \eqref{main} admits a solution. 

For  $k\geq -1$, we set $F^{k}(\mathcal{T}; \td)=\cap_{2k_1+k_2= k} H^{k_1}(\mathcal{T}; H^{k_2}(\td))$, where the intersection is taken over all integers $k_1\geq 0, k_2\geq -1$. 
The space $F^{k}(\mathcal{T}; \td)$ is a Banach space endowed with the norm
\[
\|f\|_{F^{k}(\mathcal{T}; \td)}=\sum_{2k_1+k_2= k}\left\| f \right\|_{H^{k_1}(\mathcal{T}; H^{k_2}(\td))}. 
\]
Moreover, there exists $\tilde k_d$, depending only on the dimension $d$, such that for $k\geq \tilde k$, the space $F^{k-2}$ is an algebra. 
Let $k\geq \tilde k_d$, and
consider the operator 
\[
\mathcal{M}_{\lambda}\colon F^{k}(\mathcal{T}; \td)\times F^{k}(\mathcal{T}; \td)\to
F^{k-2}(\mathcal{T}; \td)\times F^{k-2}(\mathcal{T}; \td)\times H^{k-1}(\td)\times H^{k-1}(\td)
\]
given by
\[
\mathcal{M}_{\lambda}\left[\begin{array}{c}
u\\
m
\end{array}\right]=
\left[\begin{array}{c}
m_t-\Delta m-\div(D_pH_{\lambda}\left(x, \frac{Du}{m^{\alpha}}\right)m)+\div(b_{\lambda} m)\\
u_t+\Delta u-m^{\alpha}H_{\lambda}\left(x, \frac{Du}{m^{\alpha}}\right)-b_{\lambda}\cdot Du+V_{\lambda}(x, m)\\
m(x,0)-m_{\lambda}(x)\\
u(x,T)-\Psi_{\lambda}(x)
\end{array}\right].
\]
Then, \eqref{maineqlambda} is equivalent to 
\begin{equation}
\label{eqlalt}
\mathcal{M}_{\lambda}\left[\begin{array}{c}
u\\
m
\end{array}\right]=0, 
\end{equation}
and \eqref{main} then reads as $\mathcal{M}_0\left[\begin{array}{c}
u\\
m
\end{array}\right]=0$ . Moreover, as we remarked before, 
$\mathcal{M}_1\left[\begin{array}{c}
u\\
m
\end{array}\right]=0,$ has only the trivial solution $u\equiv (1-\frac{\pi} 4) t$, $m\equiv 1$.
We consider the linearized operator $\mathcal{L}$:
\[
\mathcal{L}_{\lambda}\left[\begin{array}{c}
v\\
f
\end{array}\right]=\lim_{\varepsilon\to 0}\frac{ \mathcal{M}_{\lambda}\left[\begin{array}{c}
	u+\varepsilon v\\
	m+\varepsilon f
	\end{array}\right] -\mathcal{M}_{\lambda}\left[\begin{array}{c}
	u\\
	m
	\end{array}\right]}{\varepsilon}=
\]
\vskip0.3cm
\[
\left[\begin{array}{c}
f_t-\Delta f-\div\left[D_pH_{\lambda}\left(x, \frac{Du}{m^{\alpha}}\right)f+m^{1-\alpha}D^2_{pp}H_{\lambda}\left(x, \frac{Du}{m^{\alpha}}\right)\cdot Dv
-\alpha f D^2_{pp}H_{\lambda}\left(x, \frac{Du}{m^{\alpha}}\right)\cdot  \frac{Du}{m^{\alpha}} +b_{\lambda}f\right]\\
\\
v_t+\Delta v-\alpha m^{\alpha-1}f \left( H_{\lambda}\left(x, \frac{Du}{m^{\alpha}}\right) -  \frac{Du}{m^{\alpha}} D_pH_{\lambda}\left(x, \frac{Du}{m^{\alpha}}\right)\right)-D_pH_{\lambda}\left(x, \frac{Du}{m^{\alpha}}\right)Dv -b_{\lambda}\cdot Dv+D_zV_{\lambda}f\\
\\
f(x,0)
\\
\\
v(x,T)
\end{array}\right].
\]
Note that $\mathcal{L}_{\lambda}:F^{k}(\mathcal{T}; \td)\times F^{k}(\mathcal{T}; \td)\to
F^{k-2}(\mathcal{T}; \td)\times F^{k-2}(\mathcal{T}; \td)\times H^{k-1}(\td)\times H^{k-1}(\td)$, for all $k$ large enough. However, if $u$ and $m$
are $C^\infty$ solutions to \eqref{maineqlambda}, then  $\mathcal{L}_{\lambda}$ admits
a unique extension as bounded linear operator
$\mathcal{L}_{\lambda}:F^{k}(\mathcal{T}; \td)\times F^{k}(\mathcal{T}; \td)\to
F^{k-2}(\mathcal{T}; \td)\times F^{k-2}(\mathcal{T}; \td)\times H^{k-1}(\td)\times H^{k-1}(\td)$, for all $k\geq 1$.

The form $\langle \cdot, \cdot \rangle$ denotes the scalar product on $L^2(\td)$. To apply the inverse function theorem, we need to prove that the linear operator  $\mathcal{L}_{\lambda}$ is invertible. For this, we 
begin by showing that the equation $\mathcal{L}_{\lambda} w= W$ has a unique weak solution in the sense of the following definition:
 \begin{df}
 	\label{weaksolu}
For $h,g \in L^2(0,T_0; L^2(\td)), A,B \in L^2(\td)$, set
 \[
W(x,t)=\left[\begin{array}{c}
h(x,t)\\
g(x,t)\\
A(x)
\\
B(x)
\end{array}\right].
\]
A function
$w=\left[\begin{array}{c}
v\\
f
\end{array}\right]$, with
\begin{equation}\label{weaksol}
v,f \in L^2(0,T_0; H^1(\td)) \text{ and } v_t,f_t\in L^2(0,T_0; H^{-1}(\td)), \text{ that is } v,f\in F^1(\mathcal{T}; \td),
\end{equation}
is a weak solution of  $\mathcal{L}_{\lambda} w= W$ if:
\begin{enumerate}
\item for any $\bar v, \bar f \in H^1(\td)$ and for a.e. $t, 0\leq t\leq T_0$  we have
\begin{equation}\label{weak}
\begin{cases}
\langle f_t, \bar f \rangle +\left\langle Df + D_pH_{\lambda}f+m^{1-\alpha}D^2_{pp}H_{\lambda}\cdot Dv
-\alpha f D^2_{pp}H_{\lambda}\cdot Q +f b_{\lambda}, D\bar{f} \right\rangle= \langle h , \bar f \rangle\\

\langle v_t, \bar v \rangle -\langle D v , D\bar v \rangle-\left\langle \alpha m^{\alpha-1}f \left( H_{\lambda} -Q\cdot D_pH_{\lambda}\right)+D_pH_{\lambda} \cdot Dv +b_{\lambda}\cdot Dv-D_zV_{\lambda}f,\bar{v} \right\rangle= \langle g , \bar v \rangle, \\
\end{cases}
\end{equation}
here $Q= \frac{Du}{m^{\alpha}}$ and the Hamiltonian $H_{\lambda}$ and its derivative are evaluated at the point $(x, Q).$  
\vskip0.3cm

\item $f(x,0)=A(x), v(x,T_0)=B(x)$.

\end{enumerate}
 \end{df}
 	
\begin{rem}
Note that \eqref{weaksol}  implies $v,f\in C(\mathcal{T}; L^2(\td))$( see e.g. \cite{E6}, Section 5.9.2, Theorem 3). Therefore, 
 the traces $f(x,0), v(x,T_0)$ are well-defined.
\end{rem}	
	 
\begin{teo}[Uniqueness of weak solutions]\label{weakuniq}
Let $(u_{\lambda},m_{\lambda})$ be a $C^\infty$ solution to \eqref{maineqlambda}, and let $T_0$ be as in Theorem \ref{bootstrapp}. Then, under Assumptions \ref{psi}--\ref{uniq3}, there exists at most one weak solution to the equation $\mathcal{L}_{\lambda}w=W$ in the sense of Definition \ref{weaksolu}.
\end{teo}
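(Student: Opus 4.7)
The plan is to show that if $\mathcal{L}_\lambda w = 0$ (i.e.\ $h=g=0$ and $A=B=0$), then $w=(v,f)\equiv 0$; by linearity this gives uniqueness. I would use a Lasry--Lions-type cross-testing argument adapted to the linearised system: substitute $\bar f = v$ into the first equation of \eqref{weak} and $\bar v = f$ into the second, and add them. The off-diagonal pieces $\langle Df, Dv\rangle$, $\langle D_pH_\lambda f, Dv\rangle$, $\langle f b_\lambda, Dv\rangle$ arising from the $f$-equation cancel against the corresponding pieces $-\langle Dv, Df\rangle$, $-\langle D_pH_\lambda\cdot Dv, f\rangle$, $-\langle b_\lambda\cdot Dv, f\rangle$ in the $v$-equation, while the two time-derivative pairings assemble into $\tfrac{d}{dt}\langle f,v\rangle$ in the distributional sense.

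Integrating in time over $[0,T_0]$, the identity $\int_0^{T_0}(\langle f_t,v\rangle + \langle v_t,f\rangle)\,dt = \langle f(T_0),v(T_0)\rangle - \langle f(0),v(0)\rangle$ (valid under \eqref{weaksol}), together with the boundary data $f(\cdot,0)=0$ and $v(\cdot,T_0)=0$, kills the boundary contributions and leaves
\[
\int_0^{T_0}\!\!\int_{\td}\Bigl[\, m^{1-\alpha}\,Dv^T D^2_{pp}H_\lambda\, Dv \,-\, \alpha f\, Q^T D^2_{pp}H_\lambda\, Dv \,+\, \alpha m^{\alpha-1} f^2\bigl(Q\cdot D_pH_\lambda - H_\lambda\bigr) \,+\, D_zV_\lambda\, f^2 \,\Bigr]\,dx\,dt = 0,
\]
where $Q = Du/m^\alpha$ and the derivatives of $H_\lambda$ are evaluated at $(x,Q)$. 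Completing the square in $Dv$ around $\tfrac{\alpha f m^{\alpha-1}}{2}Q$ rewrites the integrand as
\[
m^{1-\alpha}\bigl(Dv - \tfrac{\alpha fm^{\alpha-1}}{2}Q\bigr)^T D^2_{pp}H_\lambda \bigl(Dv - \tfrac{\alpha fm^{\alpha-1}}{2}Q\bigr) \;+\; \alpha m^{\alpha-1} f^2\bigl(Q\cdot D_pH_\lambda - H_\lambda - \tfrac{\alpha}{4}Q^T D^2_{pp}H_\lambda Q\bigr) \;+\; D_zV_\lambda f^2.
\]

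Each of the three summands is pointwise non-negative: the first by strict convexity of $H_\lambda$ (Assumption \ref{uniq1}) and the lower bound $m\geq 1/C$ furnished by Theorem \ref{1minf2}; the second by Assumption \ref{uniq2}; the third by Assumption \ref{uniq3}, with strict positivity of $D_zV_\lambda = (1-\lambda)D_zV + \lambda/(1+m^2)$ ensured by the $\arctan$ contribution for $\lambda>0$ and by strict monotonicity of $V$ at $\lambda=0$. Since the integral vanishes, each summand must vanish a.e.: the third forces $f\equiv 0$, the first (given $f=0$, $m>0$, and $D^2_{pp}H_\lambda>0$) forces $Dv\equiv 0$, and plugging $f=0$ and $Dv=0$ back into the homogeneous linearised equation for $v$ yields $v_t\equiv 0$, so $v$ is constant in $t$ and the terminal condition $v(\cdot,T_0)=0$ gives $v\equiv 0$.

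The main technical point I expect to have to handle carefully is that Assumption \ref{uniq2} is postulated only for $H_0$, whereas the continuation uses $H_\lambda = (1-\lambda)H_0 + \lambda(1+|p|^2)^{\gamma/2}$; a direct computation verifies the inequality for $(1+|p|^2)^{\gamma/2}$ under $\alpha<4/\gamma$ (the calculation cited in the excerpt), and since both strict convexity and the inequality in Assumption \ref{uniq2} are preserved under convex combinations, they propagate to $H_\lambda$ for all $\lambda\in[0,1]$. The remaining steps---the integration-by-parts-in-time identity at $F^1$ regularity, boundedness of the coefficients of $\mathcal{L}_\lambda$ (which follows from $u,m\in C^\infty$ together with $m\geq 1/C$ on $\td\times[0,T_0]$), and the trace theorem for $f$ and $v$ at $t=0,T_0$---are standard (e.g.\ Evans, Ch.~5.9.2).
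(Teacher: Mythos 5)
Your argument is correct and follows essentially the same route as the paper's proof: cross-testing with $\bar f = v$, $\bar v = f$, integrating in time so the boundary terms vanish, completing the square in $Dv$ (your form $m^{1-\alpha}\bigl(Dv-\tfrac{\alpha f m^{\alpha-1}}{2}Q\bigr)^{T}D^2_{pp}H_\lambda\bigl(Dv-\tfrac{\alpha f m^{\alpha-1}}{2}Q\bigr)$ is algebraically identical to the paper's $m^{\alpha-1}\bigl(m^{1-\alpha}Dv-\tfrac{\alpha}{2}fQ\bigr)^{t}D^2_{pp}H_\lambda\bigl(m^{1-\alpha}Dv-\tfrac{\alpha}{2}fQ\bigr)$), and then invoking Assumptions \ref{uniq1}--\ref{uniq3} together with the a priori bounds on $m$ and $Du$ to force $f=0$, $Dv=0$, and finally $v=0$ from the terminal condition. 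Your explicit check that Assumption \ref{uniq2} and strict convexity propagate to $H_\lambda$ under convex combination is a point the paper leaves implicit, but it does not change the substance of the argument.
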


\begin{proof}
Since the equation $\mathcal{L}_{\lambda}w=W$ is linear, it is enough to prove that $\mathcal{L}_{\lambda}w=0$ has only the trivial solution $w=0$. For this, we take $\bar f=v, \bar v= f$ in \eqref{weak}. Adding both 
equations and integrating in time, we obtain
\[\begin{split}
0=&\int_0^{T_0}\int_{\td}\Big [ \alpha m^{\alpha-1}f^2\left(Q\cdot D_pH_{\lambda}-H_{\lambda}\right)+m^{1-\alpha}  Dv\cdot D^2_{pp}H_{\lambda}\cdot  Dv\\&- \alpha f Q\cdot D^2_{pp}H_{\lambda}\cdot Dv+D_zV_{\lambda}f^2\Big ]  dx dt =
\\&
\int_0^{T_0}\int_{\td}\Big [\alpha m^{\alpha-1}f^2\left(Q\cdot D_pH_{\lambda}-H_{\lambda}-\frac{\alpha } {4}Q\cdot D^2_{pp}H_{\lambda}\cdot Q\right)\\&+m^{\alpha-1}\left(m^{1-\alpha}Dv-
\frac{\alpha} {2}f Q\right)^t\cdot D^2_{pp}H_{\lambda}\cdot\left(m^{1-\alpha}Dv-
\frac{\alpha} {2}f Q\right)+D_zV_{\lambda}f^2\Big ]  dx dt, 
\end{split}
\]
where we set $Q=\frac{Du}{m^{\alpha}}$. Using the estimates from Theorem \ref{1minf2}, Lemma \ref{ulip}, Remark \ref{rem4}, and Assumption \ref{uniq3}, we conclude that at a solution $(u_{\lambda},m_{\lambda})$ to \eqref{maineqlambda} there exist constants $\theta_1, \theta_2>0$ that do not depend on the solution and $\lambda$, such that the above expression bounded by below by
\[\begin{split}
\theta_1 \int_0^{T_0}\int_{\td}m^{\alpha-1}\left|m^{1-\alpha}Dv-
\frac{\alpha} {2}f Q\right|^2+\theta_2 |f|^2\, dx dt.
\end{split}
\]
Thus, we get  $f=0, Dv=0$.
Consequently $v\equiv v(t)$. Next, by looking at the second equation in \eqref{weak}, 
for $\bar v=v(t)$ and $\bar f=0$, we obtain
\[
\frac{d}{dt}\langle v, v \rangle=0. 
\]
Using the boundary conditions for $v$, we conclude that $v=0$. 
Therefore, $w=0$.
\end{proof}

 To prove the existence of weak solutions, we apply the Galerkin approximation method (see e.g. \cite{E6}). We consider  a sequence of $C^\infty$ functions $e_k=e_k(x),\ k\in \Nn$ such that $\{\ e_k\ \}_{k=1}^{\infty}$ is an orthogonal basis of  $H^1(\td)$ and an orthonormal basis of $L^2(\td)$. We construct a sequence of finite dimensional approximations
 to 
  weak solutions
  of \eqref{maineqlambda}
  as follows, let $v_N, f_N \colon [0,T_0]\to H^1(\td)$
 \[
 f_N(t)=\sum_{k=1}^N  A^k_N(t) e_k,\quad v_N(t)=\sum_{k=1}^N  B^k_N(t) e_k.
 \]
We will show that we can  select the coefficients $A^k_n, B^k_N$ so that 
\begin{equation}\label{Galerk}
\begin{cases}
\langle {f'_N}, e_k\rangle +\left\langle Df_N +f_N D_pH_{\lambda}+m^{1-\alpha}D^2_{pp}H_{\lambda}\cdot Dv_N
-\alpha f_N D^2_{pp}H_{\lambda}\cdot Q +f_N b_{\lambda}, De_k \right\rangle\\
\qquad = \langle h , e_k\rangle,\\\\

\langle {v'_N}, e_k \rangle -\langle D v _N, De_k \rangle \\
\qquad -\left\langle \alpha m^{\alpha-1}f_N \left( H_{\lambda} -Q\cdot D_pH_{\lambda}\right)+D_pH_{\lambda} \cdot Dv_N +b_{\lambda}\cdot Dv_N-D_zV_{\lambda}f_N, e_k\right\rangle= \langle g , e_k \rangle
\end{cases}
\end{equation}
and
\begin{equation}
\label{itbc}
A^k_N(0)=\langle A, e_k \rangle,\ B^k_N(T_0)=\langle B, e_k \rangle,\qquad k=1, 2, \ldots, N.
\end{equation}
The  system \eqref{Galerk} is equivalent to:
\begin{equation}\label{Galerkin2}
\begin{cases}
\dot A_N^k +\sum_{l=1}^N\left\langle De_l + e_l D_pH_{\lambda}
-\alpha e_l D^2_{pp}H_{\lambda}\cdot Q +e_l b_{\lambda}, De_k \right\rangle A_N^l\\\qquad +\sum_{l=1}^N\langle m^{1-\alpha}D^2_{pp}H_{\lambda}\cdot De_l,De_k\rangle B_N^l= \langle h , e_k\rangle,\\\\
\dot B_N^k -\sum_{l=1}^N \langle D e_l+D_pH_{\lambda} \cdot De_l +b_{\lambda}\cdot De_l, De_k \rangle B^l_N\\
\qquad - \sum_{l=1}^N \left\langle \alpha m^{\alpha-1}e_l \left( H_{\lambda} -Q\cdot D_pH_{\lambda}\right)-e_lD_zV_{\lambda}, e_k\right\rangle A^l_N= \langle g , e_k \rangle.
\end{cases}
\end{equation}

Because \eqref{Galerkin2} is a linear system of ordinary differential equations, 
the only difficulty in proving the existence of solutions 
concerns the boundary conditions \eqref{itbc}. Existence is not 
immediate because
half of the boundary conditions are given at
the initial time, whereas the other half are given at the terminal time. 
From  standard theory of ordinary differential equations, the initial value problem
for \eqref{Galerkin2}, that is, with $A^k_N(0)$ and $B^k_N(0)$ prescribed, 
has a unique solution.
Hence, 
to prove the existence of solutions to \eqref{Galerkin2}, it is enough to 
show the existence of solutions for the corresponding homogeneous problem:
\begin{equation}\label{Galerkin3}
\begin{cases}
\dot {\tilde A}_n^k +\sum_{l=1}^N\left\langle De_l + e_l D_pH_{\lambda}
-\alpha e_l D^2_{pp}H_{\lambda}\cdot Q +e_l b_{\lambda}, De_k \right\rangle \tilde A_N^l\\\qquad +\sum_{l=1}^N\langle m^{1-\alpha}D^2_{pp}H_{\lambda}\cdot De_l,De_k\rangle \tilde B_N^l= 0\\\\
\dot {\tilde B}_N^k -\sum_{l=1}^N \langle D e_l+D_pH_{\lambda} \cdot De_l +b_{\lambda}\cdot De_l, De_k \rangle \tilde B^l_N\\
\qquad - \sum_{l=1}^N \left\langle \alpha m^{\alpha-1}e_l \left( H_{\lambda} -Q\cdot D_pH_{\lambda}\right)-e_lD_zV_{\lambda}, e_k\right\rangle \tilde A^l_N=0,\\
\end{cases}
\end{equation}
with arbitrary $\tilde A^k_N(0)$ and $\tilde B^k_N(T_0)$, $1\leq k\leq N$. 
Indeed, any solution to \eqref{Galerkin2}-\eqref{itbc}, $(A,B)$
 can be written as a sum of a particular solution to \eqref{Galerkin2}, $(\bar A,\bar B)$,
 for instance with
 \[
\bar A^k_N(0)=0,\  \bar B^k_N(0)=0,\ k=1, 2, \ldots, N.
 \]
with a solution, $(\tilde A, \tilde B)$ to \eqref{Galerkin3} with suitable initial and terminal conditions so that \eqref{itbc} holds for $(A,B)=(\bar A+\tilde A, \bar B+\tilde B)$.  

Next, we regard the solution of the initial value problem for the homogeneous system corresponding to \eqref{Galerkin2} as a
linear operator on $\Rr^{2N}$:
\begin{equation}
\label{map}
(A_N(0), B_N(0))\mapsto (A_N(0), B_N(T_0)).
\end{equation}
We need to prove that this mapping is surjective. Since \eqref{map} is a linear mapping from $\Rr^{2N}$ to $\Rr^{2N}$, surjectivity is equivalent to injectivity. Therefore, it suffices to prove that the homogeneous system of ODE's corresponding to \eqref{Galerkin2} subject to initial-terminal conditions $A_N(0)=B_N(T_0)=0$ has only the trivial solution $A_N=B_N\equiv 0$.
Let $f_N, v_N$ solve \eqref{Galerk} with $h=g\equiv 0, A=B\equiv 0$. From \eqref{Galerk}, we obtain \eqref{weak} for $f=\bar v=f_N, v=\bar f= v_N$. Using the same argument as in  Theorem \ref{weakuniq}, we conclude that $f_N=v_N\equiv 0$.

Next, we prove energy estimates for these approximations to
ensure the weak convergence of approximate solutions through some subsequence. 
\begin{teo}
Suppose Assumptions \ref{psi}--\ref{uniq3} hold. Then, for $T_0$ small enough, there exists a constant $C$ such that
for any   $C^\infty$ solution $(u_{\lambda},m_{\lambda})$ to \eqref{maineqlambda}, we have
\[
\begin{split}
&\max_{0\leq t\leq T_0} \|(f_N,v_N)\|_{(L^2(\td))^2}+ \|(f_N,v_N)\|_{
	(L^2(0,T_0; H^1(\td)))^2}+ \|(f'_N,v'_N)\|_{(L^2(0,T_0; H^{-1}(\td)))^2}\\
&\qquad \leq C\left( \|h\|_{L^2(0,T_0; L^2(\td))}+\|g\|_{L^2(0,T_0; L^2(\td))}+ \|A\|_{L^2(\td)}+ \|B\|_{L^2(\td)}\right).
\end{split}
\]
\end{teo}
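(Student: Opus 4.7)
The plan is to combine three energy identities, uniformly in $N$ and $\lambda$: the antisymmetric cross identity underlying the proof of Theorem \ref{weakuniq}, together with a forward parabolic energy estimate for $f_N$ and a backward one for $v_N$. First, I test the first line of \eqref{Galerk} against $\bar f = v_N$ and the second line against $\bar v = f_N$ (equivalently, multiplying by $B_N^k$ or $A_N^k$ and summing over $k$), and add the resulting equations. The antisymmetric pairs involving $\langle Df_N, Dv_N\rangle$, $\langle f_N D_pH_\lambda, Dv_N\rangle$, and $\langle f_N b_\lambda, Dv_N\rangle$ cancel, while the time-derivative contributions assemble into $\tfrac{d}{dt}\langle f_N, v_N\rangle$. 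After integration over $[0, T_0]$, the result is precisely the identity exploited in Theorem \ref{weakuniq}, now with right-hand side $\int_0^{T_0}(\langle h, v_N\rangle + \langle g, f_N\rangle)\,dt + \langle f_N(0), v_N(0)\rangle - \langle f_N(T_0), v_N(T_0)\rangle$. By Assumptions \ref{uniq2}--\ref{uniq3} combined with the uniform pointwise bounds on $m_\lambda$, $1/m_\lambda$, and $Du_\lambda$ from Theorem \ref{1minf2} and Lemma \ref{ulip}, the quadratic form on the left is bounded below by $\theta_0 \int_0^{T_0}\!\int_{\td}(|Dv_N|^2 + f_N^2)\,dx\,dt$ for a constant $\theta_0 > 0$ independent of $N$ and $\lambda$.

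Next, I test the first equation of \eqref{Galerk} against $\bar f = f_N$ and apply Young's inequality to every cross term, using the uniform coefficient bounds; Grönwall forward in time yields an estimate of the form $\|f_N(t)\|_{L^2(\td)}^2 + \int_0^t\|Df_N\|_{L^2(\td)}^2\,ds \leq C\|A\|_{L^2(\td)}^2 + C\int_0^t(\|Dv_N\|_{L^2(\td)}^2 + \|h\|_{L^2(\td)}^2)\,ds$. Testing the second equation against $\bar v = v_N$, using its backward parabolic structure, and applying backward Grönwall produces the symmetric estimate $\|v_N(t)\|_{L^2(\td)}^2 + \int_t^{T_0}\|Dv_N\|_{L^2(\td)}^2\,ds \leq C\|B\|_{L^2(\td)}^2 + C\int_t^{T_0}(\|f_N\|_{L^2(\td)}^2 + \|g\|_{L^2(\td)}^2)\,ds$, where the constants $C$ depend on $T_0$ only through $e^{CT_0}$.

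Finally, the two one-sided estimates from the second step bound the boundary quantities $\|f_N(T_0)\|_{L^2(\td)}^2$ and $\|v_N(0)\|_{L^2(\td)}^2$ appearing in the cross identity by $C(\|A\|_{L^2(\td)}^2 + \|B\|_{L^2(\td)}^2 + \|h\|_{L^2}^2 + \|g\|_{L^2}^2) + C\int_0^{T_0}\!\int_{\td}(|Dv_N|^2 + f_N^2)\,dx\,dt$; moreover, $\|v_N\|_{L^2(0, T_0; L^2(\td))}^2 \leq T_0\|v_N\|_{L^\infty(0,T_0; L^2(\td))}^2$ combined with the backward Grönwall bound controls the $\int\langle h, v_N\rangle$ contribution on the right-hand side of the cross identity (and similarly for $\int\langle g, f_N\rangle$). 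Choosing $T_0$ small enough and the Young parameters $\epsilon$ small enough, all terms proportional to $\int_0^{T_0}\!\int_{\td}(|Dv_N|^2 + f_N^2)\,dx\,dt$ appearing on the right can be absorbed into the $\theta_0$ coercive term on the left, producing a clean bound on $\int_0^{T_0}\!\int_{\td}(|Dv_N|^2 + f_N^2)\,dx\,dt$ by the data. Substituting back into the Grönwall bounds then gives the $L^\infty_t L^2_x$ and $L^2_t H^1_x$ estimates on both $f_N$ and $v_N$. The $L^2_t H^{-1}_x$ bounds on $f'_N$ and $v'_N$ follow directly by reading off the time derivative in \eqref{Galerk} and using the already-established $L^2_t L^2_x$ control on the spatial terms. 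The main obstacle is the forward-backward coupling, which makes Grönwall applied to a single equation circular; the resolution relies on the strict coercivity from Assumptions \ref{uniq2}--\ref{uniq3} (which hold along the entire homotopy in $\lambda$) and on the smallness of $T_0$ to close the absorption step.
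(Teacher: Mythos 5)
Your proposal is correct, but it takes a genuinely different route from the paper. The paper never invokes the Lasry--Lions monotonicity structure in this energy estimate: it only tests the first Galerkin equation against $f_N$ and the second against $v_N$, obtaining the two diagonal differential inequalities
$\left(\|f_N\|^2_{L^2}\right)_t+\|Df_N\|^2_{L^2}\leq C(\|h\|^2+\|Dv_N\|^2+\|f_N\|^2)$ and
$\left(\|v_N\|^2_{L^2}\right)_t-\|Dv_N\|^2_{L^2}\geq -C(\|g\|^2+\|v_N\|^2+\|f_N\|^2)$;
it then runs Gr\"onwall backward for $v_N$ (which yields both $\sup_t\|v_N\|_{L^2}$ and $\int_0^{T_0}\|Dv_N\|^2$ in terms of $\|B\|^2$, $\|g\|^2$ and $\int\|f_N\|^2$), feeds that into the forward Gr\"onwall for $f_N$, and closes by absorbing $C\int_0^{T_0}\|f_N\|^2\,ds\leq CT_0\sup_t\|f_N\|^2$ using the smallness of $T_0$. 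Your argument instead adds a third ingredient, the cross-testing $\bar f=v_N$, $\bar v=f_N$, which reproduces the coercive quadratic form from Theorem \ref{weakuniq} and uses Assumptions \ref{uniq1}--\ref{uniq3} together with the uniform bounds on $m$, $1/m$, $Q$ to extract $\theta_0\int(|Dv_N|^2+f_N^2)$; you then use the two one-sided Gr\"onwall bounds only to control the boundary pairings $\langle P_NA,v_N(0)\rangle$, $\langle f_N(T_0),P_NB\rangle$ and the source pairings, and absorb via small Young parameters. Both arguments are sound. Yours buys a coercivity mechanism that is essentially independent of the size of $T_0$ (smallness enters only through the a priori bounds of Theorem \ref{bootstrapp}) and makes transparent why the same structural hypotheses drive both uniqueness and the energy estimate; the paper's version buys brevity and shows that, for the estimate itself, no monotonicity is needed -- boundedness of the coefficients plus a short time horizon suffices. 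One small point to keep in mind if you write this up: the lower bound $\theta_0\int(|Dv_N|^2+f_N^2)$ requires splitting off a small multiple of the positive-definite term $m^{\alpha-1}\left(m^{1-\alpha}Dv_N-\tfrac{\alpha}{2}f_NQ\right)^tD^2_{pp}H_\lambda\left(m^{1-\alpha}Dv_N-\tfrac{\alpha}{2}f_NQ\right)$ so that the resulting $-Cf_N^2$ error is dominated by the $D_zV_\lambda f_N^2$ contribution; this uses the boundedness of $Q$ and the uniform convexity of $H_\lambda$ on the compact range of $Q$, exactly as in the paper's Theorem \ref{weakuniq}.
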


\begin{proof}
 We assume $T_0$ is small enough so that Theorem \ref{bootstrapp} holds. Using the linearity of \eqref{Galerk}, we observe that \eqref{weak} holds
for $f=\bar f=f_N, v_N=\bar v_N=v_N$. Then, using H\"older's inequality and the estimates from Theorem \ref{bootstrapp}, we obtain
the system of inequalities:
\begin{equation}\label{ineq}
\begin{cases}
\left(  \|f_N\|^2_{L^2(\td)} \right)_t+\|Df_N\|^2_{L^2(\td)}\leq C\left( \|h\|^2_{L^2(\td)} 
+ \|Dv_N\|^2_{L^2(\td)}  + \|f_N\|^2_{L^2(\td)} \right),\\
\left(  \|v_N\|^2_{L^2(\td)} \right)_t-\|Dv_N\|^2_{L^2(\td)}\geq -C\left( \|g\|^2_{L^2(\td)} + \|v_N\|^2_{L^2(\td)} + \|f_N\|^2_{L^2(\td)} \right).
\end{cases}
\end{equation}
From the second inequality, using Gronwall's inequality, we get 
\[
\|v_N(\cdot, t)\|^2_{L^2(\td)} \leq C \int_t^{T_0} \left( \|g(\cdot, s)\|^2_{L^2(\td)} + \|f_N(\cdot, s)\|^2_{L^2(\td)} \right)ds + C \|B\|^2_{L^2(\td)},
\]
and further
\[
\int_0^{T_0}\|Dv_N(\cdot, s)\|^2_{L^2(\td)}ds \leq C \int_0^{T_0} \left( \|g(\cdot, s)\|^2_{L^2(\td)} + \|f_N(\cdot, s)\|^2_{L^2(\td)} \right)ds + C \|B\|^2_{L^2(\td)}.
\]

From this, combined with the first inequality in \eqref{ineq}, and using Gronwall's inequality once more,we have
\[
\|f_N(\cdot, t)\|^2_{L^2(\td)} \leq C \int_0^{T_0} \left( \|g(\cdot, s)\|^2_{L^2(\td)} + \|h(\cdot, s)\|^2_{L^2(\td)} + \|f_N(\cdot, s)\|^2_{L^2(\td)} \right)ds + C\left( \|A\|^2_{L^2(\td)}+\|B\|^2_{L^2(\td)}\right). 
\]
Thus, for $T_0$ small enough, we get
\[
\sup_{t\in[0,T_0]}\|f_N(\cdot, t)\|^2_{L^2(\td)} \leq C \int_0^{T_0} \left( \|g(\cdot, s)\|^2_{L^2(\td)} + \|h(\cdot, s)\|^2_{L^2(\td)} \right)ds + C\left( \|A\|^2_{L^2(\td)}+\|B\|^2_{L^2(\td)}\right). 
\]
Consequently,
\[
\int_0^{T_0}\|Df_N(\cdot, t)\|^2_{L^2(\td)} \leq C \int_0^{T_0} \left( \|g(\cdot, s)\|^2_{L^2(\td)} + \|h(\cdot, s)\|^2_{L^2(\td)} \right)ds + C\left( \|A\|^2_{L^2(\td)}+\|B\|^2_{L^2(\td)}\right).
\]
Thus, we have
\[
\begin{split}
&\max_{0\leq t\leq T_0} \|(f_N,v_N)\|_{(L^2(\td))^2}+ \|(f_N,v_N)\|_{(L^2(0,T_0; H^1(\td)))}\\
&\leq C\left( \|h\|_{L^2(0,T_0; L^2(\td))}+\|g\|_{L^2(0,T_0; L^2(\td))}+ \|A\|_{L^2(\td)}+ \|B\|_{L^2(\td)}\right).
\end{split}
\]
From equation \eqref{weak}, for any $\bar f, \bar v\in span \{e_k\}_{k=1}^N$
with $\|\bar f\|_{L^2(0,T_0; H^1(\td))}\leq 1$, $\|\bar v\|_{L^2(0,T_0; H^1(\td))}\leq 1$ we get
\[
\begin{cases}
\int_0^{T_0} \langle f_N(s), \bar f\rangle ds\leq C\left( \|h\|^2_{L^2(0,T_0; L^2(\td))}+\|g\|^2_{L^2(0,T_0; L^2(\td))}+ \|A\|^2_{L^2(\td)}+ \|B\|^2_{L^2(\td)}\right),\\
\int_0^{T_0} \langle v_N(s), \bar v\rangle ds\leq C\left( \|h\|^2_{L^2(0,T_0; L^2(\td))}+\|g\|^2_{L^2(0,T_0; L^2(\td))}+ \|A\|^2_{L^2(\td)}+ \|B\|^2_{L^2(\td)}\right),
\end{cases}
\]
since $f_N, v_N\in span \{e_k\}_{k=1}^N$ as well. These inequalities imply the required estimates.
\end{proof}

\begin{teo}[Existence of weak solutions]
\label{t2}
Let $(u_{\lambda},m_{\lambda})$ be a $C^\infty$ solution to  \eqref{maineqlambda} and let $T_0$ be as in Theorem \ref{bootstrapp}. Then, under Assumptions \ref{psi}--\ref{uniq3}, there exists a weak solution to the equation $\mathcal{L}_{\lambda}w=W$ in the sense of \eqref{weak}.
\end{teo}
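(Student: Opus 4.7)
The plan is to finish the Galerkin scheme set up above by extracting a weak limit of the finite-dimensional approximations $(f_N, v_N)$ and checking that this limit satisfies the weak formulation \eqref{weak} together with the prescribed initial and terminal data. Because the linearized operator $\mathcal{L}_\lambda$ is linear in the unknowns $(v,f)$ and all of its coefficients depend only on the smooth background solution $(u_\lambda, m_\lambda)$, weak convergence of the approximations will suffice to pass to the limit in every term; no strong compactness of $(f_N, v_N)$ is needed.

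First I would invoke the energy estimate from the previous theorem to deduce that $(f_N, v_N)$ is uniformly bounded in $L^2(0,T_0; H^1(\td))^2$ and that $(f'_N, v'_N)$ is uniformly bounded in $L^2(0,T_0; H^{-1}(\td))^2$. Banach--Alaoglu then yields a subsequence (not relabeled) with $f_N \rightharpoonup f$, $v_N \rightharpoonup v$ weakly in $L^2(0,T_0; H^1(\td))$ and $f'_N \rightharpoonup f'$, $v'_N \rightharpoonup v'$ weakly in $L^2(0,T_0; H^{-1}(\td))$. In particular $(f,v) \in F^1(\mathcal{T};\td) \times F^1(\mathcal{T};\td)$ as required by \eqref{weaksol}, and the classical embedding $L^2(0,T_0; H^1(\td)) \cap H^1(0,T_0; H^{-1}(\td)) \hookrightarrow C([0,T_0]; L^2(\td))$ guarantees that the traces $f(\cdot,0)$ and $v(\cdot,T_0)$ are well-defined in $L^2(\td)$.

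Next I would pass to the limit in \eqref{Galerk}. For any fixed index $k$ and every $N \geq k$, the Galerkin identities tested against $e_k$ are linear forms in $(f_N, Df_N, v_N, Dv_N)$ whose coefficients---namely $D_p H_\lambda(x,Q)$, $D^2_{pp}H_\lambda(x,Q)$, $m^{1-\alpha}$, $m^{\alpha-1}$, $H_\lambda(x,Q)$, $b_\lambda$, and $D_z V_\lambda$ with $Q = Du_\lambda/m_\lambda^{\alpha}$---are smooth and bounded on $\td \times [0,T_0]$ thanks to Theorems \ref{1minf2} and \ref{bootstrapp}. Each such linear form is therefore weakly continuous, so one obtains \eqref{weak} with $\bar f = \bar v = e_k$ in the sense of distributions in time. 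By linearity this extends to every finite combination of basis elements; by density of the linear span of $\{e_k\}$ in $H^1(\td)$, the identities \eqref{weak} hold for every $\bar f, \bar v \in H^1(\td)$ and a.e. $t \in [0,T_0]$.

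Finally, the initial-terminal data are verified as follows. From \eqref{itbc} one has $\langle f_N(\cdot,0), e_k\rangle = \langle A, e_k\rangle$ and $\langle v_N(\cdot,T_0), e_k\rangle = \langle B, e_k\rangle$ for every $k \leq N$. Using the $C([0,T_0]; L^2(\td))$-embedding together with a standard time-integration by parts against smooth test functions supported near $t=0$ (respectively near $t=T_0$) identifies $f(\cdot,0)=A$ and $v(\cdot,T_0)=B$ in $L^2(\td)$. The main subtlety of the argument is the fact that the linearized problem prescribes data at opposite ends of the time interval, which forces the two traces to be treated independently and explains why existence for \eqref{Galerkin2}--\eqref{itbc} had to be reduced, above, to injectivity of the map \eqref{map}; once this splitting is in place, the remainder of the convergence argument is routine, since linearity of $\mathcal{L}_\lambda$ removes any need for strong compactness of $(f_N, v_N)$.
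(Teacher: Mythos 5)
Your proposal is correct and follows essentially the same route as the paper: extract weakly convergent subsequences from the energy estimates, pass to the limit in the Galerkin identities using linearity of $\mathcal{L}_{\lambda}$ and the boundedness of the coefficients (so no strong compactness is needed), conclude by density of $\cup_N \operatorname{span}\{e_k\}_{k=1}^N$ in $H^1(\td)$, and recover the initial-terminal data via the embedding into $C([0,T_0];L^2(\td))$. Your treatment of the traces is slightly more detailed than the paper's, but the argument is the same.
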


\begin{proof}
According to the energy estimates, there exist subsequences of $v_{N}, f_N$ and functions $v,f \in L^2(0,T_0; H^1(\td)),$ with  $v'=v_t, f'=f_t \in L^2(0,T_0; H^{-1}(\td)),$ such that
\[
\begin{cases}
v_N \rightharpoonup v,\ f_N \rightharpoonup f, \text{ weakly in  }\  L^2(0,T_0; H^1(\td))\\
v'_N \rightharpoonup v',\ f'_N \rightharpoonup f', \text{ weakly in  }\  L^2(0,T_0; H^{-1}(\td)).
\end{cases}
\]
For fixed $N_0$, let $\bar v, \bar f \in span \{e_k\}_{k=1}^{N_0}$ with $\|\bar v\|_{L^2(0,T_0; H^1(\td))},\| \bar f \|_{L^2(0,T_0; H^1(\td))}\leq 1$. According to the definition of $v_N, f_N$, we have that \eqref{weak} holds for every $N\geq N_0$. Weak convergence then implies \eqref{weak} for $v, f$ and any $\bar v, \bar f \in span \{e_k\}_{k=1}^{N_0}$. The above convergence implies that $v_N \rightharpoonup v,\ f_N \rightharpoonup f$ also in $C(0,T_0; L^{2}(\td))$.
Therefore, the the initial and terminal conditions on $f,v$ hold as well. Since $\cup_{N\geq 1} span \{e_k\}_{k=1}^{N} $ is dense in $L^2(0,T_0; H^1(\td))$, we are done.
\end{proof}

\begin{teo}[Higher Regularity]
\label{t3}
Let $(u_{\lambda},m_{\lambda})$ be a $C^\infty$ solution of \eqref{maineqlambda} and let $T_0$ be as in Theorem \ref{bootstrapp}. Assume $A,B \in H^{k+1}(\td), h,g \in F^ {2k}(\mathcal{T}; \td)$ and let $W=[h, g, A, B]^t$. Then, under Assumptions \ref{psi}--\ref{uniq3}, for any weak solution $w=[f, v]^t$ of  $\mathcal{L}_{\lambda}w=W$, we have $v,f \in F^ {2k+2}(\mathcal{T}; \td)$. 
\end{teo}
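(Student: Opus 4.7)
The proof proceeds by induction on $k$, exploiting the parabolic structure of the linearized system $\mathcal{L}_\lambda$ together with the $C^\infty$-smoothness of its coefficients (which is guaranteed by Theorem \ref{bootstrapp}, since $(u_\lambda, m_\lambda)$ is $C^\infty$). The key structural observation is that the coupling between the two components of $w=(f,v)$ is asymmetric: the backward-parabolic equation for $v$ involves only $f$, $v$, $Dv$, whereas the forward-parabolic equation for $f$ involves $f$, $Df$, $Dv$, and $D^2v$ (obtained by expanding the divergence). This allows a two-stage bootstrap at every regularity level: first $v$, then $f$.

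For the base case $k=0$, the weak-solution construction (Theorem \ref{t2}) gives $w\in (F^1(\mathcal{T};\td))^2$, and we have $A,B\in H^1(\td)$, $h,g\in L^2(\mathcal{T};L^2(\td))$. The equation for $v$ can be written as
\[
-v_t-\Delta v=\Psi(x,t;v,Dv,f)+g,
\]
with $\Psi$ linear in $(v,Dv,f)$ with $C^\infty$ coefficients, so the right-hand side lies in $L^2(\mathcal{T};L^2(\td))$. Combined with the terminal condition $v(\cdot,T_0)=B\in H^1(\td)$, standard $L^2$-regularity for the backward heat equation gives $v\in L^2(\mathcal{T};H^2(\td))\cap H^1(\mathcal{T};L^2(\td))=F^2(\mathcal{T};\td)$. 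Next, the equation for $f$ reads
\[
f_t-\Delta f=\Phi(x,t;f,Df,Dv,D^2v)+h,
\]
with $\Phi$ linear and with $C^\infty$ coefficients; the newly acquired regularity $v\in F^2$ makes the right-hand side belong to $L^2(\mathcal{T};L^2(\td))$, whence, together with $A\in H^1(\td)$ and forward parabolic regularity, $f\in F^2(\mathcal{T};\td)$.

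For the inductive step, assume the statement holds up to level $k$. Given $A,B\in H^{k+2}(\td)$ and $h,g\in F^{2k+2}(\mathcal{T};\td)$, differentiate the system $\mathcal{L}_\lambda w=W$ once in space: the differentiated system has the same principal part, the commutators with the smooth coefficients of $\mathcal{L}_\lambda$ produce terms that are linear in $w$ and its first derivatives, and the new source belongs to $F^{2k}(\mathcal{T};\td)$. The inductive hypothesis, applied component-by-component (again $v$ first, then $f$), upgrades the spatial regularity of $w$ by one derivative; the corresponding time regularity comes from the equations themselves (each time derivative can be exchanged for two spatial derivatives via the PDE). Iterating all the admissible pairs $(k_1,k_2)$ with $2k_1+k_2=2k+4$ yields $w\in (F^{2k+4}(\mathcal{T};\td))^2$, closing the induction.

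The principal technical obstacle is controlling the divergence term $\operatorname{div}[m^{1-\alpha}D^2_{pp}H_\lambda\cdot Dv]$, which couples second derivatives of $v$ into $f$'s equation; this is exactly why $v$ must be bootstrapped before $f$. The other potential difficulty — compatibility of the boundary data at each regularity level — does not actually arise in this linearized problem because $A$ and $B$ live at disjoint time slices and the conditions on $w$ imposed by $\mathcal{L}_\lambda w=W$ are prescribed separately at $t=0$ and $t=T_0$. With the asymmetric coupling handled, every step of the bootstrap is a routine application of $L^2$-parabolic regularity, so the induction goes through without further complications.
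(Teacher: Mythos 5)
Your proof is correct and follows essentially the same route as the paper: a two-stage parabolic bootstrap that regularizes $v$ first (its equation sees only $f$, $v$, $Dv$) and then $f$ (whose source contains $D^2v$), iterated over regularity levels with the time regularity recovered from the equations themselves. The only organizational difference is that the paper runs the induction by applying its heat-equation regularity lemma (Lemma \ref{L6}) twice per level at increasing indices $(k_1,k_2)$ with $2k_1+k_2=2i$, rather than by differentiating the system in space and re-invoking the inductive hypothesis; the two bookkeeping schemes are equivalent.
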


The proof draws on the regularizing properties of the heat equation and a bootstrap argument. We use the following result:
\begin{lemma}
	\label{L6}
Let $\tilde h \in H^{k_1}(\mathcal{T}; H^{k_2}(\td))$, $\tilde g \in H^{2k_1+k_2+1} (\td)$ for some $k_1, k_2\geq 0$, and let $\tilde u\in F^{1}(\mathcal{T}; \td)$ be a weak solution of the heat equation
\[
\begin{cases}
\tilde u_t-\Delta \tilde u=\tilde h \\
\tilde u(x,0)=\tilde g(x).
\end{cases}
\]
Then $\tilde u\in H^{k_1}(\mathcal{T}; H^{k_2+2}(\td))\cap H^{k_1+1}(\mathcal{T}; H^{k_2}(\td)).$
\end{lemma}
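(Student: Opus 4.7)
The plan is to prove the lemma by induction on $k_1\ge 0$, with $k_2$ arbitrary at each stage, combining $L^2$-parabolic regularity for the base case with a bootstrap obtained by differentiating the equation in $t$.

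For the base case $k_1=0$ the hypothesis reads $\tilde h\in L^2(\mathcal{T}; H^{k_2}(\td))$ and $\tilde g\in H^{k_2+1}(\td)$, and we want $\tilde u\in L^2(\mathcal{T}; H^{k_2+2}(\td))\cap H^1(\mathcal{T}; H^{k_2}(\td))$. Since we are on the torus, I would obtain this by Fourier series in $x$: each mode $\hat{\tilde u}(\xi, t)$ is given explicitly by Duhamel's formula, and the required $L^2$-in-time estimates with weight $(1+|\xi|^2)^{k_2+2}$ follow from Young's inequality applied to the exponential kernel $e^{-|\xi|^2(t-s)}$, using $\tilde g\in H^{k_2+1}$ and $\tilde h\in L^2(H^{k_2})$. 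Equivalently, this is the standard maximal $L^2$-regularity for the heat equation; it can also be obtained by the Galerkin scheme already employed in the paper, testing against $(-\Delta)^{k_2+1}\tilde u$ and $\tilde u_t$.

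For the inductive step I would split $\tilde u = v + w$ where $v_t=\Delta v$ with $v(\cdot,0)=\tilde g$, and $w_t=\Delta w+\tilde h$ with $w(\cdot,0)=0$. For $v$, Fourier analysis yields $\partial_t^j v = \Delta^j e^{t\Delta}\tilde g$, and the smoothing estimate $\|e^{t\Delta}\tilde g\|_{L^2(\mathcal{T};H^{s+1})}\le C\|\tilde g\|_{H^s}$ applied with $s=2k_1+k_2+1-2j$ gives $\partial_t^j v\in L^2(\mathcal{T}; H^{2k_1+k_2+2-2j}(\td))$ for every relevant $j$; choosing $j=k_1$ and $j=k_1+1$ yields the regularity of $v$ required by the lemma. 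For $w$, the function $\partial_t^{k_1} w$ formally solves the heat equation with right-hand side $\partial_t^{k_1}\tilde h\in L^2(\mathcal{T}; H^{k_2}(\td))$ and initial datum
\[
\partial_t^{k_1} w(\cdot, 0) = \sum_{i=0}^{k_1-1} \Delta^{k_1-1-i}\, \partial_t^i \tilde h(\cdot, 0),
\]
so, provided the traces $\partial_t^i \tilde h(\cdot,0)$ lie in the appropriate Sobolev spaces, the base case applied to $\partial_t^{k_1} w$ closes the induction.

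The main obstacle is precisely this trace compatibility: the bare hypothesis $\tilde h\in H^{k_1}(\mathcal{T}; H^{k_2}(\td))$ together with 1-D Sobolev embedding only delivers $\partial_t^i \tilde h(\cdot,0)\in H^{k_2}(\td)$, which is one derivative short of the $H^{2(k_1-1-i)+k_2+1}(\td)$ required by the base case. This gap is harmless in the way the lemma is actually applied in the preceding theorem, because there $\tilde h$ belongs to the full parabolic space $F^{2k}(\mathcal{T};\td) = \bigcap_{2j_1+j_2=2k} H^{j_1}(\mathcal{T}; H^{j_2}(\td))$, and the Bochner--Sobolev trace theorem for anisotropic Sobolev spaces then supplies $\partial_t^i \tilde h(\cdot,0)\in H^{2k-2i-1}(\td)$, which is exactly what the induction needs. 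A routine approximation of $\tilde h$ and $\tilde g$ by smooth data makes the formal differentiation in $t$ rigorous and completes the proof.
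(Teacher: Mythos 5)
Your overall strategy coincides with the paper's, which disposes of this lemma in two sentences (``proved easily using induction; the base case is standard heat regularity''); you have simply supplied the induction that the authors omit, and your base case (maximal $L^2$-regularity via Fourier series on the torus, or equivalently Galerkin testing against $(-\Delta)^{k_2+1}\tilde u$ and $\tilde u_t$) and your decomposition of the inductive step into a homogeneous part $e^{t\Delta}\tilde g$ and a Duhamel part are both correct, as is your formula for the trace $\partial_t^{k_1}w(\cdot,0)$. The genuinely valuable part of your write-up is the compatibility issue you flag, and you should be aware that it is not merely a gap in the naive induction but a defect of the lemma as stated: for $k_1\geq 1$ the hypotheses $\tilde h\in H^{k_1}(\mathcal{T};H^{k_2}(\td))$, $\tilde g\in H^{2k_1+k_2+1}(\td)$ do not imply the conclusion. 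For instance, with $k_1=1$, $k_2=0$, $\tilde g=0$ and $\tilde h(x,t)=\eta(t)\phi(x)$ with $\eta\in H^1(0,T)$, $\eta(0)=1$ and $\phi\in L^2(\td)\setminus H^1(\td)$, the mode-by-mode computation you outline shows $\partial_t^2\tilde u$ contains the term $-|\xi|^2e^{-|\xi|^2t}\hat\phi(\xi)$, whose $L^2$-in-time norm behaves like $|\xi|\,|\hat\phi(\xi)|$ and is not square-summable; hence $\tilde u\notin H^2(\mathcal{T};L^2(\td))$. So the induction cannot close from the stated hypotheses alone, exactly because $\partial_t^i\tilde h(\cdot,0)$ is only in $H^{k_2}(\td)$ rather than the $H^{2(k_1-1-i)+k_2+1}(\td)$ your base case requires (a deficit of $2(k_1-1-i)+1$ derivatives, not always one, but your point stands). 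Your repair is the right one: in Theorem \ref{t3} the lemma is only ever invoked with right-hand sides lying in the full anisotropic scale $F^{j}(\mathcal{T};\td)$ simultaneously for all admissible splittings $2k_1+k_2=j$, and the parabolic trace theorem then furnishes the missing regularity of $\partial_t^i\tilde h(\cdot,0)$. A cleaner fix would be to restate the lemma with $\tilde h\in F^{2k_1+k_2}(\mathcal{T};\td)$ (or to add the compatibility conditions explicitly); as written, the paper's one-line proof silently assumes this.
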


\begin{proof}
The Lemma is proved easily using induction. The base case $k_1=k_2=0$ is a standard regularity result for the heat equation.
\end{proof}

From the second equation of \eqref{weak}, we have that $v$ is a weak solution to
\begin{equation}
\label{efv}
\begin{cases}
v_t+\Delta v=g
\\
\quad +\alpha m^{\alpha-1}f \left( H_{\lambda}\left(x, \frac{Du}{m^{\alpha}}\right) + \frac{Du}{m^{\alpha}} D_pH_{\lambda}\left(x, \frac{Du}{m^{\alpha}}\right)\right)+D_pH_{\lambda}\left(x, \frac{Du}{m^{\alpha}}\right)Dv+b_{\lambda}\cdot Dv+D_zV_{\lambda}f\\\\
v(x,T_0)=B(x).
\end{cases}
\end{equation}
Since the right-hand side of the previous PDE belongs to $L^2(0,T_0, L^2(\td))$, using Lemma \ref{L6}, we conclude that 
 $v \in L^2(\mathcal{T}; H^{2}(\td))\cap H^{1}(\mathcal{T}; L^2(\td))$. 

Next, the first equation of \eqref{weak} implies that $f$ is a weak solution to
\begin{equation}
\label{eff}
\begin{cases}
f_t-\Delta f=h \\
\quad +\div\left[D_pH_{\lambda}\left(x, \frac{Du}{m^{\alpha}}\right)f
-\alpha f D^2_{pp}H_{\lambda}\left(x, \frac{Du}{m^{\alpha}}\right)\cdot  \frac{Du}{m^{\alpha}}+m^{1-\alpha}D^2_{pp}H_{\lambda}\left(x, \frac{Du}{m^{\alpha}}\right)\cdot Dv +b_{\lambda}f\right] \\\\
f(x,0)=A(x).
\end{cases}
\end{equation}
From the regularity of $v$ obtained above, we conclude that the right-hand side of this equation is also in $L^2(0,T_0, L^2(\td))$. For that reason, according to Lemma \ref{L6}, $f  \in L^2(\mathcal{T}; H^{2}(\td))\cap H^{1}(\mathcal{T}; L^2(\td))$.
	
Now, we assume $v,f\in F^ {2i}(\mathcal{T}; \td)$ for some $i\leq k$, we will prove that  $v,f \in F^ {2i+2}(\mathcal{T}; \td)$.
First, note that since $v,f\in H^{k_1}(\mathcal{T}; H^{k_2}(\td))$ for every $k_1, k_2$ with $2k_1+k_2= 2i$, the expression on the right-hand side of \eqref{efv} is in $H^{k_1}(\mathcal{T}; H^{k_2-1}(\td))$. Thus, using Lemma \ref{L6},
we get $v\in H^{k_1}(\mathcal{T}; H^{k_2+1}(\td))$. We know now that
the right-hand side of \eqref{efv} is in $H^{k_1}(\mathcal{T}; H^{k_2}(\td))$.	
Using Lemma \ref{L6} the second time, we conclude that $v\in H^{k_1}(\mathcal{T}; H^{k_2+2}(\td))\cap H^{k_1+1}(\mathcal{T}; H^{k_2}(\td))$.

Now, we have that the right-hand side of \eqref{eff} is in  $H^{k_1}(\mathcal{T}; H^{k_2-1}(\td))$. Thus, using Lemma \ref{L6} again twice as above, we get $f\in H^{k_1}(\mathcal{T}; H^{k_2+2}(\td))\cap H^{k_1+1}(\mathcal{T}; H^{k_2}(\td))$. From what we have proved, it follows $v,f\in H^{\tilde k_1}(\mathcal{T}; H^{\tilde k_2}(\td))$, for every $\tilde k_1,\tilde k_2$ with $2\tilde k_1+\tilde k_2= 2i+2$. Consequently,
 $v,f \in F^ {i+2}(\mathcal{T}; \td)$.


\begin{proof}[Proof of the Theorem \ref{main}]

Theorem \ref{bootstrapp} and Arzela-Ascoli Theorem imply that the set $\Lambda$ is a closed subset of the interval $[0,1]$. We will prove that it is also open. Let $\lambda_0\in \Lambda$.  Using Theorem \ref{bootstrapp}, we see that the operator 
\[
\mathcal{L}_{\lambda_0}\colon F^{2k}(\mathcal{T}; \td)\times F^{2k}(\mathcal{T}; \td)\to
F^{2k-2}(\mathcal{T}; \td)\times F^{2k-2}(\mathcal{T}; \td)\times H^{2k-1}(\td)\times H^{2k-1}(\td)
\]
is bounded for every $k\geq 1$. Using Theorems \ref{weakuniq},  \ref{t2}, and \ref{t3}, we conclude that $\mathcal{L}_{\lambda_0}$ is bijective, and so it is invertible. 
We choose $k$ large enough so that $H^l(\mathcal{T}; H^l(\td))$, where $l=\lfloor \frac {2k} 3 \rfloor$, is an algebra. By the inverse function theorem (\cite{D1}), there is a neighborhood $U$ of $\lambda_0$ where the equation $\mathcal{M}_{\lambda}\left[\begin{array}{c}
u\\
m
\end{array}\right]=0$ has a unique solution $(u_{\lambda}, m_{\lambda})$ in $F^{2k}(\mathcal{T}; \td)\times F^{2k}(\mathcal{T}; \td)$. Then, $u_{\lambda}, m_{\lambda}\in H^l(\mathcal{T}; H^l(\td))$. The inverse function theorem implies that the mapping $\lambda\mapsto(u_{\lambda}, m_{\lambda})$ is continuous. Hence, we can assume that in the neighborhood $U$, $m_{\lambda}$ is bounded away from zero. This observation, together with the fact that $H^l(\mathcal{T}; H^l(\td))$ is an algebra allows us to use regularity theory and bootstrap arguments to conclude that $(u_{\lambda}, m_{\lambda})$ are $C^\infty$. Accordingly, $U\subset \Lambda$.  Consequently, we have 
proved that $\Lambda$ is an open set in $[0,1]$. Because $1\in \Lambda$, we
know that $\Lambda\neq\emptyset$. Therefore, $\Lambda=[0,1]$. In particular, $0\in \Lambda$. \end{proof}

\bibliographystyle{plain}

\bibliography{mfg}

\end{document}